\long\def\unmarkedfootnote#1{{\long\def\@makefntext##1{##1}\footnotetext{#1}}}
\theoremstyle{plain}
\newtheorem{thm}{Theorem}[section]
\newtheorem{lemma}[thm]{Lemma}
\newtheorem{corollary}[thm]{Corollary}
\newtheorem{prop}[thm]{Proposition}
\newtoks\prt
\newtheorem{proclaim}[thm]{\the\prt}
\theoremstyle{definition}
\newtheorem{remark}[thm]{Remark}
\newtheorem{definition}[thm]{Definition}
\def\eqn#1$$#2$${\begin{equation}\label#1#2\end{equation}}
\numberwithin{equation}{section}
\def\adj{\operatorname{adj}}
\newcommand{\cL}{{\mathcal L}}
\newcommand{\cC}{{\mathcal C}}
\def\dist{\operatorname{dist}}
\def\epsilon{\varepsilon}
\def\en{\mathbb N}
\def\er{\mathbb R}
\def\f{\tilde{f}}
\def\haus{\mathcal{H}}
\def\id{\operatorname{id}}
\def\loc{\operatorname{loc}}
\def\mir1{\mathcal L_1}
\newcommand{\mV}{{\mathbb V}}
\newcommand{\mR}{{\mathbb R}}
\newcommand{\mv}{{\mathbf v}}
\def\rn{\mathbb R^n}
\def\x{\widetilde{x}}
\def\y{\widetilde{y}}
\def\ve{\boldsymbol v}
\newcommand{\labeltext}[2]{%
	\@bsphack
	\def\@currentlabel{#1}{\label{#2}}%
	\@esphack
}
\def\step#1#2#3{\par \noindent{{\vskip 5pt \textit{Step}~\labeltext{#1}{#3}#1. }{\textit{#2.}}}}
\newtoks\by
\newtoks\paper
\newtoks\book
\newtoks\jour
\newtoks\yr
\newtoks\pages
\newtoks\vol
\newtoks\publ
\def\ota{{\hbox\vol{???}}}
\def\cLear{\by=\ota\paper=\ota\book=\ota\jour=\ota\yr=\ota
\pages=\ota\vol=\ota\publ=\ota}
\def\endpaper{\the\by, {\the\paper},
\textit{\the\jour} \textbf{\the\vol} (\the\yr), \the\pages.\cLear}
\def\endbook{\the\by, \textit{\the\book}, \the\publ.\cLear}
\def\endprep{\the\by, \textit{\the\paper}, \the\jour.\cLear}
\def\endyearprep{\the\by, \textit{\the\paper}, \the\jour, (\the\yr).\cLear}
\def\name#1#2{#2 #1}
\def\nom{ \rm no. }
\definecolor{ffqqqq}{rgb}{1.,0.,0.}
\definecolor{qqffqq}{rgb}{0.,1.,0.}
\definecolor{qqqqff}{rgb}{0.,0.,1.}
\definecolor{xfqqff}{rgb}{0.4980392156862745,0.,1.}
\definecolor{ffxfqq}{rgb}{1.,0.4980392156862745,0.}
\title{Mission $p<n-1$: Possible\\
 -- Nonlinear Elasticity Beyond Conventional Limits}
\author[D. Campbell]{Daniel Campbell}
\address{Department of Mathematical Analysis, Charles University, So\-ko\-lovsk\'a 83, 186~00 Prague 8, Czech Republic}
\email{\tt campbell@karlin.mff.cuni.cz}
\author[A. Dole\v{z}alov\'a]{Anna Dole\v{z}alov\'a}
\address{Department of Decision-Making Theory, Institute of Information Theory and Automation, Czech Academy of Sciences, Pod Vod\'arenskou
v\v{e}\v{z}\'i 4, 182~00 Prague 8, Czech Republic}
\email{dolezalova@utia.cas.cz}
\author[S. Hencl]{Stanislav Hencl}
\address{Department of Mathematical Analysis, Charles University,
So\-ko\-lovsk\'a 83, 186~00 Prague 8, Czech Republic}
\email{\tt hencl@karlin.mff.cuni.cz}
\thanks{D.C. and S.H. were supported by the grant GA\v{C}R P201/24-10505S. D.C. was also supported by the Ministry of Education, Youth and Sport of
	the Czech Republic grant number LL2105 CONTACT. A. D. was supported by the grant GA\v{C}R P202/23-04766S and by the Czech Academy of Sciences project PPLZ L100752451.}
\date{\today}
\begin{document}

\begin{abstract}
    {In this paper we prove the lower semicontinuity of a Neohookean-type energy for a model of Nonlinear Elasticity that allows, for the first time, for $p<n-1$. Our class of admissible deformations consists of weak limits of Sobolev $W^{1,p}$ homeomorphisms. We also introduce a model that allows for cavitations by studying weak limits of homeomorphisms that can open cavities at some points. 
		In this model we add the measure of the created surface to the energy functional and for this functional we again prove lower semicontinuity.}
\end{abstract}

\maketitle

\section{Introduction}

In this paper, we study classes of mappings that might serve as classes of deformations in Nonlinear Elasticity models. 
Let $\Omega\subseteq \rn$ be a bounded domain, i.e., a non-empty connected open set,
and let $f\colon\Omega\to\rn$ be a mapping with $J_f>0$ a.e. The natural physical deformation is the minimizer of the corresponding energy functional, typically of the form
$$
\int_{\Omega}W(Df(x))\; dx.
$$ 
We study the existence of such deformations using the direct method in the Calculus of Variations. As usual we study mappings that do not change orientation, i.e., $J_f>0$ a.e. We assume that the energy potential penalizes both large dilation,
$$
W(Df(x))\to\infty\text{ as }|Df(x)|\to \infty,
$$
and massive compression,
$$
W(Df(x))\to\infty\text{ as }J_f(x)\to 0+. 
$$

Following the pioneering paper of Ball \cite{Ball} we assume that our $W$ is polyconvex and that our map $f$ belongs to the Sobolev space $W^{1,p}(\Omega,\rn)$, i.e., that $W(Df)\geq C |Df|^p$ for some $p\in [1,\infty)$. We also need to impose some sort of injectivity a.e. on $f$, because of the physical requirement of ``the noninterpenetration of matter''.  There are several ways to achieve that. We can follow the ideas of Ball \cite{Ball2} and add some special terms to the energy functional based on the integrability of gradient minors (\cite{Ball}, \cite{Ball2}, \cite{FG}, \cite{Sv}, \cite{MTY}) or the distortion (\cite{Re}, \cite{IM}, \cite{HK}) and we obtain that the minimizing deformation (assuming nice homeomorphic boundary conditions) is actually a homeomorphism. Another way is to impose the additional so-called Ciarlet-Ne\v{c}as condition \cite{CN} which implies injectivity a.e. Both of these conditions require us to assume that $p\geq n$.

The condition $p>n$ is too restrictive, however, since in this context eligible deformations must necessarily be continuous, but some real physical deformations exhibit cavitations or even fractures. To model cavities for $W^{1,p}$-mappings, $p>n-1$, we can use the $(INV)$ condition which was introduced by M\"uller and Spector \cite{MS} (see also e.g.\  \cite{BHMC,HMC, HeMo11, MST}). Informally speaking, the $(INV)$ condition means that the ball $B(x,r)$ is mapped inside the image of the sphere $f(S(a,r))$ and the complement $\Omega\setminus \overline{B(x,r)}$ is mapped outside $f(S(a,r))$ (see the preliminaries for the formal definition).  From \cite{MS} we know that mappings in this class with $J_f>0$ a.e.\ are one-to-one a.e.\ and that this class is weakly closed and thus suitable for the direct method of the Calculus of Variations.

It is even possible to approach the limiting case $p=n-1$ and to define some version of the $(INV)$ condition there as shown by Conti and De Lellis \cite{CDL} 
(see also \cite{BHMCR}, \cite{BHMCR2}, \cite{DHM} and \cite{DHMo} for some recent work) and mappings in this class with $J_f>0$ a.e.\ are one-to-one a.e. too.  However, there is no way how to extend this notion of invertibility for $p<n-1$ as such maps are far from being continuous on $S(a,r)$. Therefore, for such maps we cannot define a degree and the notion of ``inside $f(S(a,r))$'' does not make sense.

The celebrated counterexample of Mal\'y \cite{M} gives us an example of diffeomorphisms $f_k$ that converge weakly to identity in $W^{1,p}$, $p<n-1$, but 
$\lim_{k\to \infty}\int J_{f_k}=0<\int J_{\operatorname{id}}$, i.e. this simple functional is not lower semicontinuous. The lower semicontinuity of functionals below the natural $W^{1,n}$ energy has attracted a lot of attention in the past and we refer the reader e.g.~to Ball and Murat 
\cite{BM}, Mal\'y \cite{M}, Dal Maso and Sbordone \cite{DMS} and  Celada and Dal Maso \cite{CM} for further information. However, in all of the positive results one has to assume that $p\geq n-1$.

In our paper we study the lower semicontinuity of a functional with $p$ below the threshold of $n-1$ for $n\geq 2$ and we are not aware of any positive results in this direction. As in \cite{CDL} we study Neohookean functionals of the type
\eqn{EnergyDef}
$$
E(f):=\int_{\Omega}\left(|Df(x)|^p+\varphi(J_f(x))\right)\; dx,
$$
where we assume the following natural growth assumptions 
\begin{equation}\label{varphi}
	\varphi \text{ is a positive convex function on }(0,\infty)\text{ with }
	\lim_{t\to 0^+}\varphi(t)=\infty,\ 
\end{equation}
and
\begin{equation}\label{varphi2}
	\lim_{t\to \infty}\frac{\varphi(t)}{t}=\infty.
\end{equation}

Our assumptions about the energy functional are quite minimal, but we have to assume something about the class of deformations to guarantee some sort of injectivity. Our class of deformations should contain homeomorphisms of finite energy and it has to be weakly closed so that we can apply the direct method of the Calculus of Variations. Hence, we suggest using the class of weak limits of Sobolev homeomorphism as our class of deformations. We also assume that our homeomorphisms satisfy the Lusin $(N)$ condition, i.e. that for any $E\subseteq \Omega$ with $|E|=0$ we have $|f(E)|=0$, as it is natural to assume that ``new material cannot be created from nothing''. This class was used also in \cite{DHM} and \cite{DHMo} for $p=n-1$, but there we needed to assume much more about the functional to obtain lower semicontinuity and the proofs there were quite technical. For any $p>\lfloor n/2\rfloor$ by modifying a recent result of Bouchala, Hencl and Zhu \cite{BHZ} we obtain that mappings in our class are injective a.e. which turns out to be crucial for us (see Theorem \ref{modBHZ}).

Given a fixed homeomorphism $f_0:\overline{\Omega}\to\rn$ satisfying the Lusin $(N)$ condition with $E(f_0)<\infty$ and $|f_0(\partial\Omega)|=0$ we denote 
$$
\begin{aligned}
\mathcal{H}_{f_0}^{1,p}=\bigl\{f\in W^{1,p}(\Omega,\rn):\ &f:\overline{\Omega}\to\rn\text{ is a homeomorphism, } f =f_0\text{ on }\partial\Omega\\
&\text{ and }f\text{ satisfies the Lusin }(N)\text{ condition}\bigr\}. \\
\end{aligned}
$$
Given $C>E(f_0)$ we define the class of weak limits
$$
\begin{aligned}
\overline{\mathcal{H}_{f_0}^{1,p}}^w=\bigl\{f:\Omega\to\rn:&\text{ there are }f_k\in \mathcal{H}_{f_0}^{1,p}\text{ with } E(f_k) \leq C\\
&\text{ so that }f_k\rightharpoonup f\text{ weakly in }W^{1,p}(\Omega, \rn)\bigr\}.\\
\end{aligned}
$$
Our main result is the following weak lower semicontinuity result with $p$ below the usual threshold of $n-1$.

\begin{thm}\label{LSCClosure}
Let $n\geq 2$, $p>\lfloor\frac{n}{2}\rfloor$, let $\Omega\subseteq \rn$ be a bounded domain and let $f_0 \in W^{1,p}(\Omega,\rn)$ be a homeomorphism from $\overline{\Omega}$ into $\rn$ which satisfies the Lusin $(N)$ condition, $E(f_0)<\infty$ and $|f_0(\partial\Omega)|=0$.  Let $f_k \in \overline{\mathcal{H}_{f_0}^{1,p}}^w$ satisfy $f_k\rightharpoonup f$ weakly in $W^{1,p}(\Omega,\rn)$, then $f\in \overline{\mathcal{H}_{f_0}^{1,p}}^w$ and 
\begin{equation}\label{LSCForClosure}
	E(f) \leq \liminf_{k\to\infty} E(f_k).
\end{equation}
It follows that $E$ attains its minimum on $\overline{\mathcal{H}_{f_0}^{1,p}}^w$.
\end{thm}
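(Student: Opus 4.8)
The plan is to prove a sharpened version of the statement for sequences of genuine homeomorphisms and then bootstrap to the closure by a diagonal argument. The energy $E$ bounds $\|Df_k\|_{L^p}$, and together with the boundary datum $f_0$ and the Poincar\'e inequality it also bounds $\|f_k\|_{L^p}$, so every map occurring lies in a fixed ball of $W^{1,p}(\Omega,\rn)$; on such a ball the weak topology is metrizable, say by a metric $d$. Given $f_k\rightharpoonup f$ with $f_k\in\overline{\mathcal H_{f_0}^{1,p}}^w$, choose $g_k\in\mathcal H_{f_0}^{1,p}$ with $E(g_k)\le C$ and $d(g_k,f_k)<1/k$; then $g_k\rightharpoonup f$, which already gives $f\in\overline{\mathcal H_{f_0}^{1,p}}^w$. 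It is therefore enough to prove: if $g_k\in\mathcal H_{f_0}^{1,p}$ and $g_k\rightharpoonup f$, then $E(f)\le\liminf_k E(g_k)$, and in addition $f$ is injective a.e., satisfies the Lusin $(N)$ condition, has $J_f\ge0$ a.e., and $|f(\Omega)|=|f_0(\Omega)|$. The four structural properties make $\overline{\mathcal H_{f_0}^{1,p}}^w\cap\{E<\infty\}$ stable under the very same argument (apply the sharpened statement once to an approximating sequence of each $f_k$), so that the lower semicontinuity inequality upgrades from sequences of homeomorphisms to sequences in $\overline{\mathcal H_{f_0}^{1,p}}^w$; the case $\liminf_k E(f_k)=\infty$ is trivial.

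\emph{The two terms.} After passing to a subsequence we may assume $E(g_k)$ converges, and, by Rellich's theorem, $g_k\to f$ in $L^p$ and a.e. Weak lower semicontinuity of $h\mapsto\int_\Omega|Dh|^p$ is classical (the integrand is convex in $Dh$), so $\int_\Omega|Df|^p\le\liminf_k\int_\Omega|Dg_k|^p$. For the second term, $E(g_k)\le C$ and \eqref{varphi2} make $\{J_{g_k}\}$ equi-integrable and $L^1$-bounded (de la Vall\'ee--Poussin), so $J_{g_k}\rightharpoonup J$ in $L^1(\Omega)$ along a further subsequence, with $J\ge0$; convexity and positivity of $\varphi$ give $\int_\Omega\varphi(J)\le\liminf_k\int_\Omega\varphi(J_{g_k})$. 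Since $\liminf a_k+\liminf b_k\le\liminf(a_k+b_k)$, it remains only to prove $J=J_f$ a.e.

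\emph{Identification $J=J_f$ — the crux.} Each $g_k$ is a homeomorphism with $J_{g_k}>0$ a.e.\ satisfying $(N)$, so $\int_A J_{g_k}\,dx=|g_k(A)|$ for Borel $A\subseteq\Omega$ by the area formula; since $g_k=f_0$ on $\partial\Omega$, both are orientation-preserving homeomorphisms of $\overline\Omega$ and $|f_0(\partial\Omega)|=0$, the topological degree (which depends only on boundary values) forces $\chi_{g_k(\Omega)}=\chi_{f_0(\Omega)}$ a.e., hence $g_k(\Omega)=f_0(\Omega)=:\Omega'$. For $\psi\in C_c(\rn)$ the change of variables gives $\int_\Omega\psi(g_k)\,J_{g_k}\,dx=\int_{\Omega'}\psi\,dy$, and the left-hand side tends to $\int_\Omega\psi(f)\,J\,dx$ because $\psi(g_k)\to\psi(f)$ a.e.\ and is uniformly bounded while $J_{g_k}\rightharpoonup J$ in $L^1$; hence the pushforward $f_{\#}(J\,dx)$ equals Lebesgue measure on $\Omega'$, and in particular $\int_\Omega J=|\Omega'|$. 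Granting that $f$ is injective a.e.\ (Theorem~\ref{modBHZ}), satisfies $(N)$, and has $J_f\ge0$ a.e., the area formula for $f$ yields $|f(A)|=\int_A J_f$, while the pushforward identity yields $\int_A J\le\int_{f^{-1}(f(A))}J=|f(A)\cap\Omega'|\le|f(A)|$ for every Borel $A\subseteq\Omega$; thus $J\le J_f$ a.e. Finally $f(x)=\lim_k g_k(x)\in\overline{\Omega'}$ for a.e.\ $x$ and $|\partial\Omega'|=0$, so $\int_\Omega J_f=|f(\Omega)|\le|\Omega'|=\int_\Omega J$; combined with $J\le J_f$ this forces $J=J_f$ a.e., and hence $E(f)\le\liminf_k E(g_k)$. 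Existence of a minimizer then follows by the direct method: a minimizing sequence in $\overline{\mathcal H_{f_0}^{1,p}}^w$ is bounded in $W^{1,p}$ (coercivity of the $|Df|^p$-term plus the boundary condition), and its weak limit lies in $\overline{\mathcal H_{f_0}^{1,p}}^w$ and attains the infimum by \eqref{LSCForClosure}.

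\emph{Main obstacle.} The whole difficulty is concentrated in the identification $J=J_f$. For $p<n-1$ the limit $f$ need not be continuous and carries no degree on spheres, so the classical machinery for weak limits of Jacobians is unavailable, and one is forced to use the a.e.\ injectivity of Theorem~\ref{modBHZ} — which is exactly why the hypothesis $p>\lfloor n/2\rfloor$ appears — together with the change-of-variables/pushforward computation above. Within this, the most delicate point is to establish the auxiliary structural properties of the limit that the computation relies on, namely that $f$ still satisfies the Lusin $(N)$ condition and that $J_f\ge0$ a.e.; these are precisely the properties that typically fail for weak limits of low-regularity Sobolev homeomorphisms, and here they must be recovered from the coercivity encoded in $\varphi$ (the blow-up \eqref{varphi} at $0$ and the superlinearity \eqref{varphi2} at $\infty$) together with the uniform energy bound.
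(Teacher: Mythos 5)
Your reduction to sequences of genuine homeomorphisms via metrizability of the weak topology on bounded sets, and the treatment of the two energy terms (weak $L^1$ convergence of the Jacobians plus convexity), are fine and parallel the paper's diagonal argument. The genuine gap is in the identification $J=J_f$: your chain of inequalities uses the area formula in the form $|f(A)|=\int_A J_f$, which presupposes that the weak limit $f$ satisfies the Lusin $(N)$ condition. This is not a property you can ``recover from the coercivity encoded in $\varphi$'': Theorem \ref{example} of the paper exhibits a strong $W^{1,n-1}$-limit of homeomorphisms, with \eqref{odkaz} holding for a $\varphi$ satisfying \eqref{varphi} and \eqref{varphi2}, which fails $(N)$. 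Since the theorem must hold for every admissible $\varphi$, your step 4 cannot be completed as written. Without $(N)$ one only has $\int_A J_f=|f(A\setminus N)|\le |f(A)|$ for a suitable null set $N$ (pointwise-convergence, area-formula and injectivity exceptional sets), and then your pushforward estimate $\int_A J\le |f(A)\cap\Omega'|$ no longer compares with $\int_A J_f$ in the needed direction, so the conclusion $J\le J_f$ a.e.\ is lost; the reverse inequality alone does not suffice. The same unproved $(N)$ property is also built into your ``sharpened statement'' used for the bootstrap, so the bootstrap as formulated inherits the gap (although the diagonal reduction itself can be salvaged, as in the paper, by producing a single sequence of homeomorphisms converging weakly to $f$).

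For comparison, the paper's proof of the homeomorphism case (Theorem \ref{LSConHomThm}) is designed precisely to avoid $(N)$ for the limit: after obtaining $J$ as the weak $L^1$ limit of $\det Df_k$ and using a.e.\ injectivity (Theorem \ref{modBHZ}) together with the area formula off a null set, it works throughout with $|f(B\setminus N)|=\int_B\det Df$ and proves $|f_k(B)|\to|f(B\setminus N)|$ for every ball $B$ by two contradiction arguments based on the uniform measure bounds of Lemma \ref{l:reverse} (sets of measure at least $\delta$ are mapped by $f_k$ to sets of measure at least $\Phi(\delta)$, and conversely via $\Psi$). Your change-of-variables/pushforward identity $f_{\#}(J\,dx)=\mathcal L^n|_{\Omega'}$ is a genuinely different and attractive idea (and the limit passage $\int\psi(g_k)J_{g_k}\to\int\psi(f)J$ is correct, by equiintegrability plus Egorov), but to turn it into $J=J_f$ you would still need a substitute for the missing $(N)$ condition — essentially the content of the paper's Steps 3--4 — so as written the key step fails.
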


The main reason why the counterexample of Mal\'y \cite{M} does not contradict our result is the fact that the uniform boundedness of $E(f_k)$ in \eqref{EnergyDef} implies that small sets are mapped to small sets uniformly in $k$ (see Lemma \ref{l:reverse}  below) which is not the case for the diffeomorphisms $f_k$ in the counterexample in \cite{M}. 

The class of weak limits of Sobolev homeomorphisms was recently characterized in the planar case by Iwaniec and Onninen \cite{IO,IO2} and De Philippis and Pratelli \cite{DPP} (see also \cite{Ca}). The situation in higher dimension is much more difficult and we do not have a full characterization yet, but we know that mappings in this class do not change orientation \cite{HO} and are one-to-one a.e. \cite{BHZ} for $p$ big enough. We know that for $p>n-1$ a weak limit of Sobolev homeomorphisms satisfies the $(INV)$ condition as the $(INV)$ class is weakly closed. On the other hand already for $n=2$ there are mappings that satisfy the $(INV)$ condition but are not a weak limit of Sobolev homeomorphisms (see \cite{DPP}) and it is not clear if we want to have these mapping in our class of eligible deformations or not.

The motivation for models of Nonlinear Elasticity for $p>n-1$ introduced by M\"uller and Spector in \cite{MS} was partially to incorporate models that allow for cavitations. Unfortunately cavities cannot be created by weak limits of homeomorphisms when controlling the energy from \eqref{EnergyDef} as the sequence of limiting maps would necessarily violate the uniform measure estimate \eqref{reverse2} (see Lemma \ref{l:reverse} below) close to the point where the cavity is opened. We introduce a class of homeomorphisms that can open finitely many cavities and we study their weak limits (that can have countably many cavities). It was known already to Ball and Murat \cite[Counterexample 7.4]{BM} that weak lower semicontinuity may fail in this context if we do not assume something about the cavities. As usual in this theory (see e.g. \cite{CDL}, \cite{HeMo11}, \cite{HMC}) we assume that the total measure of the created surface is finite and we add the corresponding term to the energy functional. We show that the lower semicontinuity result holds also in this case for this new functional.

Let us denote a set of connected compact sets with positive measure and boundary of finite measure as
$$
\mathcal{K}(f_0(\Omega)):=\{K\subseteq f_0(\Omega):\ K\text{ is connected compact with } |K|>0 \text{ and }\haus^{n-1}(\partial K)<\infty\}. 
$$
Given a fixed homeomorphism $f_0:\overline{\Omega}\to\rn$ satisfying the Lusin $(N)$ condition with $E(f_0)<\infty$ and $|f_0(\partial\Omega)|=0$ we denote homeomorphisms which can have finitely many cavities as 
$$
\begin{aligned}
\mathcal{H}_{c,f_0}^{1,p}=\bigl\{f&\in W^{1,p}(\Omega,\rn):\ \text{there are }x_1,\hdots,x_m\in\Omega\text{ and disjoint }K_1,\hdots K_m\in\mathcal{K}(f_0(\Omega))\\
&\text{ so that }f:\overline{\Omega}\setminus\{x_1,\hdots,x_m\}\to\overline{f(\Omega)}\setminus(K_1\cup\hdots\cup K_m)\text{ is a homeomorphism},\\
&f =f_0\text{ on }\partial\Omega\text{ and }f\text{ satisfies the Lusin }(N)\text{ condition}\bigr\}. \\
\end{aligned}
$$

Note that if $n\geq 3$, $\Omega$ and $\Omega\setminus\{x_1,\dots,x_m\}$ have the same fundamental group. As $f$ is a homeomorphism on  $\Omega\setminus\{x_1,\dots,x_m\}$, the set $f(\Omega\setminus\{x_1,\dots,x_m\})$ has the same fundamental group. This imposes further conditions on $K_i$ (e.g. it cannot be a loop). In particular, for every two loops in $\Omega\setminus\{x_1,\dots,x_m\}$ which can be continuously deformed to each other, their images in $f(\Omega\setminus\{x_1,\dots,x_m\})$ can also be continuously deformed to each other.

Given $f\in \mathcal{H}_{c,f_0}^{1,p}$ we define its set of cavities as 
$$
A(f):=\bigcup_{i=1}^m K_i
$$
and we use an energy functional 
$$
E_c(f):=\int_{\Omega}\left(|Df(x)|^p+\varphi(J_f(x))\right)\; dx+a P(A(f),\mathbb{R}^n)
$$
for some $a>0$ where $P$ denotes the perimeter of the set (see Preliminaries). It is well known that $P(A,\mathbb{R}^n)=|D\chi_A|(\mathbb{R}^n)=\haus^{n-1}(\partial^* A)$ where $\partial^* A$ is the reduced 
boundary of $A$ (see e.g. \cite[Theorem 3.59]{AFP}) so the last term measures well the area of the created surface. 

Given $C>E(f_0)$ we define the class of weak limits
$$
\begin{aligned}
\overline{\mathcal{H}_{c,f_0}^{1,p}}^w=\bigl\{f:\Omega\to\rn:&\text{ there are }f_k\in \mathcal{H}_{c,f_0}^{1,p}\text{ with } E_c(f_k) \leq C\\
&\text{ so that }f_k\rightharpoonup f\text{ weakly in }W^{1,p}(\Omega, \rn)\bigr\}.\\
\end{aligned}
$$
Given $f\in \overline{\mathcal{H}_{c,f_0}^{1,p}}^w$ and $f_k\in\mathcal{H}_{c,f_0}^{1,p}$ that converge weakly to $f$ one can show that from the corresponding cavities $\chi_{A(f_k)}$ we can pick a subsequence which converges weak* in $BV$ to $\chi_A$ and this $A$ does not depend on the subsequence (see Theorem \ref{t:cavities_same} below), i.e., we can define $A(f):=A$ and functional $E_c$ is now defined also for $f$. 

\begin{thm}\label{LSCClosureCavity}
Let $n\geq 2$, $p>\lfloor\frac{n}{2}\rfloor$, let $\Omega\subseteq \rn$ be a bounded domain and let $f_0 \in W^{1,p}(\Omega,\rn)$ be a homeomorphism from $\overline{\Omega}$ into $\rn$ which satisfies the Lusin $(N)$ condition, $E_c(f_0)<\infty$ and $|f_0(\partial\Omega)|=0$.  Let $f_k \in \overline{\mathcal{H}_{c,f_0}^{1,p}}^w$ satisfy $f_k\rightharpoonup f$ in $W^{1,p}(\Omega,\rn)$, then $f\in \overline{\mathcal{H}_{c,f_0}^{1,p}}^w$ and 
\begin{equation}\label{LSCForClosureCavity}
	E_c(f) \leq \liminf_{k\to\infty} E_c(f_k).
\end{equation}
It follows that $E_c$ attains its minimum on $\overline{\mathcal{H}_{c,f_0}^{1,p}}^w$.
\end{thm}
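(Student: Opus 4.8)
The plan is to prove Theorem~\ref{LSCClosureCavity}; Theorem~\ref{LSCClosure} is the special case obtained by deleting every reference to the cavity sets and to the perimeter term below.

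\emph{Step 1: reduction to homeomorphisms with finitely many cavities.} Every competitor with $E_c\le C$ is bounded in $W^{1,p}(\Omega,\rn)$ (its gradient by the energy bound, the map itself by the Poincar\'e inequality and the fixed boundary datum $f_0$), so all of them lie in one fixed ball of the reflexive separable space $W^{1,p}(\Omega,\rn)$, on which the weak topology is metrizable; fix a metric $d$ inducing it there. I claim it is enough to prove \eqref{LSCForClosureCavity} when $f_k\in\mathcal{H}_{c,f_0}^{1,p}$. Indeed, for general $f_k\in\overline{\mathcal{H}_{c,f_0}^{1,p}}^w$ with $f_k\rightharpoonup f$, pass to a subsequence so that $\lim_k E_c(f_k)=\liminf_k E_c(f_k)$; choose $g_{k,j}\in\mathcal{H}_{c,f_0}^{1,p}$ with $E_c(g_{k,j})\le C$ and $g_{k,j}\rightharpoonup f_k$ as $j\to\infty$; the claimed inequality applied to $(g_{k,j})_j$ gives $E_c(f_k)\le\liminf_j E_c(g_{k,j})$, so we may pick $j(k)$ with $d(g_{k,j(k)},f_k)<1/k$ and $E_c(g_{k,j(k)})\le E_c(f_k)+1/k$; then $g_{k,j(k)}\rightharpoonup f$ with $E_c(g_{k,j(k)})\le C$, so $f\in\overline{\mathcal{H}_{c,f_0}^{1,p}}^w$, and the claimed inequality applied to $(g_{k,j(k)})_k$ yields $E_c(f)\le\liminf_k E_c(g_{k,j(k)})\le\lim_k E_c(f_k)$. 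From now on $f_k\in\mathcal{H}_{c,f_0}^{1,p}$, $E_c(f_k)\le C$, $f_k\rightharpoonup f$, and after a further subsequence we assume $E_c(f_k)$ converges to its original limit inferior, $f_k\to f$ a.e.\ in $\Omega$ (Rellich--Kondrachov), $\chi_{A(f_k)}$ converges weakly-$*$ in $BV(\rn)$ to $\chi_{A(f)}$ (Theorem~\ref{t:cavities_same}, the needed $BV$-compactness coming from $P(A(f_k),\rn)\le C/a$ and $|A(f_k)|\le|f_0(\Omega)|$), and $J_{f_k}$ converges weakly in $L^1(\Omega)$ (Step~3).

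\emph{Step 2: splitting off the gradient and surface terms.} Write $E_c(h)=\int_\Omega|Dh|^p+\int_\Omega\varphi(J_h)+aP(A(h),\rn)$. By superadditivity of the limit inferior it is enough to dominate each of the three summands of $E_c(f)$ by the limit inferior (along the chosen subsequence) of the corresponding summand of $E_c(f_k)$. For the first, $u\mapsto\int_\Omega|Du|^p$ is convex and strongly continuous on $W^{1,p}$, hence weakly lower semicontinuous, so $\int_\Omega|Df|^p\le\liminf_k\int_\Omega|Df_k|^p$. For the third, the weak-$*$ convergence of $\chi_{A(f_k)}$ and lower semicontinuity of the total variation give $P(A(f),\rn)=|D\chi_{A(f)}|(\rn)\le\liminf_k|D\chi_{A(f_k)}|(\rn)=\liminf_k P(A(f_k),\rn)$.

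\emph{Step 3: the Jacobian term.} By \eqref{varphi2} and $\sup_k\int_\Omega\varphi(J_{f_k})\le C$, the de la Vall\'ee Poussin criterion shows $\{J_{f_k}\}$ is equi-integrable in $L^1(\Omega)$, so along the subsequence $J_{f_k}\rightharpoonup\theta$ weakly in $L^1(\Omega)$ with $\theta\ge0$; extending $\varphi$ by $+\infty$ on $(-\infty,0]$ it is convex, lower semicontinuous and bounded below, whence $u\mapsto\int_\Omega\varphi(u)$ is weakly lower semicontinuous on $L^1$ and $\int_\Omega\varphi(\theta)\le\liminf_k\int_\Omega\varphi(J_{f_k})$. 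The remaining, and genuinely hard, task is to identify the limit, that is, to prove $\theta=J_f$ a.e.; this is precisely where the regime $p<n-1$ is overcome and where Mal\'y's counterexample is evaded, since for his maps the corresponding $\theta$ would be strictly below $J_{\operatorname{id}}$. The ingredients for this identification are: the change-of-variables formula for each $f_k$ --- valid because $f_k$ is a homeomorphism off a finite set, satisfies the Lusin $(N)$ condition and has $J_{f_k}>0$ --- which gives $\int_U J_{f_k}=|f_k(U)|$ for open $U\subset\subset\Omega$, the image lying in the complement of the cavities; the uniform measure estimate of Lemma~\ref{l:reverse}, that is \eqref{reverse2}, which (away from the created cavities) keeps $|f_k(U)|$ small uniformly in $k$ when $|U|$ is small, so that no mass of $J_{f_k}$ escapes to a set of small measure; and the a.e.\ convergence $f_k\to f$ together with the a.e.\ injectivity of $f$ and of the $f_k$ from Theorem~\ref{modBHZ}. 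Using these one pins $\lim_k|f_k(U)|$ down in terms of $f$ alone, obtains $\int_U\theta=\int_U J_f$ for all such $U$, hence $\theta=J_f$ a.e., and therefore $\int_\Omega\varphi(J_f)\le\liminf_k\int_\Omega\varphi(J_{f_k})$. Adding the three estimates proves \eqref{LSCForClosureCavity}. The main obstacle is exactly this identification: turning the uniform non-concentration of the pushed-forward volumes and the merely a.e.\ convergence of the $f_k$ --- which never controls the maps on the spheres used by the degree-theoretic arguments available for $p\ge n-1$ --- into equality of the Jacobian of the weak limit with the weak limit of the Jacobians.

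\emph{Step 4: existence of a minimizer.} The class $\overline{\mathcal{H}_{c,f_0}^{1,p}}^w$ is nonempty since it contains $f_0$, and $E_c\ge0$; a minimizing sequence has uniformly bounded energy, hence is bounded in $W^{1,p}$ as in Step~1, so a subsequence converges weakly to some $f$, which by the theorem lies in $\overline{\mathcal{H}_{c,f_0}^{1,p}}^w$ and satisfies $E_c(f)\le\liminf_k E_c(f_k)=\inf E_c$, i.e.\ $f$ minimizes.
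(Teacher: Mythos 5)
Your proposal has two genuine gaps.

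First, the reduction in Step 1 does not work as stated. From the (to-be-proven) inequality for sequences in $\mathcal{H}_{c,f_0}^{1,p}$ you only get $E_c(f_k)\le\liminf_j E_c(g_{k,j})$, which is a \emph{lower} bound on the energies of the approximating homeomorphisms; it in no way allows you to ``pick $j(k)$ with $E_c(g_{k,j(k)})\le E_c(f_k)+1/k$''. The definition of $\overline{\mathcal{H}_{c,f_0}^{1,p}}^w$ only guarantees some approximating sequence with $E_c(g_{k,j})\le C$, and nothing rules out that every such sequence has $\liminf_j E_c(g_{k,j})$ strictly larger than $E_c(f_k)$ (this would be a recovery-sequence statement, which is neither claimed nor available). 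Consequently your final chain $E_c(f)\le\liminf_k E_c(g_{k,j(k)})\le\lim_k E_c(f_k)$ collapses: without the energy-recovery you only get the useless bound by $C$. The paper avoids this entirely: the diagonal sequence $f_{k,k}$ is used only to show $f\in\overline{\mathcal{H}_{c,f_0}^{1,p}}^w$ and, via Corollary \ref{JackOfAllTrades} (and its cavity analogue), to transfer the \emph{identification} $\det Df_{k,m}\rightharpoonup\det Df_k$ and $\det Df_{k,k}\rightharpoonup\det Df$ in $L^1$, so that $\det Df_k\rightharpoonup\det Df$ holds for the original sequence $f_k$ itself; Eisen's Theorem \ref{LSC} is then applied directly to $(Df_k,\det Df_k)$, and the perimeter term is handled separately by Theorem \ref{vitana}/Proposition \ref{maggi}. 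Energies of the approximants are never compared with $E_c(f_k)$.

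Second, Step 3 only lists ingredients for the crucial identification $\theta=J_f$ and does not prove it; but this identification is the technical core of the whole result (Steps 2--4 of Theorem \ref{LSConHomThm} in the paper, i.e. the area formula on $\Omega\setminus N$, a.e. injectivity via Theorem \ref{modBHZ}, and the two compactness/contradiction arguments showing $|f_k(B)|\to|f(B\setminus N)|$ via Lemma \ref{l:reverse}). Moreover, in the cavitation setting the inequality $\liminf_k|f_k(B)|\ge|f(B\setminus N)|$ has an extra difficulty you do not address: part of $f(B\setminus N)$ could a priori be covered by the cavity sets $A(f_{k})$ of the approximants, so one must first know that $f(\Omega\setminus N)$ and $A(f)$ are essentially disjoint (Lemma \ref{nutne}), that the cavity sets converge in measure (Theorem \ref{vitana}, Lemma \ref{lematko}, Theorem \ref{t:cavities_same} -- which is also what makes $A(f)$ and hence $E_c(f)$ well defined), and then remove $\bigcup_j A(f_{k_j})$ from the compact set $K$ before taking preimages. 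Your remark about Lemma \ref{l:reverse} keeping $|f_k(U)|$ small concerns a different point and does not substitute for this. The splitting of the three energy terms, the convexity/lower semicontinuity of $\int|Du|^p$ and $\int\varphi$, the perimeter lower semicontinuity, and Step 4 (existence of a minimizer) are fine, but as written the proposal omits both the correct mechanism for passing from $\mathcal{H}_{c,f_0}^{1,p}$ to its weak closure and the proof of the Jacobian identification on which everything rests.
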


This result is somewhat similar in spirit to the recent results by Barchiesi, Henao, Mora-Corral and Rodiac \cite{BHMCR3} and Mora-Corral and Mur-Callizo \cite{MCMC}. However, in those papers authors need to assume that $p\geq n-1$ as their assumptions essentially involve some properties of the distributional Jacobian. 
In our setting we allow for $p<n-1$ and hence we cannot even define the distributional Jacobian, so our arguments are in some sense simpler. On the other hand, our class of mappings is somewhat more narrow for $p\geq n-1$.

We know that homeomorphisms in the class $\mathcal{H}_{f_0}^{1,p}$ and $\mathcal{H}_{c,f_0}^{1,p}$ satisfy the Lusin $(N)$ condition and it would be great if the limiting mappings satisfies it as well. 
Unfortunately this is not the case even for strong limits of $W^{1,n-1}$ homeomorphisms as shown by the example in Theorem \ref{example} below. It means that in our proof of lower semicontinuity of energy functionals it is not enough to work with $|f(B)|$ (as it does not represent $\int _B J_f$) and instead we really have to work with terms like $|f(B\setminus N)|$ for some properly chosen null set $N$.

\prt{Theorem}
\begin{proclaim}\label{example}
For every $n\geq 3$ there is a continuous mapping $\f\colon[-1,1]^n\to [-1,1]^n$ with $J_{\f}>0$ a.e.\ which is a strong limit of Sobolev homeomorphisms $\f_k\in W^{1,n-1}([-1,1]^n,\rn)$ with $\f_k(x)=x$ for $x\in\partial[-1,1]^n$, $\f_k$ satisfy the Lusin $(N)$ condition and there is $\varphi$ satisfying \eqref{varphi} and \eqref{varphi2} so that 
\eqn{odkaz}
$$
\sup_k\int_{(0,1)^n}\varphi(J_{\f_k})<\infty
$$
such that $\f$ fails the Lusin $(N)$ condition. Moreover, the distributional Jacobian $\mathcal{J}_{\f}$ is equal to the pointwise Jacobian $J_{\f}$.  
\end{proclaim}

Note that it is very interesting that our map $f$ does not satisfy $(N)$ while the distributional Jacobian does not `see' that some new matter was created there. On the other hand, if we open a cavity then $\mathcal{J}_f$ detects this fact and measures its volume (see e.g. \cite[Remark 2.11 b)]{HK}). If we use the classical Ponomarev’s example (see e.g. \cite[Theorem 4.10]{HK}) of a homeomorphism which fails the Lusin $(N)$ condition then $\mathcal{J}_f$ detects the created matter and $\mathcal{J}_f$ is bounded from below by the restriction of a Hausdorff measure to the null Cantor set which is mapped to a Cantor set of positive measure.

In many theories (like e.g. the theory of mappings of finite distortion in \cite{HK} or models of Nonlinear Elasticity in \cite{HeMo11}, \cite{HMC} and so on) we first show or assume that $\mathcal{J}_f=J_f$ and then we prove that $f$ satisfies the $(N)$ condition. Conversely it is true that validity of $(N)$ in continuous $W^{1,n-1}$ mappings implies that $\mathcal{J}_f=J_f$ (see \cite{DHMS}). However, our example somewhat surprisingly shows that the validity of $(N)$ and $\mathcal{J}_f=J_f$ are not equivalent in this situation.

\section{Preliminaries}

By $f_{\rceil A}$ we denote the restriction of $f$ onto the set $A$. Let $a,b:X\to \er$ be a pair of mappings (typically sequences or functions); whenever we write that $a\approx b$, we mean that there is a number $C>1$ such that
$$
C^{-1}a(x)\leq b(x)\leq Ca(x) \text{ for all } x\in X.
$$

Using \eqref{varphi} and \eqref{varphi2} it follows from \cite[Lemma 2.9]{DHM} and \cite[Lemma 2.1]{DHMo} that small sets are mapped to small sets and big sets are mapped to big sets. Note that the measure bounds $\Phi$ and $\Psi$ depend only on the energy. 

\begin{lemma}\label{l:reverse}
    Given $C_1<\infty$ and $\varphi$ satisfying \eqref{varphi}, 
    there exist monotone functions $\Phi$,~$\Psi\colon (0,\infty)\to(0,\infty)$ with 
    $$
        \lim_{s\to 0^+}\Phi(s)=0\text{ and }\lim_{s\to 0^+}\Psi(s)=0
    $$
    such that: 
    Let $g\in W^{1,1}(\Omega,\rn)$ be a one-to-one mapping with $\int_{\Omega} \varphi(J_g)\le C_1$. 
    Then for each measurable set $A\subseteq \Omega$ we have
    \begin{equation}\label{reverse}
        \Phi(|A|)\le |g(A)|.
    \end{equation}
    If we moreover assume that the Lusin $(N)$ condition holds for $g$ and that \eqref{varphi2} holds, then also
    \begin{equation}\label{reverse2}
        |g(A)|\le \Psi(|A|).
    \end{equation} 
\end{lemma}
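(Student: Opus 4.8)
The plan is to deduce both estimates from the change-of-variables (area) formula for injective Sobolev maps and then to build $\Phi$ and $\Psi$ by two elementary one-parameter optimizations, making it transparent that the resulting functions depend only on $C_1$ and $\varphi$. (One could instead simply quote \cite[Lemma 2.9]{DHM} and \cite[Lemma 2.1]{DHMo}; the only point to check there is that the bounds are energy-dependent and not map-dependent.) First I would record the preliminary reductions: since $\varphi<\infty$ only on $(0,\infty)$ and $\int_\Omega\varphi(J_g)\le C_1$, we have $J_g>0$ a.e.; and since $\varphi$ is convex with $\lim_{t\to0^+}\varphi(t)=\infty$, it is strictly decreasing on some maximal interval $(0,t_0)$ and hence invertible there, so $\varphi(\delta)=r$ is solvable with $\delta\in(0,t_0)$ for all large $r$. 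Since $g\in W^{1,1}(\Omega,\rn)$ is approximately differentiable a.e., the Federer/Whitney decomposition gives a full-measure $\Omega_0=\bigcup_k E_k$ (disjoint) with each $g|_{E_k}$ Lipschitz; the area formula on the pieces together with injectivity ($g|_{\Omega_0}$ has multiplicity $\le 1$) yields $\int_A J_g\,dx=\int_A|J_g|\,dx\le|g(A)|$ for every measurable $A$, and if in addition $g$ satisfies Lusin $(N)$ then $|g(A\setminus\Omega_0)|=0$, so the same decomposition run in the opposite direction upgrades this to the equality $|g(A)|=\int_A J_g\,dx$.

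For \eqref{reverse} I would argue as follows. Fix measurable $A$ and $\delta\in(0,t_0)$; on $\{J_g<\delta\}$ one has $\varphi(J_g)\ge\varphi(\delta)$, so Chebyshev gives $|\{J_g<\delta\}|\le C_1/\varphi(\delta)$ and therefore
$$
|g(A)|\ge\int_{A\setminus\{J_g<\delta\}}J_g\,dx\ge\delta\Big(|A|-\frac{C_1}{\varphi(\delta)}\Big).
$$
Optimizing the nonnegative part of the right-hand side over $\delta$ produces the claim with $\Phi(s):=\sup_{0<\delta<t_0}\delta\,(s-C_1/\varphi(\delta))^+$: this depends only on $C_1,\varphi$, is nondecreasing, is positive for $s>0$ (take $\delta$ with $\varphi(\delta)>C_1/s$), and obeys $\Phi(s)\le s\,\varphi^{-1}(C_1/s)\to0$ as $s\to0^+$; a harmless cutoff in $\delta$ makes $\Phi$ finite on all of $(0,\infty)$.

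For \eqref{reverse2} the extra hypotheses \eqref{varphi2} and Lusin $(N)$ come into play. I would set $\rho(\lambda):=\sup_{t\ge\lambda}t/\varphi(t)$, which by \eqref{varphi2} is finite for large $\lambda$ and tends to $0$ as $\lambda\to\infty$ (this is just de la Vallée-Poussin equi-integrability of the family $\{J_g:\int_\Omega\varphi(J_g)\le C_1\}$). Splitting $A$ along $\{J_g\le\lambda\}$ and $\{J_g>\lambda\}$ and using the equality from the first paragraph (this is where $(N)$ enters) gives, for every $\lambda>0$,
$$
|g(A)|=\int_A J_g\,dx\le\lambda|A|+\rho(\lambda)\int_\Omega\varphi(J_g)\,dx\le\lambda|A|+C_1\rho(\lambda),
$$
and taking the infimum over $\lambda$ yields \eqref{reverse2} with $\Psi(s):=\inf_{\lambda>0}(\lambda s+C_1\rho(\lambda))$, again depending only on $C_1,\varphi$, nondecreasing, finite and positive, and with $\Psi(s)\to0$ as $s\to0^+$ (choose $\lambda$ with $C_1\rho(\lambda)<\varepsilon/2$, then $\Psi(s)<\varepsilon$ for $s<\varepsilon/(2\lambda)$).

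The only genuinely delicate point, I expect, is the area-formula step for a merely $W^{1,1}$ injective map: one has to be sure that the null set $\Omega\setminus\Omega_0$, whose image is uncontrolled without $(N)$, contributes nothing to $\int_A J_g$, and conversely that Lusin $(N)$ is precisely what is needed to discard it when bounding $|g(A)|$ from above. Everything else is one-variable calculus and bookkeeping.
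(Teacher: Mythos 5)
Your argument is correct, and it is worth noting that the paper itself gives no proof of this lemma at all: it simply quotes \cite[Lemma 2.9]{DHM} and \cite[Lemma 2.1]{DHMo}, exactly the shortcut you mention in passing. Your self-contained route -- the area formula for a.e.\ approximately differentiable $W^{1,1}$ maps via the Federer-type decomposition into Lipschitz pieces, giving $\int_A J_g\le |g(A)|$ from injectivity alone and the reverse inequality once Lusin $(N)$ discards the exceptional null set, followed by the Chebyshev optimization for $\Phi$ and the de la Vall\'ee-Poussin--type truncation for $\Psi$ -- is essentially the mechanism behind those cited lemmas, and it has the advantage of making completely explicit the point the paper only asserts, namely that $\Phi$ and $\Psi$ depend only on $C_1$ and $\varphi$ and not on the map $g$. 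The area-formula ingredient is the same one the paper invokes later (via \cite{Ha}) in Step 2 of the proof of Theorem 3.2, so nothing foreign is being imported. Three small housekeeping points you have essentially anticipated but should state: $|g(A)|$ should be read as outer measure (or one notes that Lipschitz images of measurable sets are measurable on each piece $E_k$), the hypothesis $\int_\Omega\varphi(J_g)\le C_1$ is indeed to be read as forcing $J_g>0$ a.e.\ since $\varphi$ lives on $(0,\infty)$, and the cutoff $\delta\in(0,\min\{t_0,1\})$ in the definition of $\Phi$ is genuinely needed for finiteness when $t_0=\infty$; with these remarks your proof is complete and could replace the citation.
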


The following is \cite[Theorem~1.1]{BHZ}. Whenever we write $f_k\rightharpoonup f$, we mean that $f_k$ converge to $f$ weakly in the corresponding space. 
\begin{thm}\label{BHZthm} 
	Let $n\geq 2$, $\Omega\subseteq \rn$ be a domain and let $p>\left\lfloor\frac{n}{2}\right\rfloor$ for $n\geq 4$ or $p\geq 1$ for $n=2,3$. Let $f_k\in W^{1,p}(\Omega,\rn)$ be a sequence of homeomorphisms such that $f_k\rightharpoonup f$ weakly in $W^{1,p}(\Omega,\rn)$ and assume that $J_f>0$ a.e. 
	Then $f$ is injective a.e., i.e. there is a set $N\subseteq \Omega$ with $|N|=0$ such that $f|_{\Omega\setminus N}$ is injective. 
\end{thm}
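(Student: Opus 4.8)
The plan is to argue by contradiction and to produce, out of the homeomorphisms $f_k$, a linking number of two \emph{lower-dimensional} spheres which is forced to vanish along the sequence but not for the limit $f$. This is exactly where the restriction $p>\lfloor\frac n2\rfloor$ is used: it is the range in which one can place two spheres $\Sigma_1,\Sigma_2$ of complementary dimensions $m_1+m_2=n-1$, with $m_1,m_2\le\lfloor\frac n2\rfloor<p$, on which both $f$ and every $f_k$ are continuous, so that the linking number is even defined.

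First I would run the measure-theoretic reduction. Assume $f$ is not injective a.e. Since $f\in W^{1,p}(\Omega,\rn)$ it is approximately differentiable a.e., and $J_f>0$ a.e., so the approximate differential is invertible at a.e.\ point. A density/Fubini argument on $\Omega\times\Omega$ then produces two \emph{distinct} points $a,b$, lying in disjoint balls $\overline{B(a,r_0)},\overline{B(b,r_0)}\subset\Omega$, with $f(a)=f(b)=:c$, at which $f$ is approximately continuous and approximately differentiable with invertible differential, and at which $f_k(a)\to c$, $f_k(b)\to c$ (after passing to a subsequence, using Rellich on a ball and Fubini).

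Next, fix $m_1=\lfloor\frac n2\rfloor$ and $m_2=n-1-m_1=\lceil\frac n2\rceil-1$, so $m_1+m_2=n-1$. Choose $(m_i+1)$-dimensional affine subspaces through $a$ and $b$ and radii $\rho_1,\rho_2<r_0$, yielding round spheres $\Sigma_1=S^{m_1}(a,\rho_1)$ and $\Sigma_2=S^{m_2}(b,\rho_2)$. Since $p>m_i=\dim\Sigma_i$, for a.e.\ such choice the restrictions $f|_{\Sigma_i}$ and $f_k|_{\Sigma_i}$ lie in $W^{1,p}(\Sigma_i,\rn)$ and hence are continuous by Morrey's embedding on $\Sigma_i$; moreover $\sup_k\|f_k\|_{W^{1,p}(\Omega)}<\infty$, so for a.e.\ such $\Sigma_i$ the traces $f_k|_{\Sigma_i}$ are, along a subsequence, bounded in $W^{1,p}(\Sigma_i)$, hence equicontinuous, hence converge uniformly to $f|_{\Sigma_i}$. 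Now $\Sigma_1$ bounds the $(m_1+1)$-dimensional disk $\bar B_1\subset B(a,r_0)$, which is disjoint from $\Sigma_2\subset B(b,r_0)$; by injectivity of $f_k$ the disk $f_k(\bar B_1)$ is disjoint from $f_k(\Sigma_2)$, whence $\operatorname{lk}(f_k(\Sigma_1),f_k(\Sigma_2))=0$ for every $k$ (here $m_1+1+m_2=n$, so the linking number may be computed as the intersection number of $f_k(\bar B_1)$ with $f_k(\Sigma_2)$). Using the uniform convergence on $\Sigma_i$ and the homotopy invariance of the linking number, one gets $\operatorname{lk}(f(\Sigma_1),f(\Sigma_2))=0$ as well, \emph{provided} $f(\Sigma_1)\cap f(\Sigma_2)=\emptyset$.

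The main obstacle is the final step: to choose the two affine subspaces and the radii so that on the one hand $f(\Sigma_1)$ and $f(\Sigma_2)$ remain disjoint, while on the other $\operatorname{lk}(f(\Sigma_1),f(\Sigma_2))\ne 0$, which is the desired contradiction. This is where invertibility of $Df(a)$ and $Df(b)$ is essential: near $a$ and $b$ the map $f$ is, in the approximate sense, an invertible affine map, so $f(\Sigma_i)$ is up to a small error the affine image of a round sphere, and one must track carefully the relative position of $f(\bar B_1)$, of the point $c$, and of the small sphere $f(\Sigma_2)$ around $c$. The genuinely delicate point is that one needs control of the error between $f$ and its approximate differential \emph{uniformly on whole spheres}, not merely on a set of full measure, and $f$ itself need not be continuous. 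Finally, the two excluded cases $(n,p)\in\{(2,1),(3,1)\}$ — where $\lfloor\frac n2\rfloor$ is not strictly below $p$ — are treated separately by low-dimensional arguments: in the plane a weak limit of Sobolev homeomorphisms is monotone, and a monotone map with $J_f>0$ a.e.\ is injective a.e.
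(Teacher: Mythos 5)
Your outer skeleton does match the argument behind this theorem (which the paper imports from \cite{BHZ} and adapts in Theorem \ref{modBHZ}): restrict to spheres of complementary dimensions $m_1+m_2=n-1$ with $m_i\le\lfloor n/2\rfloor<p$ so that traces are continuous and converge uniformly along a subsequence; observe that for each homeomorphism $f_k$ the image of the spanning disk is disjoint from the image of the other sphere, so the images are unlinked; and pass the unlinking to the limit. The genuine gap is the step you yourself call ``the main obstacle'': producing two spheres whose $f$-images are disjoint and have \emph{nonzero} linking number. This is the heart of the proof and it is not supplied; worse, the configuration you sketch cannot supply it. If $f(a)=f(b)=c$ and $f$ is replaced near $a$ and $b$ by its (invertible) affine approximations, then $f(\Sigma_1)$ and $f(\Sigma_2)$ are, up to small errors, ellipsoidal spheres that are both centrally symmetric about the same point $c$, and any two such spheres of complementary dimensions have linking number zero: the intersections of the spanning disk through $c$ with the small sphere come in antipodal pairs whose signs cancel (in $\mathbb{R}^3$, two concentric circles in transversal planes are unlinked). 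So from two coincidence points with invertible differentials you get no contradiction at all. The actual argument in \cite{BHZ} (visible in the modifications described in the proof of Theorem \ref{modBHZ}) works instead with the positive-measure overlap of the images of two disjoint cubes and a normalized ``offset'' configuration --- squares/spheres in orthogonal coordinate planes around two \emph{different} base points (the points with first coordinate $\mp\tfrac{2}{18}$) --- chosen so that the $f_k$-images of their boundaries are provably linked for large $k$, which is a Hopf-link type position, not a concentric one.

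Two further points are left open in your sketch. First, you correctly note that approximate differentiability at $a,b$ gives no uniform control of $f$ on an entire sphere (a given sphere may lie largely in the exceptional set, and $f$ need not be continuous), so even granting a good target configuration, the passage from ``approximately affine at the center'' to ``the trace on the sphere is uniformly close to the affine image'' is unproved; the selection of good spheres has to be made quantitatively, through the measure of the overlap, not through two exceptional points. Second, the borderline case $n=3$, $p=1$ is covered by the statement but not by your mechanism: there $m_1=m_2=1$ and $p>m_i$ fails, so boundedness in $W^{1,1}$ on circles gives neither equicontinuity nor uniform convergence; you list $(3,1)$ among the cases to be ``treated separately'' but offer an argument only for the planar case.
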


The following is \cite[Theorem~1.1]{HO}.
\begin{thm}\label{HOthm}
	Let $n\geq 2$, $\Omega\subseteq \rn$ be a domain and let $p>\left\lfloor\frac{n}{2}\right\rfloor$ for $n\geq 4$ or $p\geq 1$ for $n=2,3$. Suppose that $f_k\in W^{1,p}(\Omega,\rn)$ is a sequence of homeomorphism such that $f_k\rightharpoonup f$ weakly in $W^{1,p}$ and further assume that $\det Df_k>0$ on a set of positive measure for each $k$. Then $\det Df \geq 0$ a.e. in $\Omega$.
\end{thm}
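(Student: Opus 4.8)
The plan is to realise $\f$ as a strong $W^{1,n-1}$ limit of explicit bi--Lipschitz homeomorphisms $\f_k$ of $[-1,1]^n$, all equal to the identity outside $(0,1)^n$, built by a Cantor--type iteration. Inside $(0,1)^n$ I fix a negligible self--similar Cantor set $C=\bigcap_k\bigcup_{j=1}^{N_k}Q^{(k)}_j$, where at level $k$ there are $N_k$ closed cubes $Q^{(k)}_j$ of side $r_k$, together with a ``fat'' Cantor set $C'=\bigcap_k\bigcup_{j=1}^{N_k}P^{(k)}_j$ with the same tree of inclusions, the cube $P^{(k)}_j$ having side $R_k\approx N_k^{-1/n}$ with $N_kR_k^n\to|C'|>0$; the scales are chosen so that $Q^{(k)}_j$ sits well inside $P^{(k)}_j$, $r_k\ll R_k$, and $N_kR_k^{n-1}r_k\to0$ (which forces $N_kr_k^n\to0$, hence $|C|=0$). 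Matching infinite branches of the common tree gives a bijection $\pi\colon C\to C'$, and the whole iteration is arranged so that the limit $\f$ satisfies $\f|_C=\pi$. Then $\f(x)=x$ on $\partial[-1,1]^n$, $J_\f>0$ a.e., and $|\f(C)|=|C'|>0=|C|$, so $\f$ violates the Lusin $(N)$ condition.

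The homeomorphisms $\f_k$ are constructed inductively: passing from $\f_{k-1}$ to $\f_k$ one performs a modification of small support in which the tightly clustered descendants of each surviving cube are ``spread and threaded'', through thin tentacular channels, toward their prescribed destinations inside the $P^{(k)}_j$, the surrounding collars being carried by almost--identity homeomorphisms; the channels used at distinct levels are kept pairwise disjoint, so each $\f_k$ is a genuine bi--Lipschitz homeomorphism fixing $\partial[-1,1]^n$ (in particular it satisfies $(N)$), and the nesting of the target cubes forces $\f_k(x)\to\pi(x)$ for $x\in C$. The dominant cost --- the spreading --- carries a derivative of size $\lesssim R_k/r_k$ on a set of measure $\lesssim N_kr_k^n$, hence an $L^{n-1}$--energy $\lesssim N_kR_k^{n-1}r_k\to0$; choosing the scales to decrease geometrically makes the successive modifications summable in $W^{1,n-1}$, so $(\f_k)$ is Cauchy, $\f_k\to\f$ strongly in $W^{1,n-1}$ and uniformly, $\f$ is continuous with $\f|_C=\pi$, and (since $D\f_k\to D\f$ in $L^{n-1}$ gives $J_{\f_k}\to J_\f$ a.e.\ on a subsequence) $J_\f>0$ a.e., the Jacobian being able to vanish only on the null set $C$. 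Finally the channels are taken of volume comparable to that of the domain cube they leave, so the volume distortion $J_{\f_k}$ on the modified region stays of order one; hence $\sup_k\int_{(0,1)^n}\varphi(J_{\f_k})<\infty$ for any $\varphi$ subject to \eqref{varphi}--\eqref{varphi2} (e.g.\ $\varphi(t)=t^2-\log t$, or a $\varphi$ blowing up slowly at $0$ if the collar bookkeeping forces some degradation), which is \eqref{odkaz}.

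Once the sequence is available, the identity $\mathcal J_\f=J_\f\,\mathcal L^n$ is soft. The $\f_k$ take values in $[-1,1]^n$, hence are bounded in $L^\infty$ and, converging in $L^1$, satisfy $\f_k\overset{*}{\rightharpoonup}\f$ in $L^\infty$; strong convergence gives $D\f_k\to D\f$ in $L^{n-1}$, so $\operatorname{cof}(D\f_k)\to\operatorname{cof}(D\f)$ in $L^1$ (the cofactor is $(n-1)$--linear in the matrix entries) and, on a subsequence, $J_{\f_k}\to J_\f$ a.e.; by \eqref{odkaz} together with \eqref{varphi2} the family $\{J_{\f_k}\}$ is equiintegrable, so in fact $J_{\f_k}\to J_\f$ in $L^1$. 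Inserting these into the Piola--identity formula for the distributional Jacobian and testing against $\phi\in C_c^\infty$, the product $\f_k\operatorname{cof}(D\f_k)$ converges to $\f\operatorname{cof}(D\f)$ weakly in $L^1$, so $\mathcal J_{\f_k}\to\mathcal J_\f$ in $\mathcal D'$; on the other hand each $\f_k$ is a continuous $W^{1,n-1}$ homeomorphism satisfying $(N)$, so $\mathcal J_{\f_k}=J_{\f_k}\,\mathcal L^n$ by \cite{DHMS}, and the right--hand side converges to $J_\f\,\mathcal L^n$. Comparing the two limits yields $\mathcal J_\f=J_\f\,\mathcal L^n$; lower semicontinuity of $\varphi$ with \eqref{odkaz} additionally gives $\int\varphi(J_\f)<\infty$.

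The genuine difficulty is entirely in the construction of the $\f_k$: one must thread the negligible set $C$ \emph{onto} the fat set $C'$ so that the spreading costs vanishing $L^{n-1}$--energy --- this is exactly where the exponent $p=n-1$ and the packing relation $R_k\approx N_k^{-1/n}$ enter --- while keeping the volume distortion bounded and each $\f_k$ an honest boundary--preserving homeomorphism; since the refinements of the channels cannot be contained inside the previous ones, the geometric bookkeeping (disjointness of channels, telescoping of the energies) is the delicate part. The deeper point is that all of this must be compatible with \emph{strong}, not merely weak, convergence in $W^{1,n-1}$: Mal\'y's example shows that under weak convergence the Jacobian can simply collapse, whereas strong convergence pins $\int J_{\f_k}$ to $\int J_\f$, and it is only the loss of injectivity of $\f$ on the null set $C$ --- in the limit $C'$ is covered both by $\f(C)$ and by $\f((0,1)^n\setminus C)$ --- that lets Lusin $(N)$ fail while the distributional Jacobian registers no created matter. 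For $n=1$ no such example can exist, since a strong $W^{1,1}$ limit of homeomorphisms of an interval is automatically absolutely continuous; the phenomenon is genuinely multidimensional.
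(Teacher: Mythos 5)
Your proposal does not address the statement at all: it is a construction sketch for Theorem \ref{example} (the counterexample to the Lusin $(N)$ condition via Cantor sets and tentacles), whereas the statement to be proved is Theorem \ref{HOthm}, namely that if homeomorphisms $f_k\in W^{1,p}(\Omega,\rn)$ with $p>\lfloor n/2\rfloor$ (or $p\geq 1$ for $n=2,3$) converge weakly to $f$ and satisfy $\det Df_k>0$ on a set of positive measure, then $\det Df\geq 0$ a.e. Nothing in your text concerns the sign of the Jacobian of a \emph{general} weak limit of homeomorphisms: you never use the hypothesis $\det Df_k>0$, never analyse $\det Df$ for an arbitrary weakly convergent sequence, and never produce the conclusion $\det Df\geq 0$ a.e. Constructing one specific strongly convergent sequence with prescribed pathological behaviour is logically unrelated to proving a universal statement about all weakly convergent sequences of homeomorphisms.

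For the record, the paper does not prove Theorem \ref{HOthm} either; it is quoted verbatim as \cite[Theorem~1.1]{HO} (Hencl--Onninen). A proof would have to exploit the fact that homeomorphisms are sense-preserving or sense-reversing, together with a linking/degree argument that survives weak convergence below the continuity threshold $p=n-1$ --- this is the content of \cite{HO} and is of a completely different nature from the Cantor-tower construction you describe. If you intended to prove Theorem \ref{example}, your outline is broadly in the spirit of the paper's Section~5 (which maps a null set onto a positive-measure Cantor set $C_A$ by collapsing tentacles, rather than mapping a null Cantor set onto a fat one), but that is not the task here.
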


Both of these statements are generalized to our cases below.

\subsection{Condition for equiintegrability of Jacobians}

We need the following lemma from \cite[Lemma 6.4]{CDL} to show that a constructed sequence has equiintegrable Jacobians. 

\begin{lemma}\label{lemmaCDL}
    Let $\Omega\subseteq \rn$ be a domain and let $f_k:\Omega\to\rn$ be a sequence of bi-Lipschitz mappings. Assume that we have two conditions 
		$$
		\begin{aligned}
		(i) &\text{ for every }\epsilon>0\text{ there is }\delta>0\text{ so that for every measurable }E\subseteq \Omega\\
		&\text{ we have }|E|<\delta\Longrightarrow|f_k(E)|<\epsilon,\\
		(ii) &\text{ for every }\epsilon>0\text{ there is }\delta>0\text{ so that for every measurable }E\subseteq \rn\\
		&\text{ we have }|E|<\delta\Longrightarrow|f^{-1}_k(E)|<\epsilon.\\
		\end{aligned}
		$$
		Then there is a convex function $\varphi:\er\to [0,\infty)$ which satisfies \eqref{varphi} and \eqref{varphi2} such that
		$$
		\sup_k \int_{\Omega}\varphi(J_{f_k})<\infty. 
		$$
\end{lemma}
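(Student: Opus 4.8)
The plan is a two-sided de~la~Vall\'ee Poussin argument: condition $(i)$ controls the Jacobians where they are large and yields the superlinear-at-infinity part of $\varphi$, while condition $(ii)$, transported to $\Omega$ via the change of variables for the bi-Lipschitz maps $f_k$, controls the Jacobians where they are small and yields the part of $\varphi$ that blows up at $0$.

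First recall that since each $f_k$ is bi-Lipschitz we have $J_{f_k}>0$ a.e., the change of variables $|f_k(E)|=\int_E J_{f_k}$ holds for measurable $E\subseteq\Omega$, and $J_{f_k^{-1}}(y)=1/J_{f_k}(f_k^{-1}(y))$ for a.e.\ $y\in f_k(\Omega)$, with $|f_k^{-1}(F)|=\int_F J_{f_k^{-1}}$ for measurable $F\subseteq\rn$; in particular $\int_{\rn}J_{f_k^{-1}}=|\Omega|$, and covering $\Omega$ by finitely many sets of measure $<\delta$ shows, via $(i)$, that $\sup_k\int_\Omega J_{f_k}<\infty$ (here one uses $|\Omega|<\infty$, which holds in all situations where the lemma is applied). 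Condition $(i)$ is exactly the equiintegrability of $\{J_{f_k}\}$ on $\Omega$; being in addition $L^1$-bounded, the de~la~Vall\'ee Poussin criterion provides a nondecreasing convex $\Theta_1\colon[0,\infty)\to[0,\infty)$ with $\Theta_1(0)=0$ and $\Theta_1(t)/t\to\infty$ as $t\to\infty$ such that $\sup_k\int_\Omega\Theta_1(J_{f_k})<\infty$. Likewise, $|f_k^{-1}(F)|=\int_F J_{f_k^{-1}}$ together with $(ii)$ says that $\{J_{f_k^{-1}}\}$ is equiintegrable, and being $L^1$-bounded (by $|\Omega|$) it yields a nondecreasing convex $\Theta_2\colon[0,\infty)\to[0,\infty)$ with $\Theta_2(0)=0$ and $\Theta_2(t)/t\to\infty$, which — after adding a small multiple of $t$, harmless for all the above — we may also take strictly increasing (hence $\Theta_2>0$ on $(0,\infty)$), such that $\sup_k\int_{\rn}\Theta_2(J_{f_k^{-1}})<\infty$.

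Next I would transport the second bound back to $\Omega$ by the change of variables:
\[
\int_{\rn}\Theta_2(J_{f_k^{-1}}(y))\,dy=\int_{\Omega}\Theta_2\!\left(\frac{1}{J_{f_k}(x)}\right)J_{f_k}(x)\,dx .
\]
Thus, setting $\psi(t):=t\,\Theta_2(1/t)$ for $t>0$, we obtain $\sup_k\int_\Omega\psi(J_{f_k})<\infty$. The function $\psi$ is the restriction to $\{x=1\}$ of the perspective function $(x,t)\mapsto t\Theta_2(x/t)$ of the convex function $\Theta_2$, hence $\psi$ is convex on $(0,\infty)$; it is strictly positive there because $\Theta_2>0$ on $(0,\infty)$; and $\psi(t)=\Theta_2(1/t)\big/(1/t)\to\infty$ as $t\to0^+$ by the superlinearity of $\Theta_2$. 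Finally set $\varphi:=\Theta_1+\psi$. As a sum of convex functions it is convex on $(0,\infty)$, it is positive since $\psi>0$, it satisfies $\lim_{t\to0^+}\varphi(t)=\infty$ because $\psi$ does, and it satisfies $\lim_{t\to\infty}\varphi(t)/t=\infty$ because $\Theta_1$ does; hence $\varphi$ fulfils \eqref{varphi} and \eqref{varphi2}, and $\sup_k\int_\Omega\varphi(J_{f_k})\le\sup_k\int_\Omega\Theta_1(J_{f_k})+\sup_k\int_\Omega\psi(J_{f_k})<\infty$, as required.

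The only genuinely non-routine point is recognizing that $(ii)$ should be read, through the change of variables, as equiintegrability of $\{J_{f_k^{-1}}\}$, and then ``pulled back'' by the transform $\Theta_2\mapsto\psi(t)=t\Theta_2(1/t)$ — precisely the operation that turns superlinear-at-infinity growth into blow-up-at-zero growth while preserving convexity (the perspective trick). Everything else — the change of variables and the identity $J_{f_k^{-1}}=1/(J_{f_k}\circ f_k^{-1})$ for bi-Lipschitz maps, the de~la~Vall\'ee Poussin criterion, and the elementary convexity/growth bookkeeping for $\varphi=\Theta_1+\psi$ — is standard.
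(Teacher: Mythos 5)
Your argument is correct. Note that the paper does not prove Lemma~\ref{lemmaCDL} at all — it is imported verbatim from Conti--De Lellis \cite{CDL} (Lemma 6.4 there) — so there is no in-paper proof to compare with; your proof is a clean, self-contained version of the standard argument. The two key moves are sound: condition $(i)$ combined with $|f_k(E)|=\int_E J_{f_k}$ is exactly uniform absolute continuity of the Jacobians, and the de la Vall\'ee Poussin construction (e.g.\ $\Theta(t)=\sum_j(t-M_j)^+$ with suitably chosen levels $M_j$) works both on $\Omega$ and on $\rn$ once one has the uniform $L^1$ bound, so the infinite measure of $\rn$ causes no trouble for $\Theta_2$; and the perspective trick $\psi(t)=t\,\Theta_2(1/t)$ correctly converts superlinearity of $\Theta_2$ at infinity into convex blow-up of $\psi$ at $0^+$, with the change of variables $\int_{\rn}\Theta_2(J_{f_k^{-1}})=\int_\Omega \Theta_2(1/J_{f_k})J_{f_k}$ transporting the bound back to $\Omega$. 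The only caveats are ones already implicit in the statement itself rather than gaps in your reasoning: one needs $|\Omega|<\infty$ (which you flag, and which the conclusion forces anyway, since a positive convex $\varphi$ with the limits in \eqref{varphi}--\eqref{varphi2} is bounded below by a positive constant) and $J_{f_k}>0$ a.e., i.e.\ the bi-Lipschitz maps are sense-preserving, as they are in the application; with those conventions your bookkeeping for $\varphi=\Theta_1+\psi$ is complete.
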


\subsection{Lower semicontinuity of functionals}

We need the following result of Eisen \cite{E}. 

\begin{thm}\label{LSC}
Let $\Omega\subseteq \rn$ be an open bounded set and let $W(x,u):\Omega\times \er^N\to[0,\infty)$ have the following properties:
$$
\begin{aligned}
(i)\ &f(\cdot,u): \Omega\to\er\text{ is measurable for every }u\in \er^N,\\
(ii)\ &f(x,\cdot): \Omega\to\er\text{ is convex for every }x\in \Omega,\\
\end{aligned}
$$
Let $u_k(x)$ be a sequence of functions such that $u_k(x)\rightharpoonup u(x)$ in $L^{1}(\Omega)$. 
Then 
$$
\int_{\Omega} W(x,u(x))\; dx \leq \liminf_{k\to\infty} \int_{\Omega} W(x,u_k(x))\; dx .
$$
\end{thm}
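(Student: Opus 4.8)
The plan is to exploit convexity of $W$ in the $u$-variable via Mazur's lemma, which turns the weak $L^1$ convergence $u_k\rightharpoonup u$ into \emph{strong} $L^1$ convergence of suitable convex combinations, and then to use that a finite convex function on $\er^N$ is continuous, together with Fatou's lemma. First I would record the elementary reductions. For a.e.\ fixed $x$ the map $W(x,\cdot)$ is convex and finite on $\er^N$, hence continuous; combined with measurability in $x$ this makes $W$ a Carathéodory (in particular normal) integrand, so $x\mapsto W(x,v(x))$ is measurable for every measurable $v$ and all the integrals below are well defined. We may assume $L:=\liminf_{k\to\infty}\int_\Omega W(x,u_k(x))\,dx<\infty$, since otherwise there is nothing to prove, and after passing to a subsequence (not relabelled) we may assume $\int_\Omega W(x,u_k(x))\,dx\to L$ with each of these integrals finite; this subsequence still satisfies $u_k\rightharpoonup u$ in $L^1(\Omega)$.

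Next I would apply Mazur's lemma in the Banach space $L^1(\Omega)$. Since every tail $(u_k)_{k\ge m}$ also converges weakly to $u$, its limit lies in the strong closure of the convex hull of $\{u_k:k\ge m\}$, so for each $m$ there is a finite convex combination
$$
v_m=\sum_{k\ge m}\lambda^{(m)}_k u_k,\qquad \lambda^{(m)}_k\ge 0,\quad \sum_{k\ge m}\lambda^{(m)}_k=1,
$$
with $\|v_m-u\|_{L^1(\Omega)}\to 0$. Passing to a further subsequence we may in addition assume $v_m(x)\to u(x)$ for a.e.\ $x\in\Omega$. By convexity of $W(x,\cdot)$ we have, for a.e.\ $x$,
$$
W(x,v_m(x))\le\sum_{k\ge m}\lambda^{(m)}_k W(x,u_k(x)),
$$
and integrating (all terms nonnegative) gives
$$
\int_\Omega W(x,v_m(x))\,dx\le\sum_{k\ge m}\lambda^{(m)}_k\int_\Omega W(x,u_k(x))\,dx\le\sup_{k\ge m}\int_\Omega W(x,u_k(x))\,dx,
$$
and the right-hand side tends to $L$ as $m\to\infty$ because along our subsequence $\int_\Omega W(x,u_k)\,dx\to L$. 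Hence $\limsup_{m\to\infty}\int_\Omega W(x,v_m(x))\,dx\le L$.

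It remains to pass to the limit on the left. Since $v_m(x)\to u(x)$ a.e.\ and $W(x,\cdot)$ is continuous, $W(x,v_m(x))\to W(x,u(x))$ for a.e.\ $x$, so by Fatou's lemma (using $W\ge 0$)
$$
\int_\Omega W(x,u(x))\,dx\le\liminf_{m\to\infty}\int_\Omega W(x,v_m(x))\,dx\le L=\liminf_{k\to\infty}\int_\Omega W(x,u_k(x))\,dx,
$$
which is the assertion.

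I do not expect a genuinely hard step here; the argument is the classical ``convexity plus weak convergence implies lower semicontinuity'', and the finite-valuedness of $W$ is exactly what makes continuity in $u$ automatic. The only points needing a little care are the measurability of $x\mapsto W(x,v(x))$ (the Carathéodory property) and the replacement of $\liminf$ by $\limsup$, which is handled by extracting at the outset a subsequence realizing the $\liminf$. An alternative route, avoiding Mazur's lemma, is to write $W(x,u)=\sup_i\bigl(a_i(x)\cdot u+b_i(x)\bigr)$ as the supremum of a countable family of affine functions with measurable coefficients (possible because $W$ is a finite normal convex integrand), to observe that for each $N$ the functional $u\mapsto\int_\Omega\max_{i\le N}\bigl(a_i(x)\cdot u+b_i(x)\bigr)\,dx$ is weakly $L^1$-lower semicontinuous (split $\Omega$ into the measurable arg-max pieces $E_i$ and test $u_k\rightharpoonup u$ against $a_i\chi_{E_i}\in L^\infty$), and then to let $N\to\infty$ by monotone convergence; this yields the same estimate.
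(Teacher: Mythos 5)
Your argument is correct, but note that the paper does not prove this theorem at all: it is quoted as a known result of Eisen \cite{E}, so there is no internal proof to compare against. Your route is the classical one: Mazur's lemma upgrades $u_k \rightharpoonup u$ in $L^1(\Omega)$ to strong (and, after a further extraction, a.e.) convergence of convex combinations $v_m$ of the tails, convexity of $W(x,\cdot)$ gives $\int_\Omega W(x,v_m)\,dx \le \sup_{k\ge m}\int_\Omega W(x,u_k)\,dx$, and Fatou's lemma together with the continuity of the finite convex function $W(x,\cdot)$ closes the estimate; the preliminary extraction of a subsequence realizing the $\liminf$ and the Carath\'eodory measurability of $x\mapsto W(x,v(x))$ are exactly the points that need saying, and you said them. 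Eisen's actual theorem is somewhat more general (integrands $f(x,s,\xi)$ with $s_k\to s$ a.e.\ and $\xi_k\rightharpoonup\xi$ in $L^1$, proved via a selection lemma for measurable sets rather than via Mazur), but for the version stated here your self-contained argument fully suffices; your alternative sketch via a countable supremum of affine functions with measurable coefficients is the other standard proof, modulo the measurable-selection fact you would need to invoke to produce the coefficients.
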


\subsection{(INV) condition}

Suppose that
$f\colon S(a,r) \to \rn$ is continuous.
Following \cite{MS} we define the {\it topological image} of $B(a,r)$ as
$$
f^T(B(a,r)):=\bigl\{y\in \rn\setminus f(S(a,r)):\ \deg(f,S(a,r),y)\neq 0\bigr\},
$$
where $\deg$ denotes the topological degree of the mapping. 
Denote
$$E(f,B(a,r)):=f^T(B(a,r))\cup f(S(a,r)).$$

\begin{definition}[(INV) condition]
We say that $f\colon\Omega\to\rn$ satisfies the condition (INV), provided that for every $a\in\Omega$ there exists an $\cL^1$-null set $N_a$ such that for all $r\in(0,\dist(a,\partial\Omega))\setminus N_a$ the mapping $f_{\rceil_{S(a,r)}}$ is continuous,
\begin{enumerate}[(i)]
	\item $f(x)\in E(f,B(a,r))$ for a.e.\ $x\in \overline{B(a,r)}$ and
	\item $f(x)\in\mathbb{R}^n\setminus f^T(B(a,r))$ for a.e.\ $x\in\Omega\setminus B(a,r)$.
\end{enumerate}
\end{definition}

\subsection{Sets of finite perimeter}

The symmetric difference of two sets is denoted as $A\triangle B=(A\setminus B)\cup(B\setminus A)$. 
	Let $A_k,A\subseteq \Omega \subseteq \rn$ be measurable sets throughout the following. When we write that $A_k\to A$, we mean $|A_k\triangle A|\to 0$ and we refer to this as convergence (in measure) in $\Omega$. If for each point $x\in \Omega$ we can find a neighborhood $G_x$ of $x$ such that $A_k\cap G_x\to A\cap G_x$, then we say that $A_k$ converge to $A$ locally in measure in $\Omega$. If $|A|<\infty$ then $A_k\to A$ in $\rn$ if and only if $\chi_{A_k} \to \chi_A$ in $L^1(\rn)$.

	For the definition of sets of finite perimeter see \cite[Definition~3.11]{AFP}. Let $\Omega \subseteq \rn$ be open. A set $A\subseteq \rn$ has finite perimeter in $\Omega$ if and only if $(\chi_A)_{\rceil \Omega} \in BV_{\loc}(\Omega)$ and $P(A,\Omega) = |D\chi_A|(\Omega)<\infty$ (see \cite[Theorem~3.36]{AFP}), where $|D\chi_A|$ denotes the total variation of the measure $D\chi_A$. A set $A$ has locally finite perimeter in $\Omega \subseteq \rn$ if for every $x\in \Omega$ there is a neighborhood $G_x$ of $x$ such that $P(A,G_x) = |D\chi_A|(G_x)<\infty$. The perimeter has the following lower semi-continuity property (see \cite[Proposition~3.38]{AFP}):
	\begin{prop}\label{maggi}
		Let $A_k\subseteq \rn$ be sets of locally finite perimeter in $\rn$ such that $A_k \to A$ locally in measure in $\rn$, then
		$$
			P(A,G) = |D\chi_A|(G) \leq \liminf_{k \to \infty}|D\chi_{A_k}|(G)  =\liminf_{k \to \infty} P(A_k,G)
		$$
		for every open $G\subseteq \rn$.
	\end{prop}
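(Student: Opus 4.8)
The plan is to reduce everything to the variational characterization of the perimeter. For a measurable set $E\subseteq\rn$ and open $G\subseteq\rn$ recall that
$$
P(E,G)=|D\chi_E|(G)=\sup\Bigl\{\int_E\divergence\phi\,dx:\ \phi\in C^1_c(G,\rn),\ \|\phi\|_{\infty}\le 1\Bigr\},
$$
with the usual convention that the right-hand side equals $+\infty$ when $\chi_E\notin BV_{\loc}(G)$; this is exactly the definition of perimeter recalled above (see \cite[Definition~3.11, Theorem~3.36]{AFP}). It therefore suffices to prove that for each fixed admissible test field $\phi\in C^1_c(G,\rn)$ with $\|\phi\|_{\infty}\le 1$ one has $\int_A\divergence\phi\,dx\le\liminf_{k\to\infty}P(A_k,G)$, and then to take the supremum over $\phi$ on the left-hand side; this interchange is legitimate precisely because the supremum appears only on the left.

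Fix such a $\phi$ and put $K:=\spt\phi$, which is a compact subset of $G$. The first step is to upgrade the hypothesis ``$A_k\to A$ locally in measure'' to $L^1$-convergence of $\chi_{A_k}$ on $K$. By assumption every point $x\in K$ admits a neighborhood $G_x$ with $|(A_k\triangle A)\cap G_x|\to 0$; by compactness of $K$ finitely many of these, say $G_{x_1},\dots,G_{x_N}$, cover $K$, and hence
$$
|(A_k\triangle A)\cap K|\le\sum_{j=1}^N|(A_k\triangle A)\cap G_{x_j}|\longrightarrow 0\quad\text{as }k\to\infty,
$$
so that $\chi_{A_k}\to\chi_A$ in $L^1(K)$. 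Since $\divergence\phi$ is bounded and supported in $K$, this gives
$$
\int_{A_k}\divergence\phi\,dx=\int_K\chi_{A_k}\,\divergence\phi\,dx\longrightarrow\int_K\chi_A\,\divergence\phi\,dx=\int_A\divergence\phi\,dx.
$$

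To conclude, note that for every $k$ the field $\phi$ is an admissible competitor in the variational definition of $P(A_k,G)$, so $\int_{A_k}\divergence\phi\,dx\le|D\chi_{A_k}|(G)=P(A_k,G)$ (a vacuous bound when $P(A_k,G)=\infty$). Combined with the previous display this yields
$$
\int_A\divergence\phi\,dx=\lim_{k\to\infty}\int_{A_k}\divergence\phi\,dx\le\liminf_{k\to\infty}P(A_k,G),
$$
and taking the supremum over all admissible $\phi$ gives $P(A,G)=|D\chi_A|(G)\le\liminf_{k\to\infty}P(A_k,G)$; in particular $A$ has finite perimeter in $G$ whenever the right-hand side is finite. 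The statement as phrased is classical (it is \cite[Proposition~3.38]{AFP}), and there is no genuine obstacle: the only points that need a little care are the passage from convergence locally in measure to $L^1$-convergence on the compact support of the test field, handled above by the covering argument, and the order of the supremum and the $\liminf$, which is harmless as noted.
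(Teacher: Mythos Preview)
Your proof is correct and is the standard argument via the variational definition of perimeter. The paper does not give its own proof of this proposition; it simply quotes it as \cite[Proposition~3.38]{AFP}, and your argument is essentially the one found there.
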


	For the following see \cite[Remark~3.37]{AFP} and \cite[Theorem~3.23]{AFP}. The local weak-star compactness of $BV$ is also conveyed to sets of finite perimeter in the following sense:
	\begin{thm}\label{vitana}
		Let $A_k\subseteq \rn$ be a sequence of sets of finite perimeter in $\rn$ with $\sup_k P(A_k,\rn)<\infty$, then there exists a subsequence $A_{k_m}$  and a set $A$ such that $A_{k_m}\to A$ locally in measure in $\rn$ and
		$$
		P(A,\Omega) \leq \liminf_{m\to\infty} P(A_{k_m}, \Omega)
		$$
		for all $\Omega \subseteq \rn$ open. Especially, if $A_{k}\subseteq B(0,R)$ for some $R>0$ then $A\subseteq B(0,R)$ and $A_{k_m}$ converge to $A$ in measure in $\rn$ globally.
	\end{thm}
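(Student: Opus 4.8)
The plan is to deduce the statement from the weak-$*$ compactness theorem in $BV$ applied on an exhaustion of $\rn$ by balls, combined with the lower semicontinuity of perimeter from Proposition~\ref{maggi}. Set $u_k:=\chi_{A_k}$, so that $u_k\in BV_{\loc}(\rn)$ with $|Du_k|(\rn)=P(A_k,\rn)\le M:=\sup_k P(A_k,\rn)<\infty$. For each fixed $N\in\en$ the ball $B(0,N)$ is a bounded Lipschitz domain, $\|u_k\|_{L^1(B(0,N))}\le|B(0,N)|$, and $|Du_k|(B(0,N))\le M$, so $\sup_k\|u_k\|_{BV(B(0,N))}<\infty$ and \cite[Theorem~3.23]{AFP} provides a subsequence converging in $L^1(B(0,N))$. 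A diagonal argument over $N=1,2,\dots$ then yields a single subsequence $(u_{k_m})$ and a function $u\in L^1_{\loc}(\rn)$ with $u_{k_m}\to u$ in $L^1_{\loc}(\rn)$.

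Next I would argue that $u$ is, up to a null set, the characteristic function of a measurable set. Passing to a further subsequence we may assume $u_{k_m}\to u$ a.e.; since each $u_{k_m}$ takes only the values $0$ and $1$, so does $u$ a.e., whence $u=\chi_A$ with $A:=\{x:u(x)=1\}$. The convergence $u_{k_m}\to\chi_A$ in $L^1_{\loc}(\rn)$ says exactly that $|(A_{k_m}\triangle A)\cap G|\to0$ for every bounded $G\subseteq\rn$, i.e.\ $A_{k_m}\to A$ locally in measure in $\rn$. Proposition~\ref{maggi} then applies and gives $P(A,\Omega)\le\liminf_{m\to\infty}P(A_{k_m},\Omega)$ for every open $\Omega\subseteq\rn$; choosing $\Omega=\rn$ shows $P(A,\rn)\le M<\infty$, so $A$ indeed has finite perimeter.

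For the final assertion, assume $A_k\subseteq B(0,R)$ for all $k$. Then $u_{k_m}=0$ a.e.\ on $\rn\setminus B(0,R)$, and $L^1_{\loc}$ convergence forces $\chi_A=0$ a.e.\ there, so after discarding a null set $A\subseteq B(0,R)$. Consequently $A_{k_m}\triangle A\subseteq B(0,R)$, hence $|A_{k_m}\triangle A|=|(A_{k_m}\triangle A)\cap B(0,R)|\to0$, which is convergence in measure in $\rn$ globally; and since $|A|\le|B(0,R)|<\infty$ this is the same as $\chi_{A_{k_m}}\to\chi_A$ in $L^1(\rn)$, as recalled in the preliminaries.

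The only points requiring care are organising the extraction so that one subsequence serves all scales simultaneously — handled by the diagonal argument above — and the elementary observation that an $L^1_{\loc}$ limit of characteristic functions is again a characteristic function; everything else is a direct appeal to \cite[Theorem~3.23]{AFP} and Proposition~\ref{maggi}. Note that the hypothesis $A_k\subseteq B(0,R)$ is genuinely needed for the global statement, since e.g.\ unit balls escaping to infinity have uniformly bounded perimeter yet do not converge in measure in $\rn$.
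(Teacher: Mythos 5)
Your argument is correct and is exactly the standard derivation that the paper itself delegates to \cite[Remark~3.37 and Theorem~3.23]{AFP}: local $BV$ compactness on an exhaustion by balls plus a diagonal extraction, the observation that an $L^1_{\loc}$ limit of characteristic functions is a characteristic function, lower semicontinuity via Proposition~\ref{maggi}, and the containment argument for the global statement. No gaps; this matches the intended proof.
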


The following isoperimetric inequality for sets of finite perimeter is \cite[Theorem~3.46]{AFP}.

\begin{thm}\label{isoperimetric}
Let $A\subseteq \rn$ be a bounded set of finite perimeter. 
Then
$$
|A|^{\frac{n-1}{n}}\leq C P(A,\mathbb{R}^n). 
$$
\end{thm}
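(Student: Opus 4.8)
The plan is to deduce this estimate from the classical isoperimetric inequality for domains with smooth boundary by an approximation argument; equivalently, one may derive it from the Sobolev embedding $BV(\rn)\hookrightarrow L^{n/(n-1)}(\rn)$. Concretely, the claimed inequality is equivalent to $\|u\|_{L^{n/(n-1)}(\rn)}\le C\,|Du|(\rn)$ for all $u\in BV(\rn)$: taking $u=\chi_A$ and using $\|\chi_A\|_{L^{n/(n-1)}}=|A|^{(n-1)/n}$ and $|D\chi_A|(\rn)=P(A,\rn)$ gives exactly the theorem (and only this direction is needed). So I would first prove this $BV$-Sobolev inequality. For $u\in C_c^\infty(\rn)$ it is the $p=1$ Gagliardo--Nirenberg--Sobolev inequality, which I would establish by writing $|u(x)|\le\int_{\er}|\partial_i u|\,dt$ in each of the $n$ coordinate directions, multiplying these bounds, and integrating one variable at a time via the generalized H\"older inequality (all exponents equal to $n-1$), obtaining $\|u\|_{L^{n/(n-1)}}\le\prod_{i=1}^n\|\partial_i u\|_{L^1}^{1/n}\le\frac1n\|\nabla u\|_{L^1}$. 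For general $u\in BV(\rn)$ --- here $\chi_A\in L^1$ since $A$ is bounded --- I would mollify, $u_\epsilon=u*\rho_\epsilon$, use $u_\epsilon\to u$ in $L^1$ together with $\int_{\rn}|\nabla u_\epsilon|\to|Du|(\rn)$, pass to an a.e.\ convergent subsequence, and conclude by Fatou's lemma.

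Alternatively --- and more in keeping with the language of sets of finite perimeter --- I would approximate $A$ itself: set $v_\epsilon:=\chi_A*\rho_\epsilon\in C^\infty$ with $0\le v_\epsilon\le1$; by the coarea formula $\int_0^1\haus^{n-1}(\{v_\epsilon=t\})\,dt=\int_{\rn}|\nabla v_\epsilon|\le|D\chi_A|(\rn)+o(1)$, so by a Chebyshev selection in $t$ (combined with Sard's theorem, so that $\{v_\epsilon>t\}$ has smooth boundary, and using $v_\epsilon\to\chi_A$ in $L^1$ to keep the symmetric difference small) there are levels $t_\epsilon\in(\tfrac14,\tfrac34)$ for which the bounded smooth sets $A_k:=\{v_{\epsilon_k}>t_{\epsilon_k}\}$ satisfy $|A_k|\to|A|$ and $\limsup_k\haus^{n-1}(\partial A_k)\le P(A,\rn)$. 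For these smooth bounded sets the classical isoperimetric inequality $|A_k|^{(n-1)/n}\le C\,\haus^{n-1}(\partial A_k)$ holds (take it as known, e.g.\ via Brunn--Minkowski or Schwarz symmetrization, with a dimensional constant $C=C(n)$; sharpness is not needed here), and letting $k\to\infty$ gives $|A|^{(n-1)/n}\le C\,P(A,\rn)$.

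The main obstacle is the perimeter side of the approximation: one needs the surface measures of the smooth approximants to converge \emph{up to} $P(A,\rn)$, not merely to stay bounded, and this relies precisely on $\int_{\rn}|\nabla v_\epsilon|\le|D\chi_A|(\rn)+o(1)$ for mollifications together with a coarea/Fubini argument choosing a level $t$ at which, simultaneously, $\haus^{n-1}(\partial\{v_\epsilon>t\})$ is nearly minimal and $|\{v_\epsilon>t\}\,\triangle\,A|$ is small. If one instead runs the $BV$-Sobolev route of the first paragraph, the corresponding key point is the elementary but slightly delicate iterated-H\"older proof of the $p=1$ Gagliardo--Nirenberg--Sobolev inequality; the remaining steps in either route are routine.
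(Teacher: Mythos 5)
Your argument is correct, but note that the paper does not prove this statement at all: Theorem \ref{isoperimetric} is quoted verbatim as a known preliminary, with a citation to \cite[Theorem~3.46]{AFP}, and only the non-sharp form with an unspecified dimensional constant $C$ is used later (in the estimate of $|A(f_k)_-|$). Your first route --- reducing to the $BV$-Sobolev inequality $\|u\|_{L^{n/(n-1)}(\rn)}\le C\,|Du|(\rn)$, proving it for $C_c^\infty$ functions by the iterated-H\"older Gagliardo--Nirenberg--Sobolev argument, and then passing to $\chi_A$ by mollification (here $\chi_A*\rho_\epsilon\in C_c^\infty$ since $A$ is bounded, $\int|\nabla(\chi_A*\rho_\epsilon)|\le|D\chi_A|(\rn)=P(A,\rn)$, and Fatou along an a.e.\ convergent subsequence closes the argument) --- is exactly the standard textbook proof and is complete as sketched. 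Your second route (smooth level sets of mollifications via coarea, Sard, and Chebyshev selection, then the classical smooth isoperimetric inequality) also works, though as written the restriction of the level $t$ to $(\tfrac14,\tfrac34)$ only yields $\limsup_k\haus^{n-1}(\partial A_k)\le 2P(A,\rn)$ rather than $\le P(A,\rn)$; this is harmless here since only a dimensional constant is claimed, and it could be repaired by a Fatou-in-$t$ selection of a fixed regular level if one wanted the tight bound. In short: the proposal is a valid self-contained proof of a result the paper simply imports from the literature, and your observation that sharpness of the constant is irrelevant matches how the paper uses the inequality.
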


\section{Weak limits of homeomorphisms}

We start this section by proving a generalization of Theorem \ref{BHZthm}. 

\begin{thm}\label{modBHZ}
	Let $n\geq 2$, $\Omega\subseteq \rn$ be a domain and let $p>\left\lfloor\frac{n}{2}\right\rfloor$ for $n\geq 4$ or $p\geq 1$ for $n=2,3$. Let $f_k\in W^{1,p}(\Omega,\rn)$ be a sequence of functions from $\mathcal{H}^{1,p}_{c,f_0}$ such that $f_k\rightharpoonup f$ weakly in $W^{1,p}(\Omega,\rn)$ and assume that $J_f\geq 0$ a.e. 
	
	Then $f$ is injective a.e. on $G:=\{x\in\Omega:J_f(x)>0\}$, i.e., there is a set $N\subseteq G$ with $|N|=0$ such that $f|_{G\setminus N}$ is injective. 
\end{thm}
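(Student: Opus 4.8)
The plan is to follow the proof of Theorem~\ref{BHZthm} from \cite{BHZ}, adapting it in two places: the maps $f_k$ are homeomorphisms only on $\Omega$ minus a finite set, and only $J_f\ge0$ is assumed, so injectivity is asserted on $G$ alone. First I would reduce to a fixed subdomain: it suffices to prove that $f$ is injective a.e.\ on $V\cap G$ for every subdomain $V\subset\subset\Omega$, since then, exhausting $\Omega=\bigcup_j V_j$ by such subdomains and setting $N=\bigcup_j N_j$ (with $N_j$ the exceptional null set of $V_j$), any two distinct points of $G\setminus N$ already lie in a common $V_j$ and so have distinct images. Fix $V$ and suppose, for contradiction, that $f$ is not injective a.e.\ on $V\cap G$. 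A standard measurable selection argument then yields disjoint positive-measure Borel sets $Z_1,Z_2\subseteq V\cap G$, a measurable bijection $\sigma\colon Z_1\to Z_2$ with $f\circ\sigma=f$, and (since $f$ is approximately differentiable a.e.\ and $\det Df>0$ on $G$) density points $a\in Z_1$, $b:=\sigma(a)\in Z_2$ at which $f$ is approximately differentiable with invertible derivative and $f(a)=f(b)=:y_0$. That $J_f>0$ is genuinely used here is why the statement cannot promise injectivity outside $G$.

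The contradiction is then obtained, exactly as in \cite{BHZ}, from a topological (linking/degree) invariant attached to the restrictions of the maps to suitable slices of dimension $\lfloor n/2\rfloor$ near $a$ and near $b$, together with the bounding disks of these slices; all these objects are drawn from a family parametrised by centres, orientations and radii. The hypothesis $p>\lfloor n/2\rfloor$ (and $p\ge1$ when $n=2,3$) is precisely what makes $p$ exceed the dimension of the slices, so that the Sobolev traces of $f$ and of each $f_k$ onto them have continuous representatives. Using that $f_k\to f$ strongly in $L^p_{\loc}$ (Rellich) and that $\{f_k\}$ is bounded in $W^{1,p}$, a Fubini argument combined with Morrey's embedding lets me pass to a subsequence along which, for a fixed \emph{generic} choice of the slices, $f_k\to f$ uniformly on them. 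On one hand, because $f$ is approximately affine and nondegenerate at $a$ and $b$, for a suitable choice of the slices the invariant of $f$ takes a value different from its ``homeomorphic'' one; on the other hand, for each homeomorphism $f_k$ the invariant equals the homeomorphic value, since it is computed from $f_k$ on the slices and on their bounding disks only, where the injective $f_k$ behaves just as a homeomorphism does. Passing to the limit along the subsequence --- the invariant being integer-valued and stable under uniform perturbations that keep the relevant sets disjoint --- yields the contradiction.

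The one genuinely new ingredient is that each $f_k$ is a homeomorphism only on $\Omega\setminus F_k$, with $F_k=\{x_1^k,\dots,x_{m_k}^k\}$ finite. The set $\bigcup_k F_k$ is countable, and for a fixed point the parameters whose slice or bounding disk passes through that point form a set of measure zero in the parameter space --- here it matters that these slices and disks lie in affine subspaces of dimension at most $\lfloor\frac{n-1}{2}\rfloor+1\le n-1$, hence of positive codimension in $\rn$. Consequently the parameters whose slices or disks meet some point of $\bigcup_k F_k$ form a null set, so a generic choice of slices avoids \emph{every} $F_k$ simultaneously; on such slices each $f_k$ is a genuine homeomorphism and the evaluation of its invariant above is unaffected. (For $n=2$ the linking number is replaced by a winding number about $y_0$, which in any case does not detect cavity points lying inside a disk, since those cavities are removed from the image of $f_k$.)

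The principal difficulty is the one inherited from \cite{BHZ}: constructing the topological invariant and evaluating it when only approximate differentiability of $f$ and only $L^p$-control of its traces on lower-dimensional slices are available --- this is what forces the slice dimension $\lfloor n/2\rfloor$ and the delicate selection of generic slices. By comparison, incorporating the cavities is routine, resting only on the observations that for each $k$ the exceptional set is finite, that $\bigcup_k F_k$ is therefore countable, and that a positive-codimension slice generically misses a countable set of points.
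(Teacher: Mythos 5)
Your proposal follows essentially the same route as the paper for $n\geq 3$: adapt the linking argument of \cite{BHZ}, restrict the non-injectivity points to density points of $G$ (this is where $J_f>0$ enters), use $p>\lfloor n/2\rfloor$ to get continuous, uniformly convergent traces on generic low-dimensional slices, choose the links \emph{and} their spanning disks generically so that they avoid the countable set $\bigcup_k\operatorname{Cav}(f_k)$ (possible by positive codimension), and derive the contradiction from the injectivity of each $f_k$ on the union of the chosen links and spanning surfaces --- exactly the modifications the paper lists. The only divergence is the planar case: the paper dispatches $n=2$ in one line by noting that the $f_k$ satisfy the $(INV)$ condition and that this condition is stable under weak limits, whereas you propose to adapt the winding-number version of the slicing argument directly; this can be made to work, but note that in the plane the ``bounding disk'' has full dimension and cannot generically avoid the cavity points, so the contradiction must be run through the degree/topological-image disjointness for the two disks (which is precisely the $(INV)$-type reasoning the paper invokes), not through a spanning surface avoiding $\bigcup_k\operatorname{Cav}(f_k)$. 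With that caveat made explicit, your argument matches the paper's.
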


\begin{proof}
In the planar case, the injectivity a.e. follows from the fact that weak limits of mappings satisfying the $(INV)$ condition also satisfy it. Our mappings $f_k$ satisfy the $(INV)$ condition as we know that $f_k$ equals to a homeomorphism $f_0$ on $\partial \Omega$, $f_k(\Omega)\subseteq f_0(\Omega)$ and $f_k$ itself is a homeomorphism on $\Omega\setminus \operatorname{Cav}(f_k)$.

Further we assume that $n\geq 3$ and then the proof is analogous to the proof of Theorem \ref{BHZthm}, so we mark here the changes necessary to modify the proof instead of rewriting it in its full extent.

The first change needed is in \cite[Lemma 3.1]{BHZ}. Let us denote $\tilde{G}$ the set of points from $G$ which are of density $1$ in $G$. One needs to replace the cubes $Q_k$ by $Q_k\cap \tilde{G}$ (and discard the cubes where $|Q_k\cap \tilde{G}|=0$). Then every two different points $x,y\in \tilde{G}$ can be distinguished by some set from the cover. The rest of the proof of the Lemma follows.

Now we focus on modifications in the proof of Theorem \ref{BHZthm} itself.

In Step 1, the sets $A$ and $B$ are subsets of $G$.

In Step 2a, we denote $\operatorname{Cav}(f_k)=\{x_1^k,\dots,x_{m_k}^k\}$ the set of points of discontinuity of $f_k$, and we may assume that $A$ and $B$ do not intersect $\operatorname{Cav}(f_k)$ for any $k$.

Note that in Step 2c, the authors assume implicitly that $r$ is small enough for $Q(a,r)$ and $Q(b,r)$ to be subsets of $\Omega$.

In Step 3a, we select $(x_2,r)$ such that neither the corresponding link $L_{x_2,r}$ nor the set
$$
S_{x_2,r}:=\left\{(x_1,x_2,x_3):\|(x_1,x_3)-\left(\frac{-2}{18},0\right)\|_\infty <r\right\},
$$
i.e., the square surface ``inside the link'', intersect $\cup_k \operatorname{Cav}(f_k)$. This is possible as the set is only countable. Analogously we select $x_3$ in Step 3b so that the link $\hat{L}_{x_3,r}$ and the square surface
$$
\hat{S}_{x_3,r}:=\left\{(x_1,x_2,x_3):\|(x_1,x_2)-\left(\frac{2}{18},0\right)\|_\infty <r\right\}
$$
does not intersect the set of points of discontinuity.

In Step 4, we fix $k>\max \{k_a,k_b\}$. Since $f_k(L_{x_2,r})$ and $f_k(\hat{L}_{x_3,r})$ are linked, the curve $f_k(\hat{L}_{x_3,r})$ has to intersect any surface which has $f_k(L_{x_2,r})$ as its boundary. However, $f_k(S_{x_2,r})$ is such surface (as $f_k$ is a homeomorphism there), which gives us the desired contradiction, as $f_k$ is a homeomorphism on
$$
L_{x_2,r}\cup S_{x_2,r} \cup \hat{L}_{x_3,r} \cup \hat{S}_{x_3,r}
$$
and therefore has to be injective there.

In Step 6, our surfaces $S_{x_2,r}$ and $\hat{S}_{x_3,r}$ are replaced by higher-dimensional surfaces.
\end{proof}

\begin{thm}\label{LSConHomThm}
	Let $p>\lfloor\frac{n}{2}\rfloor$, let $\Omega\subseteq \rn$ be a bounded domain and let $f_0$ be a homeomorphism from $\overline{\Omega}$ into $\rn$ which satisfies the Lusin $(N)$ condition, $E(f_0)<\infty$ and $|f_0(\partial\Omega)|=0$. It holds that
	$$
		E(f) \leq \liminf_{k\to\infty} E(f_k)
	$$
	for any sequence $f_k\in \mathcal{H}_{f_0}^{1,p}$ such that  
	$f_k \rightharpoonup f$ in $W^{1,p}(\Omega, \rn)$. 
\end{thm}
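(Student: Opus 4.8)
The plan is to bound the two parts of the energy \eqref{EnergyDef} separately and then add them, using the superadditivity $\liminf_k(a_k+b_k)\ge\liminf_k a_k+\liminf_k b_k$. The term $\int_\Omega|Df|^p\,dx$ is routine: $M\mapsto|M|^p$ is convex on matrices and $Df_k\rightharpoonup Df$ weakly in $L^p(\Omega,\rn)$, hence weakly in $L^1$ since $\Omega$ is bounded, so Theorem~\ref{LSC} with $W(x,M)=|M|^p$ gives $\int_\Omega|Df|^p\,dx\le\liminf_k\int_\Omega|Df_k|^p\,dx$. Thus everything reduces to proving
$$
\int_\Omega\varphi(J_f)\,dx\le\liminf_{k\to\infty}\int_\Omega\varphi(J_{f_k})\,dx .
$$

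To this end, pass to a subsequence (not relabelled) realising the $\liminf$, which we may assume finite, so $\sup_k\int_\Omega\varphi(J_{f_k})<\infty$ and, by \eqref{varphi}, $J_{f_k}>0$ a.e.; thus each $f_k$ is a homeomorphism with $J_{f_k}>0$ a.e.\ satisfying the Lusin $(N)$ condition, and the area formula gives $\int_A J_{f_k}\,dx=|f_k(A)|$ for every measurable $A\subseteq\Omega$. By Lemma~\ref{l:reverse} we have the uniform bounds $\Phi(|A|)\le|f_k(A)|\le\Psi(|A|)$; the upper one reads $\int_A J_{f_k}\,dx\le\Psi(|A|)$, so $\{J_{f_k}\}_k$ is equiintegrable, and by Dunford--Pettis we may pass to a further subsequence with $J_{f_k}\rightharpoonup\theta$ weakly in $L^1(\Omega)$ for some $\theta\ge0$ obeying $\int_A\theta\,dx=\lim_k|f_k(A)|$ for every measurable $A$. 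By lower semicontinuity of convex integral functionals under weak $L^1$ convergence (an extension of Theorem~\ref{LSC} to the convex integrand $\varphi$, via finite convex minorants and passing to the supremum) we obtain $\int_\Omega\varphi(\theta)\,dx\le\liminf_k\int_\Omega\varphi(J_{f_k})\,dx<\infty$. Hence it suffices to show $\theta=J_f$ a.e.

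The identification $\theta=J_f$ a.e.\ is the crux of the argument and the step I expect to be the main obstacle; it is here that the a.e.\ injectivity of $f$ from Theorem~\ref{modBHZ} is indispensable and that, as the introduction warns, one must work with $|f(B\setminus N)|$ rather than $|f(B)|$. I would split it as follows. \emph{Step 1: $J_f\le\theta$ a.e.} Fix $x_0$ that is a density point of $G:=\{J_f>0\}$, a Lebesgue point of $\theta$ and of $J_f$, and a point of approximate differentiability of $f$ with $J_f(x_0)>0$ (a.e.\ point of $G$ is such). For every $\varepsilon>0$ and all small $r$ one has $|f(x)-f(x_0)-Df(x_0)(x-x_0)|\le\varepsilon r$ for all $x\in B(x_0,r)$ outside a set of measure $o(r^n)$, so up to this small exceptional set and an $\varepsilon r$-error $f$ maps $B(x_0,r)$ into an ellipsoid of volume $J_f(x_0)\,|B(x_0,r)|$. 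Combining this with $f_k\to f$ a.e.\ (after a further subsequence), with the fact that the $f_k$ are homeomorphisms satisfying $(N)$, and — crucially — with the a.e.\ injectivity of $f$ on $G$ (which prevents the approximately affine image from folding onto a smaller set), an Egorov-type covering argument yields $\liminf_k|f_k(B(x_0,r))|\ge(1-\varepsilon)J_f(x_0)\,|B(x_0,r)|$ for all small $r$; dividing by $|B(x_0,r)|$, letting $r\to0$, and using $\int_{B(x_0,r)}\theta\,dx=\lim_k|f_k(B(x_0,r))|$ gives $\theta(x_0)\ge J_f(x_0)$.

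\emph{Step 2: $\int_\Omega\theta\,dx=\int_\Omega J_f\,dx$.} On the one hand $\int_\Omega\theta\,dx=\lim_k|f_k(\Omega)|=|f_0(\Omega)|$, because each $f_k$ is a homeomorphism of $\overline\Omega$ onto $\overline{f_0(\Omega)}$ satisfying $(N)$. On the other hand, choosing a null set $N$ outside of which $f$ is injective, approximately differentiable, and satisfies the area formula (using $J_f\ge0$ a.e.\ from Theorem~\ref{HOthm}), one has $\int_\Omega J_f\,dx=\int_{G\setminus N}J_f\,dx=|f(G\setminus N)|\le|f_0(\Omega)|$, and the reverse inequality follows from the fact that every $f_k$ maps onto all of $f_0(\Omega)$ together with the uniform estimate $|f_k^{-1}(E)|\le\Phi^{-1}(|E|)$ coming from \eqref{reverse}, which forces $f$ to cover almost every point of $f_0(\Omega)$ through $G\setminus N$. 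Combining Steps 1 and 2 gives $\theta=J_f$ a.e.\ (and, as $\int_\Omega\varphi(\theta)<\infty$, also $J_f>0$ a.e.), hence $\int_\Omega\varphi(J_f)\,dx=\int_\Omega\varphi(\theta)\,dx\le\liminf_k\int_\Omega\varphi(J_{f_k})\,dx$; adding this to the bound for $\int_\Omega|Df|^p\,dx$ finishes the proof.
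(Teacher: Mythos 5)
Your overall skeleton is the same as the paper's: extract $J_{f_k}\rightharpoonup\theta$ weakly in $L^1$ (equiintegrability from the $\varphi$-bound plus Dunford--Pettis), use convex lower semicontinuity, and reduce everything to the identification $\theta=J_f$ a.e.; your Step 2 (matching total masses, so that $\theta\ge J_f$ together with $\int_\Omega\theta=\int_\Omega J_f$ forces equality) is a legitimate variant of the paper's ``$\limsup$'' direction. The genuine gap is in Step 1, which is exactly the crux you flag. The inequality $\liminf_k|f_k(B(x_0,r))|\ge(1-\varepsilon)J_f(x_0)|B(x_0,r)|$ does not follow from the ingredients you list (approximate differentiability of $f$, $f_k\to f$ a.e., $f_k$ homeomorphisms satisfying $(N)$, a.e.\ injectivity of $f$ on $G$): all of these hold in Mal\'y's counterexample, where $f=\operatorname{id}$ is even globally injective and smooth, and yet $|f_k(\Omega)|\to 0$. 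The obstruction is not that ``the approximately affine image folds onto a smaller set'' --- injectivity of $f$ says nothing directly about $|f_k(B)|$ --- but that $f_k$ could squeeze $B$ into a set of tiny measure while the part of $f(B\setminus N)$ thus missed is covered by $f_k$-images of points \emph{outside} $B$. Ruling this out is precisely where the uniform energy bound must enter quantitatively: if a compact $K\subseteq f(B\setminus N)\setminus[f(M)\cup f(\partial\Omega)]$ with $|K|>\epsilon$ is missed by $f_{k_m}(B)$, then (since $f_{k_m}(\overline\Omega)=f_0(\overline\Omega)$) $K$ is covered by $f_{k_m}(\Omega\setminus B)$ up to a null set, and the uniform estimate \eqref{reverse2} gives $|f_{k_m}^{-1}(K)\setminus B|\ge\Psi^{-1}(\epsilon)$ for all $m$; taking $x$ in the limsup of these sets, off the null set $N$, pointwise convergence gives $f(x)\in K\subseteq f(B\setminus N)\setminus f(M)$, and only now does a.e.\ injectivity of $f$ yield the contradiction $x\in B$. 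None of this mechanism appears in your ``Egorov-type covering argument'', so Step 1 is an assertion rather than a proof. (Once you run this contradiction argument, the approximate-differentiability detour is unnecessary: it gives $\liminf_k|f_k(B)|\ge|f(B\setminus N)|=\int_B J_f$ directly for every ball, hence $\theta\ge J_f$ a.e.)

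A second, smaller slip: in Step 2 you invoke $|f_k^{-1}(E)|\le\Phi^{-1}(|E|)$ from \eqref{reverse} to ``force coverage''. That estimate bounds preimages from \emph{above} and cannot force anything; what you need is again the lower bound $|f_k^{-1}(K)|\ge\Psi^{-1}(|K|)$ coming from \eqref{reverse2} (valid because the $f_k$ satisfy $(N)$ and $\int_\Omega\varphi(J_{f_k})$ is uniformly bounded), applied to a compact $K\subseteq f_0(\Omega)\setminus f(\Omega\setminus N)$ of positive measure, followed by the same limsup/pointwise-convergence argument. With that correction your Step 2 is sound, and combined with a repaired Step 1 the proof goes through along essentially the paper's lines.
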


\begin{proof}
	\step{1}{Identify a function $J$ to which $\det Df_k$ ought to converge}{InitialSetup}

Without loss of generality we take some $f_k\in \mathcal{H}_{f_0}^{1,p}$ such that $E(f_k)$ is bounded and $f_k \rightharpoonup f$ in $W^{1,p}$. The weak convergence of $f_k$ in $W^{1,p}(\Omega,\rn)$ for $p>1$ implies that $f_k \to f$ strongly in $L^p(\Omega,\rn)$. Because $\det Df_k$ is bounded in some super-linear Orlicz space, a combination of the De la Vallée Poussin and the Dunford-Petis theorems implies (after taking a subsequence) that $\det Df_k$ converge weakly in $L^1(\Omega)$ to some function $J$. Then Theorem \ref{LSC} (for $u(x)=(u_1(x),u_2(x))=(Df(x),\det Df(x))$ and $W(x,u)=|u_1|^p+\varphi(u_2)$)  implies that
$$
	\int_{\Omega}|Df(x)|^p+ \varphi(J(x)) dx \leq \liminf_{k\to\infty} \int_{\Omega} |Df_k(x)|^p + \varphi(\det Df_k) dx.
$$ 
It now remains to prove that $\det Df = J$ almost everywhere on $\Omega$.

\step{2}{Show that $|f(B\setminus N)| = \int_B\det Df$}{AreaFormulaStep}

Before appealing to the area formula let us note that Theorem~\ref{HOthm} implies that $|\det Df| = \det Df$ a.e. on $\Omega$. 
Our sequence is converging weakly in $W^{1,p}$ and thus strongly in $L^p$ so we may assume without loss of generality that there is a zero measure set $N_0\subseteq \Omega$ such that (for a subsequence that we denote the same)
$$
f_k(x)\to f(x)\text{ pointwise on }\Omega\setminus N_0.
$$ 
It is well-known (see e.g. \cite{Ha}) that there exists a zero measure set $N_1\subseteq \Omega$ such that the area formula holds on $\Omega \setminus N_1$, i.e.,
$$
		\int_{\rn}\mathcal{H}^0(\{x\in \Omega\setminus N_1, f(x) = y\}) \, dy= \int_{\Omega\setminus N_1} \det Df(x)\, dx =  \int_{\Omega} \det Df(x)\, dx.
$$
By restricting this formula to the set $M =\{x\in \Omega \setminus N_1; \det Df(x) = 0\}$ we observe that $|f(M)|=0$. By Theorem~\ref{modBHZ} for $f_{\rceil \Omega\setminus M}$ we find a set $N_2\subseteq \Omega\setminus M$ of zero measure such that for every $x\in \Omega \setminus [M\cup N_2]$ we have 
$$
(\tilde{f}^{-1}\circ \tilde{f}(x))= \{x\}, \text{ where }
\tilde{f}:=f_{\rceil \Omega\setminus [M\cup N_2]}. 
$$
We denote 
\begin{equation}\label{ThisIsHowWeDefineN}
	N:= N_0\cup N_1\cup N_2. 
\end{equation}
Then for an arbitrary ball $B\subseteq \Omega$ we have $\mathcal{H}^0(\{x\in B\setminus N, f(x) = y\})= 1$ for all $y\in f(B\setminus N)\setminus f(M)$, giving
$$	\begin{aligned}
		|f(B\setminus N)| &= |f(B\setminus N)\setminus f(M)| =\int_{f(B \setminus N)\setminus f(M)} 1\, dy\\
		& =\int_{f(B \setminus N)} 1\, dy = \int_{B\setminus N} \det Df(x) \, dx=\int_{B} \det Df(x) \, dx.
	\end{aligned}
$$

On the other hand (by weak convergence in $L^1$) we have that
$$
	\int_{B}J(x) \, dx = \lim_{k\to\infty}\int_{B}\det Df_k(x) \, dx = \lim_{k\to\infty}|f_k(B)|.
$$
Therefore, we have $J=\det Df$ on the intersection of their Lebesgue points, if we prove that  
$$
|f(B\setminus N)| = \lim_{k\to\infty}|f_k(B)|\text{ for an arbitrary ball }B.
$$ 
The rest of the proof is dedicated to showing this fact.

\step{3}{Prove that $\limsup_{k\to\infty}|f_k(B)|\leq |f(B\setminus N)| $}{FirstEstimateStep}

We claim that $\limsup_{k\to\infty} \bigl|f_k(B)\bigr|\leq |f(B\setminus N)|$. For a contradiction assume that there is a positive $\epsilon$ such that 
$$
\limsup_{k\to\infty}\big|  f_k(B)\setminus f(B\setminus N)\big|> \epsilon.
$$ 
By approximation we find a compact set
$$
K \subseteq f_0(\Omega)\setminus f(B\setminus N)  \text{ such that } \bigl|f_0(\Omega)\setminus 
\bigl(K\cup f(B\setminus N)\bigr)\bigr|<\frac{\epsilon}{2} 
$$
It follows that 
$$
\limsup_{k\to\infty} \big| f_k(B)\cap K\big|>\frac{\epsilon}{2}.
$$
Let $k_m$ be a subsequence such that $\big| f_{k_m}(B)\cap K\big|>\epsilon/2$. Then, by Lemma~\ref{l:reverse}, the sets
$$
S_m := B\cap f^{-1}_{k_m}(K )  \ \text{ satisfy }  \ |S_{m}|\geq \delta_{\epsilon},
$$
and so it holds that
$$
S = \bigcap_{j=1}^{\infty}\bigcup_{m=j}^{\infty} S_m \subseteq B \  \text{ and } \ |S|\geq  \delta_{\epsilon}.
$$

For every $x\in S$ we have the existence of a further subsequence $k_{m_j(x)}$ such that $f_{k_{m_j(x)}} (x)\in K$ and therefore all accumulation points of this sequence also belong in $K$ as $K$ is compact. On the other hand for all $x\in S \setminus N$ (of which there are many since $|S|>0$) we have $f_{k_{m_j(x)}}(x)\xrightarrow{j\to\infty}f(x)\in f(B\setminus N)$. Because $K\cap f(B\setminus N) = \emptyset$ we have a contradiction.

\step{4}{Prove that $ \liminf_{k\to\infty}|f_k(B)|\geq |f(B\setminus N)|$}{SecondEstimateStep}

We now aim to prove that $\liminf_{k\to\infty}|f_k(B)|\geq |f(B\setminus N)| $. For a contradiction, let us assume that there is an $\epsilon>0$ such that there is a subsequence $k_m$ 
$$
	|f(B\setminus N)\setminus f_{k_m}(B)|>\epsilon \ \text{ for all }  m\in \en.
$$
Note that 
$f(\overline{\Omega})\subseteq f_0(\overline{\Omega})=f_k(\overline{\Omega})$ and $|f(\partial\Omega)|=0$, therefore $ f^{-1}_k$ is well-defined on $f(B\setminus N)$ and we can fix a compact set
\begin{equation}\label{hura}
K\subseteq f(B\setminus N) \setminus [f(M) \cup f(\partial\Omega)] \ \text { such that } \ | K \setminus f_{k_m}(B)|>\epsilon.
\end{equation}
By Lemma \ref{l:reverse} we have the existence of a $\delta_{\epsilon}>0$ such that
 $$
 |A_m|\geq \delta_{\epsilon}, \ \text{ where } \ A_m:=f_{k_m}^{-1}(K)\setminus B,
 $$
 and therefore it holds that
 $$
 A:=\bigcap_{l=1}^{\infty}\bigcup_{m=l}^{\infty} A_{m}  \ \ \text{ satisfies } \ \ |A|\geq\delta_{\epsilon}.
 $$
 Let $x\in A\setminus N$, then there is a subsequence $k_{m_{j(x)}}$ such that $f_{k_{m_{j(x)}}}(x)\in K$ for all $j$. The fact that $f_{k_{m_{j(x)}}}(x)\xrightarrow{j\to\infty}f(x)$ implies that $f(x)\in K$. Since $K\subseteq f(B\setminus N)\setminus f(M)$, we have $f = \tilde{f}$ on $\tilde{f}^{-1}(K)$ and then the above proven injectivity of $\tilde{f}$ on $\tilde{f}^{-1}(K)$ (see step~\ref{AreaFormulaStep}) implies that $x\in B\setminus N$. On the other hand, however, $A_m\cap B = \emptyset$ for all $m$ and so $x\in \Omega\setminus B$ which is a contradiction.
\end{proof}

In the proof above we have not only proved that $|f_k(B)|\to|f(B\setminus N)|$ but even that the measure of the symmetric difference tends to zero. 

\begin{corollary}\label{JackOfAllTrades}
	Let $f$ and $f_k$ be as in Theorem~\ref{LSConHomThm} with $E(f_k)$ bounded, then $\det D f_k \rightharpoonup \det Df$ in $L^1(\Omega)$. Moreover (by testing with $\chi_{\Omega}$) we have $\|\det D f_k\|_{L^1(\Omega)} = \|\det D f\|_{L^1(\Omega)} = |f_0(\Omega)|$ for all $k$ and 
	$$
	\lim_{k\to \infty} |f(B\setminus N) \Delta f_k(B)|=0
	$$
	for every ball $B\subseteq \Omega$, where the set $N$ is from \eqref{ThisIsHowWeDefineN}.
	It also follows, since $E(f)<\infty$, that $|M|=0$ and $f$ is therefore injective a.e. 
\end{corollary}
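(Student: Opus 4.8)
The plan is to read the corollary off the proof of Theorem~\ref{LSConHomThm}, which already contains all of the substance; what remains is bookkeeping.

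First I would upgrade the subsequential convergence obtained in Step~\ref{InitialSetup} to convergence of the whole sequence. The uniform bound $\sup_k\int_\Omega\varphi(\det Df_k)<\infty$ together with the superlinearity \eqref{varphi2} of $\varphi$ makes $\{\det Df_k\}$ equiintegrable (De~la~Vall\'ee~Poussin plus Dunford--Pettis), hence relatively weakly sequentially compact in $L^1(\Omega)$. Thus every subsequence of $\{\det Df_k\}$ has a further subsequence converging weakly in $L^1(\Omega)$, and since along such a subsequence $f_{k_j}\rightharpoonup f$ still holds, Steps~\ref{InitialSetup}--\ref{SecondEstimateStep} of the proof above apply verbatim and force every such limit to equal $\det Df$ (this is exactly the identification $J=\det Df$ there). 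As the limit is always the same, the full sequence satisfies $\det Df_k\rightharpoonup\det Df$ in $L^1(\Omega)$.

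Next I would test this weak convergence against $\chi_\Omega$, obtaining $\int_\Omega\det Df_k\to\int_\Omega\det Df$. For each fixed $k$, finiteness of $E(f_k)$ forces $\det Df_k=J_{f_k}>0$ a.e.; since $f_k$ is a homeomorphism of $\overline\Omega$ satisfying the Lusin $(N)$ condition, the area formula gives $\int_\Omega\det Df_k=|f_k(\Omega)|$, and using $|f_k(\partial\Omega)|=|f_0(\partial\Omega)|=0$ together with the identity $f_k(\overline\Omega)=f_0(\overline\Omega)$ already recorded in Step~\ref{AreaFormulaStep} this equals $|f_0(\Omega)|$. Hence $\|\det Df_k\|_{L^1(\Omega)}=|f_0(\Omega)|$ for every $k$, and passing to the limit, together with $\det Df\ge 0$ a.e.\ (Theorem~\ref{HOthm}), yields $\|\det Df\|_{L^1(\Omega)}=\int_\Omega\det Df=|f_0(\Omega)|$ as well. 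The statement on symmetric differences is not new either: the contradiction argument of Step~\ref{FirstEstimateStep} in fact proves $|f_k(B)\setminus f(B\setminus N)|\to 0$ and that of Step~\ref{SecondEstimateStep} proves $|f(B\setminus N)\setminus f_k(B)|\to 0$, so $|f(B\setminus N)\triangle f_k(B)|\to 0$ for every ball $B\subseteq\Omega$, which is precisely the remark made just after the proof of Theorem~\ref{LSConHomThm}.

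Finally, for the nondegeneracy I would use that Step~\ref{InitialSetup} gives $E(f)=\int_\Omega\bigl(|Df|^p+\varphi(\det Df)\bigr)\le\liminf_k E(f_k)<\infty$, so $\int_\Omega\varphi(\det Df)<\infty$; since $\varphi(t)\to\infty$ as $t\to0^+$ by \eqref{varphi}, the set $M=\{x\in\Omega:\det Df(x)=0\}$ must be Lebesgue null. Then $\Omega\setminus[M\cup N_2]$ has full measure and $f$ restricted to it coincides with the injective map $\tilde f$ of Step~\ref{AreaFormulaStep}, so $f$ is injective a.e. I do not expect a genuine obstacle anywhere; the only point requiring a little care is the first step, namely promoting the subsequential weak $L^1$ limit to a limit of the entire sequence, which hinges precisely on the equiintegrability furnished by $\varphi$ and on the uniqueness of the limit established inside the proof of Theorem~\ref{LSConHomThm}.
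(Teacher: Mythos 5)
Your proposal is correct and matches the paper's intent exactly: the paper offers no separate argument for Corollary~\ref{JackOfAllTrades}, reading it off the proof of Theorem~\ref{LSConHomThm} just as you do (the subsequence--uniqueness argument for full weak $L^1$ convergence, testing with $\chi_\Omega$ plus the area formula for the norms, Steps~\ref{FirstEstimateStep}--\ref{SecondEstimateStep} for the symmetric difference, and $E(f)<\infty$ with \eqref{varphi} for $|M|=0$ and a.e.\ injectivity). The only slip is cosmetic: the identity $f_k(\overline\Omega)=f_0(\overline\Omega)$ is recorded in Step~\ref{SecondEstimateStep}, not Step~\ref{AreaFormulaStep}.
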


\begin{proof}[Proof of Theorem \ref{LSCClosure}]
Let $f\in W^{1,p}(\Omega, \rn)$ and $f_k \in \overline{\mathcal{H}_{f_0}^{1,p}}^w$ be such 	that $f_k \rightharpoonup f$ in $W^{1,p}(\Omega, \rn)$. Since $W^{1,p}(\Omega, \rn)$ is reflexive and separable (recall $p>\lfloor\tfrac{n}{2}\rfloor \geq 1$) we can find $L_i\in (W^{1,p}(\Omega, \rn))^*$ such that $\{L_i; i\in \en\}$ is dense. 
	By passing to a subsequence in $f_k$ if required, we can assume that
	$$
		|L_i(f_k-f)|<\frac{1}{k} \text{ for every }i\in\{1,\hdots,k\}. 
	$$
	
	For each $k\in \en$ we take a sequence $f_{k,m}\in\mathcal{H}_{f_0}^{1,p}$ such that $f_{k,m}\rightharpoonup f_k$ in $W^{1,p}(\Omega,\rn)$ and $E(f_{k,m})\leq C$ for all $k$ and $m$. 
	We may choose these sequences in such a way that they satisfy
	$$
	|L_i(f_{k,k}-f_k)|<\frac{1}{k} \text{ for every }i\in\{1,\hdots,k\}. 
	$$
	It follows that for every $i\in\en$ we have
	$$
	\lim_{k\to\infty} L_i(f_{k,k})=L_i(f). 
	$$
	Since $L_i$ are dense and $\int |Df_{k,k}|^p\leq E(f_{k,m})\leq C$ 
	this implies that $L(f_{k,k})\to L(f)$ for every $L\in (W^{1,p}(\Omega, \rn))^*$. Hence $f_{k,k}\rightharpoonup f$ in $W^{1,p}(\Omega, \rn)$ and therefore $f\in \overline{\mathcal{H}_{f_0}^{1,p}}^w$.

 	Corollary~\ref{JackOfAllTrades} implies for the sequence $f_{k,k}$ that $\det D f_k\rightharpoonup \det Df$ in $L^1(\Omega)$. Therefore we can apply Theorem~\ref{LSC} similarly as in the proof of Theorem~\ref{LSConHomThm} to conclude our proof.
	\end{proof}
	
\begin{corollary}\label{WeakJackOnClosure}
	Let $f$ and $f_k$ be as in Theorem~\ref{LSCClosure}, then $\det D f_k \rightharpoonup \det Df$ in $L^1(\Omega)$. Moreover $\|\det D f_k\|_{L^1(\Omega)} = \|\det D f\|_{L^1(\Omega)} = |f_0(\Omega)|$ for all $k$.
\end{corollary}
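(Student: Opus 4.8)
The strategy is to deduce everything from Corollary~\ref{JackOfAllTrades} by a diagonal argument in the spirit of the proof of Theorem~\ref{LSCClosure}; the one new feature is that the diagonalising homeomorphisms must be chosen close to the $f_k$ not only weakly in $W^{1,p}$ but also so that the integrals of their Jacobians over a fixed countable family of balls nearly agree with those of $\det Df_k$.

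I would start from the easy consequences of Corollary~\ref{JackOfAllTrades}. Each $f_k\in\overline{\mathcal{H}_{f_0}^{1,p}}^w$ is a weak $W^{1,p}$ limit of homeomorphisms $f_{k,m}\in\mathcal{H}_{f_0}^{1,p}$ with $E(f_{k,m})\le C$; applying Corollary~\ref{JackOfAllTrades} to the sequence $(f_{k,m})_m$ gives $\det Df_{k,m}\rightharpoonup\det Df_k$ in $L^1(\Omega)$, $\det Df_k>0$ a.e., and $\|\det Df_k\|_{L^1(\Omega)}=|f_0(\Omega)|$. Moreover, by lower semicontinuity (Theorem~\ref{LSConHomThm}) $E(f_k)\le C$, so $\sup_k\int_\Omega\varphi(\det Df_k)<\infty$ and hence, by \eqref{varphi2} and the De la Vall\'ee Poussin criterion, the family $\{\det Df_k\}_k$ is bounded and equiintegrable in $L^1(\Omega)$; in particular it is weakly precompact there by the Dunford--Pettis theorem.

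It therefore suffices to show that $\det Df$ is the only possible weak $L^1$ limit of a subsequence of $\{\det Df_k\}$; the convergence of the whole sequence then follows by the standard subsequence argument. So suppose $\det Df_{k_j}\rightharpoonup J$ in $L^1(\Omega)$, and note that still $f_{k_j}\rightharpoonup f$ in $W^{1,p}$. Fix a countable dense set $\{L_i\}\subseteq(W^{1,p}(\Omega,\rn))^*$ and an enumeration $\{B_i\}$ of all balls with rational centre and radius whose closure lies in $\Omega$. Using that $f_{k_j,m}\rightharpoonup f_{k_j}$ and $\det Df_{k_j,m}\rightharpoonup\det Df_{k_j}$ as $m\to\infty$, for each $j$ I choose $m(j)$ so large that the homeomorphism $g_j:=f_{k_j,m(j)}\in\mathcal{H}_{f_0}^{1,p}$ satisfies
$$
|L_i(g_j-f_{k_j})|<\tfrac1j\quad\text{and}\quad\Bigl|\int_{B_i}\det Dg_j-\int_{B_i}\det Df_{k_j}\Bigr|<\tfrac1j\qquad\text{for all }i\le j.
$$
Since $g_j$ is bounded in $W^{1,p}$ (we have $\int_\Omega|Dg_j|^p\le C$ and $g_j=f_0$ on $\partial\Omega$) and $f_{k_j}\rightharpoonup f$, the first estimates give $L_i(g_j)\to L_i(f)$ for every $i$ and hence $g_j\rightharpoonup f$ in $W^{1,p}(\Omega,\rn)$. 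Now Corollary~\ref{JackOfAllTrades} applies to $(g_j)_j$ and yields $\det Dg_j\rightharpoonup\det Df$ in $L^1(\Omega)$, so $\int_{B_i}\det Dg_j\to\int_{B_i}\det Df$; combining this with the second estimates and with $\det Df_{k_j}\rightharpoonup J$ we obtain $\int_{B_i}J=\int_{B_i}\det Df$ for every $i$, and the Lebesgue differentiation theorem forces $J=\det Df$ a.e. This proves $\det Df_k\rightharpoonup\det Df$ in $L^1(\Omega)$; testing against $\chi_\Omega$ and using $\det Df_k,\det Df\ge0$ then gives $\|\det Df\|_{L^1(\Omega)}=\lim_k\|\det Df_k\|_{L^1(\Omega)}=|f_0(\Omega)|$.

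The only delicate point is the simultaneous choice of the single index $m(j)$: it must be large enough to keep $g_j$ close to $f_{k_j}$ in the weak $W^{1,p}$ topology (so that the diagonal $(g_j)$ still converges to $f$, via the density-plus-boundedness argument from the proof of Theorem~\ref{LSCClosure}) and also large enough to make $\int_{B_i}\det Dg_j$ close to $\int_{B_i}\det Df_{k_j}$ on the countably many test balls with index $\le j$ (so that the conclusion of Corollary~\ref{JackOfAllTrades} for $(g_j)$ can be transported back onto $(f_{k_j})$). Both are countable sets of conditions on one index, hence compatible, and everything else is routine bookkeeping.
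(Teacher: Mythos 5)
Your proof is correct and takes essentially the same route the paper intends: Corollary~\ref{WeakJackOnClosure} is meant to fall out of the diagonal construction in the proof of Theorem~\ref{LSCClosure} combined with Corollary~\ref{JackOfAllTrades}. Your additional bookkeeping --- choosing $m(j)$ so that $\int_{B_i}\det Dg_j$ nearly matches $\int_{B_i}\det Df_{k_j}$ on a countable family of balls, together with the equiintegrability of $\{\det Df_k\}$ obtained from $E(f_k)\le C$ via Theorem~\ref{LSConHomThm} and De la Vall\'ee Poussin --- just makes explicit the transfer of the conclusion from the diagonal homeomorphisms $f_{k,k}$ back to the mappings $f_k$ themselves, a step the paper leaves implicit.
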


\section{Model that allows for cavitation}

\subsection{Proof of Theorem \ref{LSCClosureCavity}}

In this part, we use results from Proposition \ref{maggi} and Theorem \ref{vitana} to prove Theorem \ref{LSCClosureCavity}. The following two statements will help us to work with the cavity sets $A(f_k)$ and $A(f)$.

\begin{lemma}\label{lematko}
Let $A_k, A$ be bounded measurable sets. If  
$
A_k\to A,$
then there exists a subsequence $A_{k_\ell}$ such that 
$$
\Bigl| A \triangle \bigcap_{j\geq 1}^{\infty} \bigcup_{\ell \ge j} A_{k_\ell} \Bigr| = 0.
$$
\end{lemma}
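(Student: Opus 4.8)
The plan is to reduce the claim to the elementary fact that $L^1$-convergence of characteristic functions yields almost-everywhere convergence along a subsequence, combined with a Borel--Cantelli estimate. Since $A_k$ and $A$ are bounded, they have finite Lebesgue measure, so the hypothesis $A_k\to A$ is precisely the statement that $\|\chi_{A_k}-\chi_A\|_{L^1(\rn)}=|A_k\triangle A|\to 0$ as $k\to\infty$.

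First I would use this to extract a subsequence $A_{k_\ell}$ with $|A_{k_\ell}\triangle A|<2^{-\ell}$ for every $\ell$. Setting $E_\ell:=A_{k_\ell}\triangle A$ and $E:=\limsup_{\ell\to\infty}E_\ell=\bigcap_{j\ge 1}\bigcup_{\ell\ge j}E_\ell$, the bound $\sum_\ell|E_\ell|<\infty$ together with the Borel--Cantelli lemma gives $|E|=0$.

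Next I would check that $A$ and $B:=\bigcap_{j\ge 1}\bigcup_{\ell\ge j}A_{k_\ell}$ coincide outside $E$. Fix $x\notin E$; then $x$ belongs to only finitely many of the sets $E_\ell$, so there is $\ell_0$ with the property that $x\in A_{k_\ell}\iff x\in A$ for all $\ell\ge\ell_0$. If $x\in A$, then $x\in A_{k_\ell}$ for all $\ell\ge\ell_0$, hence $x\in\bigcup_{\ell\ge j}A_{k_\ell}$ for every $j$, so $x\in B$. If $x\notin A$, then $x\notin A_{k_\ell}$ for all $\ell\ge\ell_0$, hence $x\notin\bigcup_{\ell\ge j}A_{k_\ell}$ whenever $j\ge\ell_0$, so $x\notin B$. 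Consequently $A\triangle B\subseteq E$, and therefore $|A\triangle B|=0$, which is the asserted identity.

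There is no genuine obstacle here; the only point that deserves a word of care is that $B$ could a priori be a large (even unbounded) set, but this is immaterial since its symmetric difference with $A$ is trapped inside the Borel--Cantelli null set $E$. If one prefers a bounded conclusion, one may additionally remark that in the intended application all the $A_{k_\ell}$ lie in a fixed bounded set, whence $B$ is bounded as well.
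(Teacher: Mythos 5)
Your proof is correct and follows essentially the same route as the paper: pick a subsequence with $|A_{k_\ell}\triangle A|<2^{-\ell}$, note the limsup of the symmetric differences is null (Borel--Cantelli, which is exactly the paper's summable-union estimate), and show pointwise that $A\triangle\bigcap_j\bigcup_{\ell\ge j}A_{k_\ell}$ is contained in that null set (the paper's $V\subseteq W$ comparison). No gaps.
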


\begin{proof}

We can select $k_\ell$ with 
$$
|A_{k_\ell} \triangle A| < 2^{-\ell}.
$$	
It follows that
$$
\Bigl|\bigcup_{\ell \ge j} (A_{k_\ell}\triangle A)\Bigr|<\sum_{\ell \ge j}2^{-\ell}=2^{-j+1}\overset{j\to\infty}{\to}0 
$$	
and hence (since the sets are nested)
$$
\Bigl|\bigcap_{j\geq 1}\bigcup_{\ell \ge j} (A_{k_\ell}\triangle A)\Bigr| = 0.
$$

We aim to estimate the size of the set
$$
V:=\Bigl(\bigcap_{j\geq 1}\bigcup_{\ell \ge j} A_{k_\ell}\Bigr)\triangle A
$$
and we do so by comparing it with the set
$$
W:=\bigcap_{j\geq 1}\bigcup_{\ell \ge j}\Bigl( A_{k_\ell}\triangle A\Bigr).
$$
We have
\begin{align*}
x\in V \text{ if and only if either } & x\in A \text { and there exists } \ell_0: x\notin A_{k_\ell} \text{ for all } \ell\geq \ell_0\\
\text{ or } & x\notin A \text{ and there exists } \{k_{\ell_m}\}: x\in A_{k_{\ell_m}} \text{ for all } m.
\end{align*}
Similarly,
\begin{align*}
x\in W \text{ if and only if either } & x\in A \text{ and there exists } \{k_{\ell_m}\}: x\notin A_{k_{\ell_m}} \text{ for all } m\\
\text{ or } & x\notin A \text{ and there exists } \{k_{\ell_m}\}: x\in A_{k_{\ell_m}} \text{ for all } m.
\end{align*}
This shows that $V\subseteq W$, and since we know that $|W|=0$, we obtain that $|V|=0$ as well.

\end{proof}

\begin{thm}\label{t:cavities_same}
Let $p>\lfloor\frac{n}{2}\rfloor$, let $\Omega\subseteq \rn$ be a bounded domain and let $f_0$ be a homeomorphism from $\overline{\Omega}$ into $\rn$ which satisfies the Lusin $(N)$ condition, $E(f_0)<\infty$ and $|f_0(\partial\Omega)|=0$. 
Let $f_k \in \mathcal{H}_{c,f_0}^{1,p}$ satisfy $E_c(f_k) \leq C$, 
$$
f_k \rightharpoonup f \text{ in } W^{1,p}(\Omega, \mathbb{R}^n) \quad \text{and } \quad f_k \to f \text{ a.e.}
$$
Assume we have two subsequences $f_{k_\ell}$ and $f_{k_m}$ such that
$$
A(f_{k_l})=:A_{k_\ell} \to A^1 \quad \text{and} \quad A(f_{k_m})=:A_{k_m} \to A^2.
$$
Then 
$$
|A^1 \triangle A^2| = 0.
$$
\end{thm}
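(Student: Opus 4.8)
The plan is to show that both $A^1$ and $A^2$ coincide (up to a null set) with a single set determined intrinsically by the limiting map $f$, namely the set where the ``counting function'' of $f$ drops below one, or equivalently where the local degree of the approximating homeomorphisms concentrates extra mass. Concretely, I would argue that for every ball $B\subseteq\Omega$ one has $|f_k(B)|\to |f(B\setminus N)| + |A^i\cap f(B\setminus N)^{\text{something}}|$; more usefully, I would track the total masses. Since $f_k:\overline\Omega\setminus\{x_j^k\}\to\overline{f(\Omega)}\setminus A(f_k)$ is a homeomorphism and $f_k=f_0$ on $\partial\Omega$, the image $f_k(\Omega\setminus\operatorname{Cav}(f_k))$ is exactly $f_0(\Omega)\setminus A(f_k)$ (interior), so $|f_k(\Omega)| = |f_0(\Omega)| - |A(f_k)|$. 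Meanwhile, applying Lemma~\ref{l:reverse} (the reverse estimate \eqref{reverse2} holds since $f_k$ satisfies Lusin $(N)$ away from the finitely many cavity points, which carry no measure) together with the area formula for $f_k$ on $\Omega\setminus\operatorname{Cav}(f_k)$ gives $|f_k(\Omega)| = \int_\Omega \det Df_k$. Hence $\int_\Omega\det Df_k = |f_0(\Omega)| - |A(f_k)|$.

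Next I would pass to the limit. By the De la Vallée Poussin / Dunford–Pettis argument exactly as in Step~\ref{InitialSetup} of Theorem~\ref{LSConHomThm} (using \eqref{varphi}, \eqref{varphi2} and $E_c(f_k)\le C$), after a subsequence $\det Df_k\rightharpoonup J$ weakly in $L^1(\Omega)$, so $\int_\Omega\det Df_k \to \int_\Omega J$. On the other hand $\chi_{A_{k_\ell}}\to\chi_{A^1}$ in $L^1$ by hypothesis (the sets are bounded, contained in $\overline{f_0(\Omega)}$, so convergence in measure is $L^1$ convergence of indicators), hence $|A_{k_\ell}|\to|A^1|$, and likewise $|A_{k_m}|\to|A^2|$. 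Therefore
\[
|A^1| = |f_0(\Omega)| - \int_\Omega J = |A^2|,
\]
so $A^1$ and $A^2$ have the same measure. This alone is not enough — equal measure does not give $|A^1\triangle A^2|=0$ — so the real work is to localize this identity.

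To localize, I would repeat the mass-balance over an arbitrary ball $B\subseteq\Omega$ rather than over all of $\Omega$. The point is that $f_k(B)$ and $f_k(\Omega\setminus\overline B)$ are disjoint open sets whose union, together with $f_k(\partial B)$ (a null set, for a.e.\ radius), is $f_0(\Omega)\setminus A(f_k)$; splitting $A(f_k)$ according to which $K_i^k$ lies in $f_k(B)$'s complement is delicate, but one can instead use the inner/outer structure: a cavity set $K_i^k$ is ``inside'' $f_k(B)$ precisely when $x_i^k\in B$. Thus $|f_k(B)| = \int_B \det Df_k + \sum_{x_i^k\in B}|K_i^k|$. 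Letting $A^B(f_k):=\bigcup_{x_i^k\in B}K_i^k$, I would show using Lemma~\ref{lematko} and the pointwise/weak convergence (mimicking Steps~\ref{FirstEstimateStep}--\ref{SecondEstimateStep}, now with $f_k$ a homeomorphism on $B\setminus\operatorname{Cav}(f_k)$) that $|f_k(B)\triangle (f(B\setminus N)\cup A^B)|\to 0$ for a suitable limit set $A^B$, and that $A^B$ is recovered both from the subsequence $(k_\ell)$ and from $(k_m)$ as a piece of $A^1$ resp.\ $A^2$. Comparing gives $|A^1\cap B^*| = |A^2\cap B^*|$ for the relevant image region associated to every ball $B$; since balls generate the topology, a Vitali/density argument upgrades equality of all such masses to $|A^1\triangle A^2|=0$.

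The main obstacle I anticipate is precisely this localization: controlling which cavities ``belong'' to $f_k(B)$ when the cavity points $x_i^k$ approach $\partial B$ or cluster, and making rigorous the claim that $f_k(B)$ converges in measure to $f(B\setminus N)$ \emph{together with} the cavities whose centres sit inside $B$. One has to be careful that in the limit countably many cavities may appear and that a cavity point on $\partial B$ could contribute to neither or be split; choosing the radius of $B$ outside a countable/null bad set (so that $\partial B$ meets no limiting cavity and $\mathcal H^{n-1}$-a.e.\ behaves well) should handle this, after which the argument parallels the homeomorphic case already proved. The perimeter hypothesis $P(A(f_k),\rn)\le C/a$ from $E_c(f_k)\le C$, via Theorem~\ref{vitana}, guarantees the limit sets $A^1,A^2$ are genuine sets of finite perimeter and that no mass escapes, which is what lets the measure-theoretic comparison close.
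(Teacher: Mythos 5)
There is a genuine gap, and it sits exactly at the heart of the theorem. Your global mass balance (which, modulo the slip noted below, is fine and essentially reproduces the paper's later corollary $|f_0(\Omega)|=\|\det Df_k\|_{L^1}+|A(f_k)|$) only yields $|A^1|=|A^2|$, as you admit. But the localization you propose to close the gap is circular: the key claim that the limit set $A^B$ ``is recovered both from the subsequence $(k_\ell)$ and from $(k_m)$'' is precisely the statement to be proved -- indeed for $B=\Omega$ it \emph{is} the theorem verbatim -- and you give no mechanism for it beyond ``mimicking'' the homeomorphic case, which does not apply directly (the later cavity analogue, Theorem~\ref{LSConHomThm_Cav}, itself relies on this theorem through the well-definedness of $A(f)$ and Lemma~\ref{nutne}). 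Moreover, even granting convergence of the ``filled'' images along each subsequence, comparing only the \emph{measures} of cavity mass attributed to domain balls $B\subseteq\Omega$ cannot distinguish two different limit cavity sets of equal volume opened at the same point of $\Omega$: the localization is in the domain, while the sets $A^1,A^2$ must be identified in the target, so no Vitali/density argument over balls $B$ can upgrade equality of these masses to $|A^1\triangle A^2|=0$. A smaller but symptomatic error: $|f_k(B)|=\int_B\det Df_k$ exactly (by the area formula, Lusin $(N)$ and injectivity off the cavity points); the cavities $K_i^k$ are \emph{not} contained in the image $f_k(B)$, so your formula $|f_k(B)|=\int_B\det Df_k+\sum_{x_i^k\in B}|K_i^k|$ would only make sense for a topological/filled image, which you have not defined at this stage.

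What is actually needed -- and what the paper does -- is a target-space contradiction argument using the uniform measure estimates of Lemma~\ref{l:reverse} together with the assumed a.e.\ convergence $f_k\to f$. If $|A^2\setminus A^1|>0$, pick $D\subseteq A^2\setminus A^1$ of positive measure which eventually avoids all $A_{k_\ell}$; since $D$ lies (up to a null set) in the image of each $f_{k_\ell}$, the preimages $f_{k_\ell}^{-1}(D_i)$ of a slightly shrunk $D_i$ have measure bounded below by $\Psi^{-1}(|D_i|)$, hence their limsup $F$ is large; on a large subset $H^{k_0}\subseteq F$ the maps $f_k$ are uniformly within $1/i$ of each other for $k\geq k_0$, so $f_{k_{m_0}}(H^{k_0})$ is forced into a small neighborhood $G$ of $D$; but $f_{k_{m_0}}$ cannot enter its own cavity set $A_{k_{m_0}}$, which essentially fills $D$ in the limit along $(k_m)$, so $f_{k_{m_0}}(H^{k_0})\subseteq G\setminus\widetilde D_i$, a set of measure $<\tfrac{2}{n}|D|$, contradicting the lower bound $\Phi(|H^{k_0}|)\leq |f_{k_{m_0}}(H^{k_0})|$. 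Your sketch never engages with this obstruction (the interplay between one subsequence covering $D$ and the other excluding it), which is the actual content of the proof.
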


\begin{proof}
For a contradiction, let us assume that $|A^2 \setminus A^1| > 0$.
 Select $D \subseteq A^2 \setminus A^1$:
$$
|D| > 0, \quad \exists \ell_0 \; \forall \ell \ge \ell_0 : D \cap A_{k_\ell} = \emptyset
$$
(that is possible because $ D \cap A^1 = \emptyset$ and we can assume -- again up to a subsequence -- that $|A^1 \triangle A_{k_\ell}|<2^{-\ell} |A^2\setminus A^1|$).
Given $\Phi$ and $\Psi$ from Lemma \ref{l:reverse}, we find 
$n \in \mathbb{N}$ so that
$$
\frac{2}{n} |D| < \Phi\Bigl( \frac{1}{4} \Psi^{-1}(|D|) \Bigr)
$$
and $G \supseteq D$ open such that
$$
|G| \leq \Bigl(1 + \frac{1}{n} \Bigr)|D|.
$$

Now we find $i \in \mathbb{N}$ such that 
\eqn{defdi}
$$
D_i := \{ y \in D : B(y, \tfrac{2}{i}) \subseteq G \}
$$ 
satisfies
\eqn{ddd}
$$
\Psi^{-1}(|D_i|) > \frac{1}{2} \Psi^{-1}(|D|) \quad \text{and} \quad \Bigl(1 - \frac{1}{n^2}\Bigr)|D_i| > \left(1 - \tfrac{1}{n}\right)|D|.
$$
Define $ F_{\ell} := f_{k_{\ell}}^{-1}(D_i) $. We know from Lemma \ref{l:reverse} that 
$$
|D_i| = |f_{k_{\ell}}(F_{\ell})| \leq \Psi(|F_{\ell}|),
$$
therefore $ |F_{\ell}| \geq \Psi^{-1}(|D_i|)$. Set $F := \bigcap_{j=1} \bigcup_{\ell \geq j} F_{\ell}$, then $ |F| \geq \Psi^{-1}(|D_i|)$. 
Denote 
\eqn{defk0}
$$
H^{k_0} := \{ x \in F : |f_{k}(x) - f_{k_0}(x)| < \tfrac{1}{i}  \text{ for every } k \geq k_0 \}
$$ 
and pick $k_0$ big enough
such that $|H^{k_0}| \geq \tfrac{1}{2}|F|$.

We know that for every $x \in H^{k_0}$ we can find $\ell$ so that $k_{\ell} \geq k_0$ with $x \in F_{\ell}$. Thus 
(using $ F_{\ell} = f_{k_{\ell}}^{-1}(D_i) $)  $f_{k_{\ell}}(x) \in D_i $ which together with \eqref{defdi} and \eqref{defk0}  implies that $f_k(x) \in G$ for all $k \geq k_0$.
Since $D_i\subseteq D \subseteq A^2$, we can use $A_{k_m}\to A^2$ to find $m_0$ such that $k_{m_0} \geq k_0$ and
$$
\text{ to find } \, \widetilde{D}_i \subseteq D_i \cap A_{k_{m_0}} \text{ with } |\widetilde{D}_i| > \Bigl(1 - \frac{1}{n^2}\Bigr)|D_i|.
$$
Now we know $f_{k_{m_0}}(H^{k_0}) \subseteq G$, at the same time $|f_{k_{m_0}}(H^{k_0}) \cap \widetilde{D}_i| = 0$ (because $\widetilde{D}_i\subseteq A_{k_{m_0}}$ and $A_{k_{m_0}}$ corresponds to cavities of $f_{k_{m_0}}$). Altogether we obtain using also Lemma \ref{l:reverse} and \eqref{ddd}
$$
\begin{aligned}
\Phi\left(\tfrac{1}{4} \Psi^{-1}(|D|)\right) &\leq \Phi\left(\tfrac{1}{2} \Psi^{-1}(|D_i|)\right)
\leq \Phi\left(\tfrac{1}{2} |F|\right)
\leq \Phi(|H^{k_0}|) \leq |f_{k_{m_0}}(H^{k_0})| \\
&\leq |G \setminus \widetilde{D}_i|
\leq |G \setminus D|+|D\setminus \widetilde{D}_i|
 < \tfrac{2}{n}|D|.\\
 \end{aligned}
$$
This gives us the desired contradiction.

\end{proof}

\begin{remark} Let $f_k \in \mathcal{H}_{c,f_0}^{1,p}$, $E_c(f_k) \leq C$ and $f_k \rightharpoonup f$. Assume two subsequences, $f_\ell$ and $f_m$, which converge pointwise a.e. As the pointwise limit of $f_{\ell_1}, f_{m_1}, f_{\ell_2}, f_{m_2}, \dots$ is $f$, it follows from Theorem \ref{t:cavities_same} that both $A(f_\ell)$ and $A(f_m)$ converge to the same set $A$ (up to a set of measure zero). Therefore for $f\in \overline{\mathcal{H}^{1,p}_{c,f_0}}^w$ we can denote this set by $A(f)$.
\end{remark}

The following observation tells us that even for the limiting mapping $f\in \overline{\mathcal{H}^{1,p}_{c,f_0}}^w$ cavities are disjoint from the image of the body as expected. 

\begin{lemma}\label{nutne}
Let $p>\lfloor\frac{n}{2}\rfloor$, let $\Omega\subseteq \rn$ be a bounded domain and let $f_0$ be a homeomorphism from $\overline{\Omega}$ into $\rn$ which satisfies the Lusin $(N)$ condition, $E(f_0)<\infty$ and $|f_0(\partial\Omega)|=0$. 
Let $f\in \overline{\mathcal{H}^{1,p}_{c,f_0}}^w$, then $f(\Omega\setminus N)$ and $A(f)$ are essentially disjoint.
\end{lemma}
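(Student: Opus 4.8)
The plan is to argue by contradiction: assume $|f(\Omega\setminus N)\cap A(f)|>0$. Covering $\Omega$ by countably many open balls $B$ with $\overline B\subseteq\Omega$, I may fix one with $|f(B\setminus N)\cap A(f)|>0$. Choose $f_k\in\mathcal{H}_{c,f_0}^{1,p}$ with $E_c(f_k)\le C$ and $f_k\rightharpoonup f$ in $W^{1,p}(\Omega,\rn)$. Since $P(A(f_k),\rn)\le E_c(f_k)/a$ and all the $A(f_k)$ lie in a fixed ball, Theorem~\ref{vitana} together with the Remark following Theorem~\ref{t:cavities_same} allows me, after passing to a subsequence (not relabelled), to assume $f_k\to f$ a.e.\ and $\chi_{A(f_k)}\to\chi_{A(f)}$ in $L^1(\rn)$. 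As in the proof of Theorem~\ref{LSConHomThm} (using Theorem~\ref{modBHZ} in place of Theorem~\ref{BHZthm}) I take the null set $N$ to contain the exceptional set for the area formula of \cite{Ha}, the exceptional set for the a.e.-injectivity of $f$ on $G:=\{\det Df>0\}$ furnished by Theorem~\ref{modBHZ}, a set off which $f_k\to f$ pointwise, and the set $\bigcup_k\operatorname{Cav}(f_k)$ (countable, hence null). The one structural fact about the approximating maps that I use is that, since each $f_k$ maps $\overline\Omega\setminus\operatorname{Cav}(f_k)$ homeomorphically onto $\overline{f_k(\Omega)}\setminus A(f_k)$, we have $f_k(E)\cap A(f_k)=\emptyset$ for every $E\subseteq\Omega\setminus N$.

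Next I would produce a positive-measure set in the domain whose $f$-image lies in $A(f)$. Because $f$ need not satisfy the Lusin $(N)$ condition (cf.\ Theorem~\ref{example}), a positive-measure subset of $f(B\setminus N)$ need not be the image of a positive-measure set, so one has to work on $G$. Writing $M:=\{\det Df=0\}$ (recall $\det Df\ge0$ a.e.), the area formula restricted to $M\setminus N$ gives $|f(M\setminus N)|=0$, and hence
$$
\bigl|f((B\setminus N)\cap G)\cap A(f)\bigr|\ \ge\ \bigl|f(B\setminus N)\cap A(f)\bigr|\ >\ 0 .
$$
Pick a compact $K\subseteq f((B\setminus N)\cap G)\cap A(f)$ with $|K|>0$ and set $E_G:=(B\setminus N)\cap G\cap f^{-1}(K)$, so that $f(E_G)=K$. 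Since $f$ is injective on $G\setminus N\supseteq E_G$ by Theorem~\ref{modBHZ}, and $\det Df>0$ there, the area formula yields $|K|=|f(E_G)|=\int_{E_G}\det Df$, and therefore $|E_G|>0$.

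To conclude I would use Egorov's theorem to find a measurable $\tilde E\subseteq E_G$ with $|\tilde E|>0$ on which $f_k\to f$ uniformly, and set $\beta:=\Phi(|\tilde E|)>0$ with $\Phi$ from Lemma~\ref{l:reverse}. As $f(\tilde E)\subseteq K$ and $K$ is compact, choose a bounded open $K^+\supseteq K$ with $|K^+\setminus K|<\beta/2$; then $\rho:=\dist(K,\rn\setminus K^+)>0$, and once $\sup_{\tilde E}|f_k-f|<\rho$ we have $f_k(\tilde E)\subseteq K^+$. Because $\tilde E\cap\operatorname{Cav}(f_k)=\emptyset$, also $f_k(\tilde E)\cap A(f_k)=\emptyset$, so $f_k(\tilde E)\subseteq K^+\setminus A(f_k)$. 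On one hand $K\subseteq K^+\cap A(f)$ forces $|K^+\setminus A(f)|\le|K^+\setminus K|<\beta/2$, and $\chi_{A(f_k)}\to\chi_{A(f)}$ in $L^1$ gives $|K^+\setminus A(f_k)|\to|K^+\setminus A(f)|<\beta/2$; on the other hand $|f_k(\tilde E)|\ge\Phi(|\tilde E|)=\beta$ by \eqref{reverse}. For $k$ large these two facts are incompatible, a contradiction. Hence $|f(B\setminus N)\cap A(f)|=0$ for every such ball $B$, and therefore $|f(\Omega\setminus N)\cap A(f)|=0$.

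I expect the main obstacle to be the middle step: since $f$ may fail the Lusin $(N)$ condition, there is no free passage from a positive-measure set in the image to one in the domain, so one is forced onto $\{\det Df>0\}$ and has to combine the low-integrability area formula of \cite{Ha} with the a.e.-injectivity coming from Theorem~\ref{modBHZ}; making these fit together cleanly (in particular, keeping all relevant exceptional sets inside $N$) is the delicate point. A minor additional subtlety is the ``boundary-layer'' loss when $K$ is replaced by the open neighbourhood $K^+$, but this is harmless because $|K^+\cap A(f)|\ge|K|$ and $A(f_k)\to A(f)$ in measure, so the error $|K^+\setminus A(f_k)|$ can be pushed below the fixed lower bound $\Phi(|\tilde E|)$ on $|f_k(\tilde E)|$.
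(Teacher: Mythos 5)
Your proof is correct and follows essentially the same route as the paper: a contradiction argument that extracts a positive-measure compact set $K\subseteq f(\Omega\setminus N)\cap A(f)$, uses the area formula together with a.e.\ injectivity to get a positive-measure preimage, applies the lower bound $\Phi$ from Lemma~\ref{l:reverse} to $f_k$ of that preimage, and traps these images (which avoid $A(f_k)$) in a thin collar around $K$ via pointwise convergence and the convergence $A(f_k)\to A(f)$. The only differences are cosmetic: you invoke Egorov and compare with $|K^+\setminus A(f_k)|$, whereas the paper selects a good index $k_0$ directly from pointwise convergence and arranges $K\subseteq A(f_k)$ for all $k$ by passing to a rapidly convergent subsequence.
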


\begin{proof} 
Assume for contrary that there is a compact set 
$$
K_0\subseteq f(\Omega\setminus N)\cap A(f)\text{ with }|K_0|>0. 
$$
Fix $\epsilon>0$. We know that $A(f_k)\to A(f)$ in measure and hence we can can assume that (for a subsequence that we still denote the same)
$$
\sum_k|A(f)\triangle A(f_k)|<\frac{|K_0|}{2}. 
$$
It follows that we can find a compact set 
$$
K\subseteq f(\Omega\setminus N)\cap A(f)\text{ with }|K|\geq  \frac{|K_0|}{2}>0\text{ such that }K\subset A(f_k)\text{ for every }k. 
$$
Analogously to the proof of Step 2 of Theorem \ref{LSConHomThm} we obtain that 
$$
\int_A J_f=|f(A\setminus N)|
$$
for an arbitrary measurable set $A$ and thus $|f^{-1}(K)|>0$.

We fix $\eta>0$ small enough so that the set 
\eqn{uuuu}
$$
K_{\eta}:=K+B(0,\eta)\text{ satisfies }|K_{\eta}\setminus K|<\Phi\Bigl( \frac{|f^{-1}(K)|}{2}\Bigr),  
$$
where $\Phi$ is from Lemma \ref{l:reverse}.
For almost every $x\in f^{-1}(K)$ we know that $f_k(x)\to f(x)\in K$. It follows that there is $k_x$ such that for $k\geq k_x$ we have $f_k(x)\in K_{\eta}$. We can fix $k_0$ such that for 
$$
E:=\bigl\{x\in f^{-1}(K): k_x\leq k_0\bigr\}\text{ we have } |E|> \frac{|f^{-1}(K)|}{2}. 
$$
From Lemma \ref{l:reverse} we obtain that 
\eqn{uuu}
$$
|f_{k_0}(E)|\geq \Phi\Bigl( \frac{|f^{-1}(K)|}{2}\Bigr).
$$
On the other hand $k_x\leq k_0$ for every $x\in E$ which implies that $f_{k_0}(x)\in K_{\eta}$ and since $f_k(x)\notin A(f_k)$ (by the definition of the cavity) and $K\subseteq A(f_k)$ for every $k$ we obtain that
$$
f_{k_0}(x)\in K_{\eta}\setminus K\text{ and hence }f_{k_0}(E)\subseteq K_{\eta}\setminus K.
$$
Now \eqref{uuu} and \eqref{uuuu} give us the desired contradiction. 
\end{proof}

Let us now return to Theorem \ref{LSCClosureCavity}. We need the following result.

\begin{thm}\label{LSConHomThm_Cav}
	Let $p>\lfloor\frac{n}{2}\rfloor$, let $\Omega\subseteq \rn$ be a bounded domain and let $f_0$ be a homeomorphism from $\overline{\Omega}$ into $\rn$ which satisfies the Lusin $(N)$ condition, $E(f_0)<\infty$ and $|f_0(\partial\Omega)|=0$. It holds that
	$$
		E(f) \leq \liminf_{k\to\infty} E(f_k)
	$$
	for any sequence $f_k\in \mathcal{H}_{c,f_0}^{1,p}$ such that $f_k \rightharpoonup f$ in $W^{1,p}(\Omega, \rn)$. 
\end{thm}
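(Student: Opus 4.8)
The plan is to follow, essentially verbatim, the four-step scheme of the proof of Theorem~\ref{LSConHomThm}, inserting the extra bookkeeping needed to handle the finite (hence Lebesgue-null) sets of cavity points $\operatorname{Cav}(f_k)=\{x_1^k,\dots,x_{m_k}^k\}$ of the maps $f_k\in\mathcal{H}_{c,f_0}^{1,p}$. If $\liminf_k E(f_k)=\infty$ there is nothing to prove, so we pass to a subsequence realizing the $\liminf$; as in the proof of Theorem~\ref{LSConHomThm} we may then assume $\sup_k E(f_k)<\infty$, and we may also assume $\sup_k P(A(f_k),\mathbb{R}^n)<\infty$. Along a further subsequence we obtain $f_k\to f$ strongly in $L^p$ and a.e., $\det Df_k\rightharpoonup J$ weakly in $L^1(\Omega)$ for some $J$ (by De la Vall\'ee Poussin and Dunford--Pettis, using the superlinearity \eqref{varphi2}), and $\chi_{A(f_k)}\to\chi_{A(f)}$ in $L^1(\mathbb{R}^n)$ (by Theorems~\ref{vitana} and~\ref{t:cavities_same}). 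Theorem~\ref{LSC} applied with $W(x,u)=|u_1|^p+\varphi(u_2)$ gives $\int_\Omega(|Df|^p+\varphi(J))\,dx\le\liminf_k E(f_k)$, so the matter reduces, exactly as before, to proving $\det Df=J$ a.e.

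Step~2 carries over with cosmetic changes. Each $f_k$ is a homeomorphism of $\overline\Omega\setminus\operatorname{Cav}(f_k)$ onto its image, hence does not change orientation, and $E(f_k)<\infty$ forces $J_{f_k}>0$ a.e.; the analogue of Theorem~\ref{HOthm} for $\mathcal{H}_{c,f_0}^{1,p}$ (whose proof uses only that the set of discontinuities is countable) then gives $\det Df\ge 0$ a.e. Taking $N_0$ (off which $f_k\to f$ pointwise) and $N_1$ (off which the area formula holds for $f$), set $M=\{x\in\Omega\setminus N_1:J_f(x)=0\}$; then $|f(M)|=0$ by restricting the area formula to $M$, while $|M|=0$ because $E(f)\le\liminf_k E(f_k)<\infty$. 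Theorem~\ref{modBHZ} applied to $f$ yields a null set $N_2$ with $\tilde f:=f|_{\Omega\setminus[M\cup N_2]}$ injective; with $N:=N_0\cup N_1\cup N_2$ the area formula gives $|f(B\setminus N)|=\int_B\det Df$ for every ball $B\subseteq\Omega$. On the other side, applying the area formula to the homeomorphism $f_k|_{\Omega\setminus\operatorname{Cav}(f_k)}$, which satisfies Lusin~$(N)$, and using $|\operatorname{Cav}(f_k)|=0$, we get $|f_k(B)|=\int_B\det Df_k\to\int_BJ$. Hence it suffices to prove $|f_k(B)|\to|f(B\setminus N)|$ for every ball $B\subseteq\Omega$; then $\det Df=J$ a.e.\ by Lebesgue differentiation and the theorem follows from Step~1.

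For the two remaining inequalities I would transcribe Steps~3 and~4 of Theorem~\ref{LSConHomThm}, using that $f_k|_{\Omega\setminus\operatorname{Cav}(f_k)}$ is one-to-one and satisfies Lusin~$(N)$ so that Lemma~\ref{l:reverse} applies to it, and that $f_k(\overline\Omega)\subseteq\overline{f_0(\Omega)}$ with $|f_0(\partial\Omega)|=0$. The estimate $\limsup_k|f_k(B)|\le|f(B\setminus N)|$ needs no modification, since there one only takes preimages of sets already contained in $f_k(B)$. The one genuinely new point is in the proof of $\liminf_k|f_k(B)|\ge|f(B\setminus N)|$: after fixing a compact $K\subseteq f(B\setminus N)\setminus[f(M)\cup f(\partial\Omega)]$ with $|K\setminus f_{k_m}(B)|>\epsilon$ one wants $|f_{k_m}^{-1}(K)\setminus B|\ge\delta_\epsilon$, but the part of $K$ lying inside the cavity $A(f_{k_m})$ has no $f_{k_m}$-preimage. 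To handle this, note that $K\subseteq f(\Omega\setminus N)$ is essentially disjoint from $A(f)$ by Lemma~\ref{nutne}, whence $|K\cap A(f_{k_m})|\le|K\cap A(f)|+|A(f_{k_m})\triangle A(f)|\to 0$; so for large $m$ the set $(K\setminus f_{k_m}(B))\setminus A(f_{k_m})$ still has measure $>\epsilon/2$, lies (up to the null set $f_{k_m}(\partial\Omega)$) in the image $f_{k_m}(\Omega\setminus\operatorname{Cav}(f_k))$, and its $f_{k_m}$-preimage lies in $\Omega\setminus B$, so Lemma~\ref{l:reverse} delivers the desired bound $|A_m|\ge\delta_\epsilon$ with $A_m:=f_{k_m}^{-1}(K)\setminus B$. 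With this correction the rest of Step~4 — taking $x$ in $A:=\bigcap_{l}\bigcup_{m\ge l}A_m$ (which has positive measure), passing to the limit $f_{k_{m_j}}(x)\to f(x)\in K$, and using injectivity of $\tilde f$ on $\tilde f^{-1}(K)$ to force $x\in B$, contradicting $x\in\Omega\setminus B$ — goes through unchanged.

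I expect this cavity correction to be the only real obstacle. Everything else is a literal transcription of the homeomorphic argument; the heart of the matter is simply to show that almost every point of a compact subset of $f(\Omega\setminus N)$ still belongs to the images $f_k(\Omega)$ for large $k$, and this is precisely what Lemma~\ref{nutne} together with the $L^1$-convergence $\chi_{A(f_k)}\to\chi_{A(f)}$ provide.
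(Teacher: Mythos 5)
Your proposal is correct and takes essentially the same route as the paper: it transcribes the four steps of Theorem \ref{LSConHomThm} and, in the last step, uses Lemma \ref{nutne} together with the convergence of $\chi_{A(f_k)}$ to show that the compact set $K$ meets the cavity sets of the $f_{k_m}$ in measure less than $\epsilon/2$, so that Lemma \ref{l:reverse} still yields $|A_m|\geq \delta_{\epsilon}$ and the contradiction argument goes through unchanged. The only differences are immaterial: the paper excises the union $\bigcup_j A(f_{k_j})$ once along the subsequence (via a Lemma \ref{lematko}-type estimate), whereas you excise $A(f_{k_m})$ for each $m$ separately, and your parenthetical claim that $|M|=0$ is premature at that stage (it would require $\det Df=J$, which is what is being proved) but is never actually used, since only $|f(M)|=0$ enters the argument.
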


\begin{proof}
We proceed similarly to the proof of Theorem \ref{LSConHomThm}. The first three steps do not require any modification except that we use a version of Theorem \ref{HOthm} for homeomorphisms with finitely many cavities. (Since those mappings are sense-preserving, their proof works also for our case.) In the last step, we need to show that 
$$
\liminf_{k\to\infty} |f_k(B)|\geq |f(B\setminus N)|.
$$
We again fix a subsequence $k_m$ such that
$$
|f(B\setminus N)\setminus f_{k_m}(B)|>\varepsilon.
$$
Moreover, we know that $\sup_k P(A(f_{k}),\mathbb{R}^n)<\infty$ and thus we can find a subsequence of $A(f_k)$ which converges in measure (see Theorem \ref{vitana}). It follows that for our subsequence we can moreover assume that (analogously to the proof of Lemma \ref{lematko})
\eqn{eee}
$$ 
|A(f) \triangle \bigcup_j A(f_{k_j})| <\varepsilon/2. 
$$
Now we find a compact set $K$ such that
$$
K\subseteq f(B\setminus N)\setminus f(M) \text{ and } |K\setminus f_{k_m}(B)|>\varepsilon.
$$
By Lemma \ref{nutne} we know that $|K\cap A(f)|=0$ and hence we can use \eqref{eee} to obtain that
\eqn{eeee}
$$
K\setminus \Bigl(f_{k_m}(B)\cup\bigcup_j A(f_{k_j})\Bigr) \text{ has measure bigger that }\frac{\epsilon}{2}. 
$$
Define
$$
A_m:= f^{-1}_{k_m}\Bigl(K\setminus \bigcup_j A(f_{k_j})\Bigr)\setminus B.
$$
Since we avoided the set of cavities, $A_m$ is well-defined and due to \eqref{eeee} and Lemma \ref{l:reverse} there exists $\delta_\varepsilon$ such that $|A_m|\geq \delta_\varepsilon$. Therefore also
$
A:=\cap\cup A_m
$
satisfies $|A|\geq \delta_\varepsilon$.
Now we can obtain the desired contradiction as before, by showing that for $x\in A\setminus N$ we have $f(x)\in K$, and therefore $x\in B$, contradicting $A\cap B=\emptyset$.
\end{proof}

\begin{proof}[Proof of Theorem \ref{LSCClosureCavity}]
Along the same lines as the proof of Theorem \ref{LSCClosure} we can prove that 
$$
E(f)\leq \liminf_{k\to\infty} E(f_k).
$$ 
It remains to show similar inequality for the last term of $E_c$, i.e., for $P(A(f),\mathbb{R}^n)$. From (any subsequence of) the bounded sequence $A(f_k)$ we can select a subsequence converging in measure (see Theorem \ref{vitana}) so that
$$
P(A(f),\mathbb{R}^n)\leq \liminf_{k\to\infty} P(A(f_k),\mathbb{R}^n).
$$

Note that we again can show that $|f(B\setminus N)\triangle f_k(B)|\to 0$.
\end{proof}

\begin{corollary}
Let $f$ and $f_k$ be as in Theorem \ref{LSCClosureCavity}, then $\det Df_k\rightharpoonup \det Df$ in $L^1(\Omega)$ and $|f_0(\Omega)| = ||\det Df_k||_{L^1(\Omega)} + |A(f_k)| = ||\det Df||_{L^1(\Omega)} + |A(f)|$.
\end{corollary}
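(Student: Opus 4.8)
The statement decomposes into the weak convergence $\det Df_k\rightharpoonup\det Df$ in $L^1(\Omega)$ and the mass balance
$$
|f_0(\Omega)|=\|\det Dg\|_{L^1(\Omega)}+|A(g)|,\qquad g\in\overline{\mathcal{H}_{c,f_0}^{1,p}}^w,
$$
which, applied once to each $f_k$ and once to $f$ (the latter being in $\overline{\mathcal{H}_{c,f_0}^{1,p}}^w$ by Theorem~\ref{LSCClosureCavity}), is exactly the chain of equalities to be proved. I would prove the mass balance first and the weak convergence afterwards.

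For the mass balance, fix $g\in\overline{\mathcal{H}_{c,f_0}^{1,p}}^w$ and choose $g_m\in\mathcal{H}_{c,f_0}^{1,p}$ with $g_m\rightharpoonup g$ in $W^{1,p}$ and $E_c(g_m)\le C$. Since $g_m$ maps $\Omega$ minus its finitely many cavity points homeomorphically onto $f_0(\Omega)\setminus A(g_m)$ (up to a null set, as $|f_0(\partial\Omega)|=0$) and satisfies Lusin $(N)$, the area formula gives $\int_\Omega\det Dg_m=|g_m(\Omega)|=|f_0(\Omega)|-|A(g_m)|$. Letting $m\to\infty$: the proof of Theorem~\ref{LSConHomThm_Cav} applied to $g_m\rightharpoonup g$ shows $\det Dg_m\rightharpoonup\det Dg$ in $L^1(\Omega)$, so testing against $\chi_\Omega$ and using $\det Dg\ge 0$ a.e.\ (the cavity version of Theorem~\ref{HOthm}) we get $\int_\Omega\det Dg_m\to\|\det Dg\|_{L^1(\Omega)}$; and along the subsequence on which $g_m\to g$ a.e., Theorem~\ref{t:cavities_same} together with the Remark following it gives $A(g_m)\to A(g)$ in measure, hence $|A(g_m)|\to|A(g)|$. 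The mass balance follows.

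For the weak convergence, Theorem~\ref{LSC} gives $\int_\Omega\varphi(\det Df_k)\le C$, so by \eqref{varphi2}, De~la~Vall\'ee~Poussin and Dunford--Pettis the family $\{\det Df_k\}$ is equiintegrable and relatively weakly compact in $L^1(\Omega)$; let $\det Df_{k_j}\rightharpoonup J$ along a subsequence. It suffices to identify $\int_B J=\int_B\det Df$ for every ball $B\subseteq\Omega$. As in Step~2 of the proof of Theorem~\ref{LSConHomThm_Cav}, for each $k$ one has $\int_B\det Df_k=|f_k(B\setminus N_k)|$ and likewise $\int_B\det Df=|f(B\setminus N)|$, using that these maps are injective a.e.\ on the set where their Jacobian is positive (Theorem~\ref{modBHZ}), that the zero set of the Jacobian is sent to a null set, and that the Jacobians are nonnegative. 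Then repeating Steps~3 and~4 of that proof for the sequence $f_k\rightharpoonup f$ inside $\overline{\mathcal{H}_{c,f_0}^{1,p}}^w$ — writing $f_k(\,\cdot\setminus N_k)$ throughout in place of $f_k(\,\cdot\,)$, and invoking Lemma~\ref{nutne} wherever the original argument uses that a cavity-homeomorphism avoids its own cavities — yields $|f_k(B\setminus N_k)|\to|f(B\setminus N)|$. Hence $\int_B J=\lim_j|f_{k_j}(B\setminus N_{k_j})|=|f(B\setminus N)|=\int_B\det Df$, so $J=\det Df$, and as this holds for every subsequential limit, $\det Df_k\rightharpoonup\det Df$.

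The one genuinely delicate point is hidden in that last step: the maps $f_k$ lie only in the weak closure and, by Theorem~\ref{example}, need not satisfy Lusin $(N)$, so Lemma~\ref{l:reverse} cannot be applied to them verbatim. The remedy, already used systematically in Sections~3 and~4, is to replace the upper estimate \eqref{reverse2} by $|f_k(A\setminus N_k)|=\int_A\det Df_k\le\Psi(|A|)$, where $\Psi$ is the equiintegrability modulus of $\{\det Df_k\}$, and to obtain the lower estimate $\Phi(|A|)\le|f_k(A\setminus N_k)|=\int_A\det Df_k$ by passing to the limit in $\Phi(|A|)\le\int_A\det Dg_m$ (valid for the approximating homeomorphisms) using the weak $L^1$ convergence of their Jacobians. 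With these two substitutions Steps~3 and~4 of Theorem~\ref{LSConHomThm_Cav} run unchanged, and no new ingredient beyond Theorems~\ref{LSConHomThm_Cav}, \ref{t:cavities_same}, \ref{modBHZ} and Lemma~\ref{nutne} is required.
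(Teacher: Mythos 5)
Your proposal is correct and follows the route the paper intends: the corollary is stated without proof as a byproduct of the preceding arguments, and your two-part decomposition -- the mass balance for every element of $\overline{\mathcal{H}^{1,p}_{c,f_0}}^w$ obtained from the area formula for the approximating cavity-homeomorphisms, testing the weak $L^1$ convergence of their Jacobians with $\chi_{\Omega}$, and the convergence of cavity volumes via Theorem \ref{t:cavities_same}; then the weak convergence $\det Df_k\rightharpoonup\det Df$ by equiintegrability and re-running Steps 2--4 of Theorem \ref{LSConHomThm_Cav} with $|f_k(B\setminus N_k)|=\int_B\det Df_k$ in place of $|f_k(B)|$ -- is exactly what the authors have in mind, and your replacement of \eqref{reverse2} by the equiintegrability modulus is the right fix for the possible failure of the Lusin $(N)$ condition in the weak closure.

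One point deserves an explicit line. In the adapted Step 4, Lemma \ref{nutne} alone is not a full substitute for the sentence ``since we avoided the set of cavities, $A_m$ is well-defined'': for a closure element $f_{k_m}$ you need that almost every point of $K\setminus\bigcup_j A(f_{k_j})$ actually lies in $f_{k_m}(\Omega\setminus N_{k_m})$, i.e. essential surjectivity onto $f_0(\Omega)\setminus A(f_{k_m})$, whereas Lemma \ref{nutne} only gives essential disjointness of image and cavities. This is where the mass balance you prove in the first half must be invoked: $|f_{k_m}(\Omega\setminus N_{k_m})|=|f_0(\Omega)|-|A(f_{k_m})|$, combined with the disjointness from Lemma \ref{nutne} and with $f_{k_m}(\Omega\setminus N_{k_m})\subseteq f_0(\overline{\Omega})$ up to a null set (pointwise limits of the approximating homeomorphisms), forces $|f_0(\Omega)\setminus\bigl(f_{k_m}(\Omega\setminus N_{k_m})\cup A(f_{k_m})\bigr)|=0$; only then does $|A_m|\geq\delta_{\epsilon}$ follow from the equiintegrability bound as you describe. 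Since you establish the mass balance before the weak convergence, this is a one-line addition rather than a structural problem.
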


\subsection{Topological image}

In this part, we look at the analogy of the notion of topological image. In general, for $p<n-1$ we cannot use the standard definition via the topological degree, as the mapping can behave too discontinuously on the boundary of a sphere. Instead, we define the topological image of a ball $B$ under $f\in \overline{\mathcal{H}^{1,p}_{c,f_0}}^w$ as
$$
f_T(B):= f(B\setminus N)\cup A(f_{\rceil B}),
$$
where $N\subseteq \Omega$ is the set such that $f_k$ converges pointwise and $f$ satisfies the area formula on $\Omega\setminus N$ and $f_{\rceil B}$ denotes the restriction of $f$ onto $B$. In the following we show that this notion satisfies some natural properties.

To define the topological image of a point $x\in \Omega$, we need to know that for $r_1<r_2$ we have 
$$
f_T(B(x,r_1))\subseteq f_T(B(x,r_2)).
$$
The inclusion of the first term holds trivially. For the second term, pick $f_k\rightharpoonup f$ such that $f_k\to f$ pointwise a.e. Then for all $k\in\mathbb{N}$ we have
$$
A((f_k)_{\rceil B(x,r_1)})\subseteq A((f_k)_{\rceil B(x,r_2)}),
$$
and therefore (up to a set of measure zero)
$$
A(f_{\rceil B(x,r_1)})\subseteq A(f_{\rceil B(x,r_2)}).
$$ 
This allows us to set 
$$
f_T(x):= \bigcap_{r>0, B(x,r)\subseteq \Omega}f_T(B(x,r)).
$$
(Note that for a.e. $x\in\Omega$ we have $f(x)\in f(B(x,r)\setminus N)$, and so $f(x)\in f_T(x)$.) In case $x\in\partial\Omega$, we can define the topological image as well, replacing the ball $B$ by $B\cap\overline{\Omega}$ in the definitions above (we usually do not distinguish this case in the later). We say that $f$ opens a cavity at $x$ if $|f_T(x)|>0$, and we denote by $\operatorname{Cav}(f)$ the set of all points where $f$ opens a cavity, i.e.,
$$
\operatorname{Cav}(f):=\bigl\{x\in\Omega:\ |f_T(x)|>0\bigr\}. 
$$ 
\begin{remark}
Note that for $p\leq n-1$ this definition does not coincide with the classical definition of topological image as the set $\{y: \deg(y,f,B)\neq 0\}$, even if $f$ is continuous on $\partial B$. This can be observed e.g. in the counterexample of Conti and De Lellis \cite{CDL} (see also \cite{DHM}) of a mapping where the $(INV)$ condition fails. In this example the image of a ball (which agrees with our notion of topological image) goes outside of the set $\{y: \deg(y,f,B)\neq 0\}$. 

For $p>n-1$ one could expect that the definitions of topological image are the same. 
\end{remark}

\begin{lemma}\label{l:disjoint_cav} 
Let $p>\lfloor\frac{n}{2}\rfloor$, let $\Omega\subseteq \rn$ be a bounded domain and let $f_0$ be a homeomorphism from $\overline{\Omega}$ into $\rn$ which satisfies the Lusin $(N)$ condition, $E(f_0)<\infty$ and $|f_0(\partial\Omega)|=0$. 
Let $x_1, x_2 \in \Omega$ and $f \in \overline{\mathcal{H}^{1,p}_{c,f_0}}^w$.
Then $|f_T(x_1) \cap f_T(x_2)| = 0$.
\end{lemma}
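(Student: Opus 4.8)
The plan is to reduce to two disjoint balls and then split the intersection of the two topological images into four pieces, handling each by the area formula, by disjointness of the cavities of the approximating homeomorphisms, and by Lemma~\ref{nutne}. We may assume $x_1\neq x_2$; fix $r>0$ with $B_i:=B(x_i,r)\subseteq\Omega$ and $B_1\cap B_2=\emptyset$, and note that $f_T(x_i)\subseteq f_T(B_i)$ by definition of $f_T(x_i)$, so it suffices to prove $|f_T(B_1)\cap f_T(B_2)|=0$. Since $f_T(B_i)=f(B_i\setminus N)\cup A(f_{\rceil B_i})$, the set $f_T(B_1)\cap f_T(B_2)$ is contained in
\[
\bigl(f(B_1\setminus N)\cap f(B_2\setminus N)\bigr)\ \cup\ \bigl(f(B_1\setminus N)\cap A(f_{\rceil B_2})\bigr)\ \cup\ \bigl(A(f_{\rceil B_1})\cap f(B_2\setminus N)\bigr)\ \cup\ \bigl(A(f_{\rceil B_1})\cap A(f_{\rceil B_2})\bigr),
\]
and I would show each of the four sets is null.

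For the first set I would argue exactly as in Step~2 of the proof of Theorem~\ref{LSConHomThm} (as already reproduced inside the proof of Lemma~\ref{nutne}): $E_c(f)<\infty$ forces $J_f>0$ a.e., $f$ is injective a.e.\ on $\Omega\setminus N$ by Theorem~\ref{modBHZ}, and the area formula then gives $\int_A J_f=|f(A\setminus N)|$ for every measurable $A\subseteq\Omega$. Applying this to $B_1$, $B_2$ and $B_1\cup B_2$ and using $B_1\cap B_2=\emptyset$ yields $|f(B_1\setminus N)|+|f(B_2\setminus N)|=|f(B_1\setminus N)\cup f(B_2\setminus N)|$, hence $|f(B_1\setminus N)\cap f(B_2\setminus N)|=0$.

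For the remaining three sets I would fix $f_k\rightharpoonup f$ with $f_k\to f$ a.e.; each $f_k$ has pairwise disjoint compact cavities $K_1^k,\dots,K_{m_k}^k$ located at its discontinuity points, $A(f_{\rceil B})$ is the in-measure limit of $A((f_k)_{\rceil B})$, and $A((f_k)_{\rceil B})$ is the union of those $K_j^k$ lying in $B$. Being disjoint compact sets, the $K_j^k$ have positive mutual distances, so $P(A((f_k)_{\rceil B}),\mathbb{R}^n)\le P(A(f_k),\mathbb{R}^n)\le C/a$, and Theorem~\ref{vitana} lets me pass to a single subsequence along which $A(f_k)\to A(f)$ and $A((f_k)_{\rceil B_i})\to A(f_{\rceil B_i})$ ($i=1,2$) in measure. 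Then $A((f_k)_{\rceil B_1})\cap A((f_k)_{\rceil B_2})=\emptyset$ and $A((f_k)_{\rceil B_i})\subseteq A(f_k)$ for every $k$ pass to the limit, giving $|A(f_{\rceil B_1})\cap A(f_{\rceil B_2})|=0$ and $A(f_{\rceil B_i})\subseteq A(f)$ up to a null set; consequently $f(B_i\setminus N)\cap A(f_{\rceil B_j})\subseteq f(\Omega\setminus N)\cap A(f)$, which is null by Lemma~\ref{nutne}. Combining the four estimates proves the claim.

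I expect the only genuinely delicate points to be the perimeter bound $P(A((f_k)_{\rceil B}),\mathbb{R}^n)\le C/a$ — needed so that Theorem~\ref{vitana} can be applied to all three cavity sequences simultaneously along one subsequence — and the inclusion $A(f_{\rceil B})\subseteq A(f)$ up to a null set, which is precisely what allows the two cross terms to be reduced to Lemma~\ref{nutne}; everything else is the area formula together with the disjointness of the cavities of each individual $f_k$.
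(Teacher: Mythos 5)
Your argument is correct, but it takes a genuinely different route from the paper. The paper's proof does not decompose $f_T(B_1)\cap f_T(B_2)$ at all: it fixes $\varepsilon>0$, uses Theorem~\ref{t:cavities_same} to replace $A(f_{\rceil B(x_i,r)})$ by the mutually disjoint sets $A((f_k)_{\rceil B(x_i,r)})$ up to measure $\varepsilon$ (so the cavity--cavity overlap is at most $2\varepsilon$), and then disposes of everything involving $f(B(x_i,r)\setminus N)$ simply by letting $r\to 0^+$, since $|f(B(x_i,r)\setminus N)|\to 0$; this gives $|f_T(B(x_1,r))\cap f_T(B(x_2,r))|<3\varepsilon$ for small $r$ and hence the claim for the points $x_1,x_2$, with no appeal to Lemma~\ref{nutne} or to a.e.\ injectivity. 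Your proof instead fixes disjoint balls and kills each of the four pieces separately: the image--image term via the area formula and Theorem~\ref{modBHZ} (exactly the machinery recalled in the proof of Lemma~\ref{nutne}), the cross terms via the inclusion $A(f_{\rceil B_i})\subseteq A(f)$ up to a null set plus Lemma~\ref{nutne}, and the cavity--cavity term via disjointness of $A((f_k)_{\rceil B_1})$ and $A((f_k)_{\rceil B_2})$ passed to the limit in measure. This is heavier (it needs $E_c(f)<\infty$, a.e.\ injectivity, the perimeter bound to run Theorem~\ref{vitana} for three sequences at once, and Lemma~\ref{nutne}), but it buys a strictly stronger conclusion: the topological images of \emph{disjoint balls}, not just of distinct points, are essentially disjoint, which the paper's shrinking-radius argument does not give. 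The steps you flag as delicate are indeed the right ones, and both go through: $P(A((f_k)_{\rceil B}),\mathbb{R}^n)\le P(A(f_k),\mathbb{R}^n)\le C/a$ because the cavities of $f_k$ are finitely many disjoint compacta so the perimeter of a sub-union does not exceed that of the full union, and the inclusions $A((f_k)_{\rceil B_i})\subseteq A(f_k)$ pass to the in-measure limit exactly as the paper itself passes the inclusion $A((f_k)_{\rceil B(x,r_1)})\subseteq A((f_k)_{\rceil B(x,r_2)})$ to the limit just before this lemma.
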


\begin{proof} 
Let $f_k\in \mathcal{H}^{1,p}_{c,f_0}$ be mappings that converge to $f$ weakly. 
For an arbitrary $\varepsilon > 0$ and $0 < r < \tfrac{1}{2} |x_1 - x_2|$ there exists $k_0$ such that for $k\geq k_0$ (see Theorem \ref{t:cavities_same})
$$
\left| A((f_{k})_{ \rceil B(x_i, r)}) \bigtriangleup A(f_{ \rceil B(x_i, r)}) \right| < \varepsilon.
$$
We know that $A((f_k )_{ \rceil B(x_i, r)})$ are mutually disjoint, therefore
$$
\left| A(f_{B( \rceil x_1, r)}) \cap A(f_{ \rceil B(x_2, r)}) \right| < 2\varepsilon.
$$
Since $|f(B(x_i, r)\setminus N)| \xrightarrow{r \to 0+	} 0$, we can find $r_0$ such that for every $ r < r_0$ we have
$$
|f_T(B(x_1, r)) \cap f_T(B(x_2, r))| < 3\varepsilon.
$$
As $f_T(x_i) \subseteq f_T(B(x_i, r))$, we are done.
\end{proof}

\begin{corollary}
From Lemma \ref{l:disjoint_cav} we immediately obtain that $f\in \overline{\mathcal{H}^{1,p}_{c,f_0}}^w$ can open only countably many cavities.
\end{corollary}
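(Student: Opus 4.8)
The plan is to deduce the countability of $\operatorname{Cav}(f)$ from the essential disjointness provided by Lemma~\ref{l:disjoint_cav} together with a uniform finite‑measure bound on the ambient target. First I would observe that every topological image $f_T(x)$ is, up to a null set, contained in the single bounded set $\overline{f_0(\Omega)}$. Indeed, for small $r$ with $B(x,r)\subseteq\Omega$ we have $f_T(x)\subseteq f_T(B(x,r))=f(B(x,r)\setminus N)\cup A(f_{\rceil B(x,r)})$; here $f(B(x,r)\setminus N)\subseteq f(\Omega\setminus N)\subseteq f_0(\Omega)$, while $A(f_{\rceil B(x,r)})$ is (by definition) a weak$^*$ limit in $BV$ of the cavity sets of the approximating homeomorphisms $f_k$, all of which lie in $f_0(\Omega)$. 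Hence $|f_T(x)|\le|f_0(\Omega)|<\infty$ for every $x\in\Omega$, and all these sets sit inside one set of finite measure.

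Next, for $j\in\en$ put $C_j:=\{x\in\Omega:\ |f_T(x)|>1/j\}$, so that $\operatorname{Cav}(f)=\bigcup_{j\in\en}C_j$. I claim each $C_j$ is finite. Suppose not; then we may pick $N$ distinct points $x_1,\dots,x_N\in C_j$ with $N>j\,|f_0(\Omega)|$. By Lemma~\ref{l:disjoint_cav} the sets $f_T(x_1),\dots,f_T(x_N)$ are pairwise essentially disjoint, so finite additivity of Lebesgue measure together with the inclusion $f_T(x_i)\subseteq\overline{f_0(\Omega)}$ (mod null sets) yields
$$
|f_0(\Omega)|\ \ge\ \Bigl|\bigcup_{i=1}^N f_T(x_i)\Bigr|\ =\ \sum_{i=1}^N |f_T(x_i)|\ >\ \frac{N}{j}\ >\ |f_0(\Omega)|,
$$
a contradiction. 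Therefore $C_j$ is finite for every $j$, and $\operatorname{Cav}(f)=\bigcup_{j\in\en}C_j$ is a countable union of finite sets, hence countable.

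There is essentially no obstacle here beyond bookkeeping, which is why the statement is labelled as following ``immediately'' from the lemma. The only point that genuinely needs a moment's care is checking that all the $f_T(x)$ lie in a common set of finite measure, so that the pigeonhole step is legitimate; this follows from the structure of $f_T$ and the fact that the approximating maps have image inside $f_0(\Omega)$. Everything else is the standard observation that a family of pairwise essentially disjoint sets of positive measure inside a set of finite measure is at most countable.
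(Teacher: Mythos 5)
Your argument is correct and is exactly the standard reasoning the paper intends by ``immediately'': pairwise essential disjointness of the sets $f_T(x)$ (Lemma \ref{l:disjoint_cav}), all contained up to null sets in the finite-measure set $\overline{f_0(\Omega)}$ (and $|\overline{f_0(\Omega)}|=|f_0(\Omega)|$ since $|f_0(\partial\Omega)|=0$), forces each level set $\{x:|f_T(x)|>1/j\}$ to be finite, so $\operatorname{Cav}(f)$ is countable. No gaps; this matches the paper's (unwritten) proof.
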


For $f\in \mathcal{H}^{1,p}_{c,f_0}$ there is a pairing of the points of discontinuity $x_1,\dots,x_m$ and the resulting cavities $K_1,\dots,K_m$. In particular, since $f$ satisfies the Lusin $(N)$ condition,
\begin{equation}\label{EqTopImageCav}
A(f)=\bigcup_i K_i = \bigcup_{x_i \in \operatorname{Cav}(f)} f_T(x_i)
\end{equation}
up to a set of measure zero.
We would like to have an analogous result for weak limits of such mappings, connecting the topological images of points where cavities are open and the limiting set $A(f)$ (which is just the limit of the corresponding $A(f_k)$). The following theorem shows that indeed the analogy of \eqref{EqTopImageCav} holds.

\begin{thm} 
Let $p>\lfloor\frac{n}{2}\rfloor$, let $\Omega\subseteq \rn$ be a bounded domain and let $f_0$ be a homeomorphism from $\overline{\Omega}$ into $\rn$ which satisfies the Lusin $(N)$ condition, $E(f_0)<\infty$ and $|f_0(\partial\Omega)|=0$. 
Let $f\in \overline{\mathcal{H}^{1,p}_{c,f_0}}^w$, then 
$$
|A(f)\triangle \bigcup_{x\in \operatorname{Cav}(f)} f_T(x)|=0.
$$
\end{thm}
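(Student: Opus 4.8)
The plan is to prove the two essential inclusions $\bigcup_{x\in\operatorname{Cav}(f)}f_T(x)\subseteq A(f)$ and $A(f)\subseteq\bigcup_{x\in\operatorname{Cav}(f)}f_T(x)$, each up to a set of measure zero. Throughout, fix $f_k\in\mathcal{H}^{1,p}_{c,f_0}$ with $E_c(f_k)\le C$, $f_k\rightharpoonup f$ and $f_k\to f$ a.e.; write $A(f_k)=\bigcup_{i=1}^{m_k}K_i^k$ with $K_i^k=(f_k)_T(x_i^k)$ the cavities and cavity points of $f_k$, and recall that $A(f_k)\to A(f)$ in measure (along the full sequence, by Theorem~\ref{t:cavities_same} together with Theorem~\ref{vitana}). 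For the first, easier inclusion one argues as in the easy direction above: for $x\in\operatorname{Cav}(f)$ and any $r$ with $B(x,r)\subseteq\Omega$ we have $f_T(x)\subseteq f_T(B(x,r))=f(B(x,r)\setminus N)\cup A(f_{\rceil B(x,r)})$, and since $A((f_k)_{\rceil B(x,r)})\subseteq A(f_k)$, passing to the measure limit gives $A(f_{\rceil B(x,r)})\subseteq A(f)$ up to measure zero; hence $|f_T(x)\setminus A(f)|\le|f(B(x,r)\setminus N)|\to0$ as $r\to0^+$, so $|f_T(x)\setminus A(f)|=0$, and since $\operatorname{Cav}(f)$ is countable the inclusion follows.

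The heart of the argument is the second inclusion, and the key idea is that the perimeter term in $E_c$ controls the number of non-negligible cavities. Since the $K_i^k$ are pairwise disjoint compact sets their reduced boundaries are disjoint, so $P(A(f_k),\mathbb{R}^n)=\sum_iP(K_i^k,\mathbb{R}^n)\le C/a$; combined with the isoperimetric inequality (Theorem~\ref{isoperimetric}) this produces a constant $C'$ with $\sum_i|K_i^k|^{(n-1)/n}\le C'$ for all $k$, whence for every $\eta>0$ the number of indices with $|K_i^k|\ge\eta$ is at most $C'\eta^{-(n-1)/n}$ and $\sum_{i:\,|K_i^k|<\eta}|K_i^k|\le C'\eta^{1/n}$, both bounds uniform in $k$. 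Given $\epsilon>0$ one chooses $\eta$ with $C'\eta^{1/n}<\epsilon$, relabels the ``large'' cavities as $K_1^k,\dots,K_{N_k}^k$ with $N_k\le C'\eta^{-(n-1)/n}$, and passes to a subsequence so that $N_k\equiv N$, $K_j^k\to L_j$ in measure (Theorem~\ref{vitana}) with $|L_j|\ge\eta$, and $x_j^k\to x_j^*\in\overline\Omega$. Writing $\chi_{A(f_k)}=\sum_{j\le N}\chi_{K_j^k}+\chi_{R_k}$ with $|R_k|<\epsilon$ and taking $L^1$-limits, one obtains $\bigcup_{j\le N}L_j\subseteq A(f)$ up to measure zero and $|A(f)\setminus\bigcup_{j\le N}L_j|\le\epsilon$.

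It then remains to locate these limit cavities. For small $r>0$ one has $x_j^k\in B(x_j^*,r)$ for large $k$, hence $K_j^k\subseteq A((f_k)_{\rceil B(x_j^*,r)})$, and in the measure limit $L_j\subseteq A(f_{\rceil B(x_j^*,r)})\subseteq f_T(B(x_j^*,r))$ up to measure zero; taking $r=2^{-m}$ and using that $f_T(B(x_j^*,r))$ is decreasing in $r$ up to measure zero, a countable intersection yields $L_j\subseteq f_T(x_j^*)$ up to measure zero. Since $|f_T(x_j^*)|\ge|L_j|\ge\eta>0$ this forces $x_j^*\in\operatorname{Cav}(f)$, so up to measure zero $A(f)$ is contained in $\bigcup_{j\le N}f_T(x_j^*)$ together with a set of measure $\le\epsilon$; letting $\epsilon\to0$ proves the second inclusion, and combining it with the first gives $|A(f)\triangle\bigcup_{x\in\operatorname{Cav}(f)}f_T(x)|=0$.

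I expect the main obstacles to be twofold. Conceptually, the size-splitting of cavities is what makes the statement true at all: it genuinely relies on the perimeter penalty in $E_c$ (without it one can manufacture infinitely many vanishing cavities whose total mass survives in the limit), and it is the device that reduces everything to finitely many cavities per $\epsilon$. Technically, one has to be careful with the pervasive ``up to measure zero'' in the definitions of $A(f_{\rceil B})$ and $f_T(\cdot)$, which I would control by only ever intersecting over the countable family $\{2^{-m}\}_m$ and by invoking the subsequence-independence of the limiting cavity set (Theorem~\ref{t:cavities_same}); one also has to check that the accumulation points $x_j^*$ lie in $\Omega$ and not on $\partial\Omega$, which holds because a cavity of positive size cannot be opened at the boundary where $f_k=f_0$.
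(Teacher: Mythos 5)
Your proposal is correct and follows essentially the same route as the paper: the easy inclusion via $|f(B(x,r)\setminus N)|\to 0$ and $A(f_{\rceil B(x,r)})\subseteq A(f)$, and, for the converse, the perimeter bound coming from $E_c$ combined with the isoperimetric inequality to make the total mass of cavities smaller than $\eta$ negligible (of order $\eta^{1/n}$), followed by tracking the large cavities, the accumulation points of their cavity points, and the monotonicity of $A(f_{\rceil B(x,r)})$ in $r$ to conclude $L_j\subseteq f_T(x_j^*)$. The paper packages this slightly differently: it introduces the set $T_\varepsilon$ of points of $A(f)$ lying in cavities of measure $>\varepsilon$ along a subsequence, proves $|T_\varepsilon|\geq |A(f)|-C\varepsilon^{1/n}$, and argues per point $y_0\in T_\varepsilon$, rather than extracting measure-limits $L_j$ of the finitely many large cavities; the mathematical content is the same, and your bookkeeping with the sets $L_j$ is if anything a bit more careful about the ``up to measure zero'' issues. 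One caveat: your closing claim that the accumulation points $x_j^*$ must lie in $\Omega$ ``because a cavity of positive size cannot be opened at the boundary where $f_k=f_0$'' is asserted, not proved, and it is not obvious -- nothing in the class $\mathcal{H}^{1,p}_{c,f_0}$ prevents the cavity points of $f_k$ from drifting towards $\partial\Omega$ while the cavities keep measure $\geq\eta$. The paper does not exclude this scenario either: its proof allows $x_0\in\overline{\Omega}$ and relies on the earlier convention that $f_T$ (and hence, implicitly, $\operatorname{Cav}(f)$) is also defined at boundary points, so you should either adopt that convention explicitly or supply an actual argument ruling out boundary accumulation; as it stands this sentence is the only unsupported step in your proof.
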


\begin{proof}
For our $f$ we find $f_k\in \mathcal{H}^{1,p}_{c,f_0}$ with $f_k \rightharpoonup f$ in $W^{1,p}$ and pointwise a.e. and $\chi_{A(f_k)}\to \chi_{A(f)}$ weakly* in $BV$ and strongly in $L^1$. Let $x_0\in \operatorname{Cav}(f)$, i.e., a point where $f$ opens a cavity.

We know that 
$$
f_T(x_0)=\bigcap_{r>0} (f(B(x,r)\setminus N)\cap A(f_{\rceil B(x,r)})).
$$ 
Since $f$ satisfies the Lusin $(N)$ condition on $\Omega\setminus N$ (see the definition of the set $N$ in the proof of Theorem \ref{LSConHomThm}), we have that 
$$
|f(B(x,r)\setminus N)|\to 0\text{ as }r\to 0+.
$$ 
Therefore $f_T(x_0)\subseteq A(f)$  up to a set of measure zero. 

We now show the opposite inclusion.
To do so, we claim that given $\varepsilon>0$, the set 
$$
T_\varepsilon:=\bigl\{y\in A(f): \text{ there exists a subsequence } k_\ell \text{ such that } y\in K_{i(k_\ell)}^{k_\ell}, |K_{i(k_\ell)}^{k_\ell}|>\varepsilon\bigr\}
$$
is of size at least 
\eqn{size}
$$
|T_\varepsilon|\geq |A(f)|-C\varepsilon^{\tfrac{1}{n}}, \text{ where }C\text{ is independent of }\varepsilon.
$$

Assuming that, we know that for a.e. $y_0\in A(f)$ we can find $\varepsilon$ such that $y_0\in T_\varepsilon$. Therefore we have a sequence of sets $K_{i(k_\ell)}^{k_\ell}$ which are of size at least $\varepsilon$ and containing $y_0$. For each $f_{k_\ell}$, the cavity $K_{i(k_\ell)}^{k_\ell}$ corresponds to a point $x_{i(k_\ell)}^{k_\ell}\in \operatorname{Cav}(f_{k_\ell})$. This sequence of points has to have at least one accumulation point $x_0\in\overline{\Omega}$ (so without loss of generality we assume that $x_{i(k_\ell)}^{k_\ell}\to x_0$). 

For that $x_0$ we know that $f_T(x_0)\supseteq \bigcap_{r>0} A(f_{\rceil B(x_0,r)})$. For any $r>0$ we can find $k_{\ell_0}$ such that $x_{i(k_\ell)}^{k_\ell}\in B(x_0,r)$ for every $k_{\ell}\geq k_{\ell_0}$. Therefore 
$$
|A((f_{k_\ell})_{\rceil B(x_0,r)})|\geq \varepsilon,
$$
and so (from the convergence of sets) 
$$
|A(f_{\rceil B(x_0,r)})|\geq \varepsilon.
$$
Since for $r_1<r_2$ we have 
$$
A(f_{\rceil B(x_0,r_1)})\subseteq A(f_{\rceil B(x_0,r_2)})
$$
up to a set of measure zero, we can conclude that $|f_T(x_0)|\geq \varepsilon$ and therefore $x_0\in \operatorname{Cav}(f)$. At the same time, 
$$
y_0\in K_{i(k_\ell)}^{k_\ell}\subseteq A((f_{k_\ell})_{\rceil B(x_0,r)}),
$$
and so $y_0\in f_T(x_0)$.

It remains to prove \eqref{size}. Fix $\varepsilon>0$ and denote 
$$
\mathcal{K}^k_+ :=\{K_i^k \text{ cavities of } f_k: |K_i^k|\geq\varepsilon\}, A(f_k)_+:=\bigcup_{K \in \mathcal{K}^k_+} K,
$$
and
$$
\mathcal{K}^k_- :=\{K_i^k \text{ cavities of } f_k: |K_i^k|<2\varepsilon\}, A(f_k)_-:=\bigcup_{K \in \mathcal{K}^k_-} K.
$$
Since $A(f_k)=A(f_k)_+\cup A(f_k)_-$, we know that using ideas of Lemma \ref{lematko} we can assume that for a subsequence (that we still denote as $f_k$) we have
$$
A(f)=\left(\bigcap_{j=1}^{\infty} \bigcup_{k=j}^{\infty} A(f_k)_+\right) \cup \left(\bigcap_{j=1}^{\infty} \bigcup_{k=j}^{\infty} A(f_k)_-\right).
$$
Now using Theorem \ref{isoperimetric} and $\sup_k E_c(f_k)<\infty$ we know that
$$
|A(f_k)_-|=\sum_{K \in \mathcal{K}^k_-} |K| \leq C\varepsilon^\frac{1}{n} \sum_{K \in \mathcal{K}^k_-} |K|^\frac{n-1}{n}\leq C \varepsilon^\frac{1}{n}  \sum_{K \in \mathcal{K}^k_-} P(K, \mathbb{R}^n) \leq C_0\varepsilon^\frac{1}{n}.
$$ 
We know that $\chi_{A(f_k)}$ converge to $\chi_{A(f)}$ in $L^1$ and hence we can assume that for all $k$ we have $|A(f)\triangle A(f_k)|<C_0\varepsilon^\frac{1}{n}$. 
Therefore
$$
|A(f_k)_+|\geq |A(f_k)| - |A(f_k)_-|\geq |A(f)|- 2C_0\varepsilon^\frac{1}{n}.
$$
Since $T_\varepsilon = \bigcap \bigcup A(f_k)_+$, we obtain \eqref{size}.
\end{proof}

\section{Counterexample to the Lusin (N) condition}

In our proof of Theorem \ref{example} we use some ideas and results from \cite{BHM}. Unfortunately the two examples from \cite{BHM} cannot be used directly and we have to study other properties 
like the integrability of $\int \varphi(J_f)$ and properties of the distributional Jacobian that were not studied there. We include most of the details so that the reader can follow even though the  definition of various Cantor type sets and mappings between them is the same as in \cite{BHM}. 

Let us provide a short overview of the mapping we wish to construct. We start with a Cantor set of positive measure called $C_A$ and for each point $y\in C_A$ we find a curve $q_y$ (a ``tentacle'') with the following properties:
\begin{itemize}
	\item each $q_y$ is a Lipschitz curve of nonzero finite length,
	\item the curves $q_y$ are pairwise disjoint,
	\item it holds that $|\bigcup_{y\in C_A}q_y \setminus C_A| = 0$.
\end{itemize}
We construct a mapping $\tilde{f} \in W^{1,n-1}$ as the pointwise and $W^{1,n-1}$-limit of bi-Lipschitz mappings such that 
\begin{itemize}
	\item $\tilde{f}$ equals the identity on $C_A$,
	\item $\tilde{f}$ equals the identity everywhere outside a small neighborhood of $\bigcup_{y\in C_A}q_y$,
	\item $\tilde{f}(q_y) = \{y\}$ for each $y\in C_{A}$.
\end{itemize}
It then readily follows that our map  does not satisfy the Lusin $(N)$ condition as $\f\big(\bigcup_y q_y\setminus C_A\big) = C_A$ and $|C_A|>0$. With some work, and the help of Lemma~\ref{lemmaCDL}, we find a function $\varphi$ such that $E(\tilde{f})$ is finite for the corresponding energy functional $E$ from \eqref{EnergyDef}.

\subsection{Construction of Cantor sets and homeomorphisms between them}\label{ssec:CS}

Following \cite[Section 4.3]{HK} we consider a Cantor-set construction in $[-1,1]^n$.

Denote the cube with center at
$z$ and side length $2r$ by
$Q(z,r) = [z_1 - r, z_1+r] \times \dots \times [z_n - r, z_n+r] $.
Let
$\mV= \{-1,1\}^n$ be the set of
$2^n$
vertices of the cube
$[-1, 1]^n \subseteq \mR^n$
and
$\mV^k = \mV \times \cdots \times \mV$,
$k\in \en$.
Consider a sequence
$\{\alpha_k\}_{k=0}^{\infty}$
such that $\alpha_k\approx \alpha_{k+1}$
$$
1=\alpha_0\geq \alpha_1 \geq \dots >0,
$$
 and set
$$
r_k := 2^{-k} \alpha_k\text{ and }r'_k:=2^{-k}\alpha_{k-1}=r_{k-1}/2.
$$
Set $z_0=0$, then $Q(z_0,r_0) = (-1,1)^n$
and we proceed by induction.
For
$$\mv(k)=(v_1, \dots , v_k) \in \mV^k$$
we define 
\begin{equation*}
\begin{aligned}
    & z_{\mv(k)} = z_{(v_1, \dots , v_{k-1})} + \frac{1}{2} r_{k-1}v_k = 
    \frac{1}{2}\sum_{j=1}^{k} r_{j-1}v_j.
\end{aligned}
\end{equation*}
Around these centers we we place a smaller and a bigger cube
\begin{equation*}
\begin{aligned}
    & Q_{\mv(k)} = Q(z_{\mv(k)}, r_k) \quad \text{and} \quad  Q'_{\mv(k)} = Q(z_{\mv(k)}, r'_k)
\end{aligned}
\end{equation*}
(see Figure~\ref{pic:CantorSet}). Informally speaking, we always divide the cube $Q_{\mv(k)}$ into $2^n$ cubes of half the side length and then we inscribe a smaller cube into each of them. Here $v(j)$ can be understood as the direction from the center of the $(j-1)$-th cube in which we select the smaller cube in the $j$-th generation.

\begin{figure}[h]

\begin{tikzpicture}[scale=1]
\newcommand\Square[1]{+(-#1,-#1) rectangle +(#1,#1)}
 
  \foreach \x in {-1,1}
    \foreach \y in {-1, 1}
    \draw[fill=lightgray] (\x,\y) \Square{1}; 
 
   \foreach \x in {-1,1}
    \foreach \y in {-1, 1}
    \draw[fill= white] (\x,\y) \Square{0.8}; 

    \node[above] at (-1.25, -0.9) {$Q_{\mv(1)}$};

    \node[above] at (-2.5, -2) {$Q'_{\mv(1)}$};

  \foreach \x in {-1,1}
    \foreach \y in {-1, 1}
    \draw (\x+5.5,\y) \Square{1}; 
 
   \foreach \x in {-1.4,-0.6,0.6, 1.4}
    \foreach \y in {-1.4,-0.6,0.6, 1.4}
    \draw[fill=lightgray] (\x+5.5,\y) \Square{0.4};
    
   \foreach \x in {-1.4,-0.6,0.6, 1.4}
    \foreach \y in {-1.4,-0.6,0.6, 1.4}
    \draw[fill=white] (\x+5.5,\y) \Square{0.3};

\end{tikzpicture}
\caption{First two generations in the construction. The cubes $Q_{\mv(k)}$ are white, the gray ``frames'' are the cubical annuli $Q'_{\mv(k)}\setminus Q_{\mv(k)}$.}
\label{pic:CantorSet}
\end{figure}

For a sequence
$A=\{\alpha_k\}_{k=0}^{\infty}$
the resulting Cantor set
$$
    C_A := \bigcap_{k=1}^{\infty} \bigcup_{\mv(k)\in \mV^{k}} Q_{\mv(k)} =  \Big\{\frac{1}{2}\sum_{j=1}^{\infty} r_{j-1}v_j; v\in \mV^{\en} \Big\}
$$
is a product of $n$ Cantor sets $\cC_\alpha$ in $\mR$.

In the $k$-th generation of the construction, the number of cubes in
$\{Q_{\mv(k)}: \mv(k) \in \mV^k\}$
is
$2^{nk}$.
Hence,
\eqn{size_cantor}
$$
|C_A|= \lim_{k\to\infty} 2^{nk}(2 r_k)^n =\lim_{k\to\infty} 2^{nk}(2 \alpha_k 2^{-k})^n = \lim_{k\to\infty}2^n \alpha_k^n.
$$

Consider two sequences
$A=\{\alpha_k\}_{k=0}^{\infty}$
and
$B=\{\beta_k\}_{k=0}^{\infty}$ satisfying the conditions from above
and the two corresponding Cantor sets
$C_A$ and $C_B$
 designed as described.
We also denote 
\begin{equation*}
\begin{aligned}
    & \tilde{r}_k = 2^{-k}\beta_k, \quad \tilde{r}_k' = 2^{-k}\beta_{k-1},\\
    & \tilde{z}_{\mv(k)} = \tilde{z}_{[v_1, \dots , v_{k-1}]} + \frac{1}{2} \tilde{r}_{k-1}v_k =
    \frac{1}{2}\sum_{j=1}^{k} \tilde{r}_{j-1}v_j,\\
    & \tilde{Q}'_{\mv(k)} = Q(\tilde{z}_{\mv(k)}, \tilde{r}_{k}'), \quad  \tilde{Q}_{\mv(k)} = Q(\tilde{z}_{\mv(k)}, \tilde{r}_{k})
\end{aligned}
\end{equation*}
the corresponding notions for $B$ and $C_B$.

The measure of the $k$-th frame
$Q'_{\mv(k)} \setminus Q_{\mv(k)}$ can be estimated by
\eqn{measure}
$$
    |Q'_{\mv(k)} \setminus Q_{\mv(k)}| =  (2r_k')^n - (2r_k)^n = 2^{(-k+1)n} (\alpha_{k-1}^n - \alpha_k^n)
		\approx 2^{-nk}(\alpha_{k-1}-\alpha_k)\alpha_k^{n-1},
$$
since $\alpha_k\approx\alpha_{k+1}$.
In the $k$-th generation we have $2^{nk}$ such frames.

\begin{figure}[h t p]

\begin{tikzpicture}[scale=1]
\newcommand\Square[1]{+(-#1,-#1) rectangle +(#1,#1)}
  \foreach \x in {4}
    {\draw[fill= lightgray] (\x+1,0) \Square{1}; 
    \draw[fill= white] (\x+1,0) \Square{0.4}; 
    \draw[ultra thin,<->](\x+0.6,0)--(\x+1.4,0);
    \draw[ultra thin,<->](\x+1,-0.4)--(\x+1,0.4); 
    \draw[dotted](\x+0.6,-0.4)--(\x+0,-1);
    \draw[dotted](\x+0.6,0.4)--(\x+0,1);
    \draw[dotted](\x+1.4,-0.4)--(\x+2,-1);
    \draw[dotted](\x+1.4,0.4)--(\x+2,1);}
     \draw[ultra thin,<->](4,-1.2)--(6,-1.2);
     \node[below] at (4.5, -1.2) {$2\tilde{r}'_k$};
    
    \draw[->] (2.7,0)--(3.7,0);
    \node[above] at (3.2, 0) {$g$};
    
      \foreach \x in {-4}
    {\draw[fill= lightgray] (\x+5,0) \Square{1.5}; 
    \draw[fill= white] (\x+5,0) \Square{1.2}; 
    \draw[ultra thin,<->](\x+3.8,0)--(\x+6.2,0);
            \node[above] at (0.5, 0) {$2r_k$};
    \draw[ultra thin,<->](\x+5,-1.2)--(\x+5,1.2); 
    \draw[dotted](\x+3.8,-1.2)--(\x+3.5,-1.5);
    \draw[dotted](\x+3.8,1.2)--(\x+3.5,1.5);
    \draw[dotted](\x+6.2,-1.2)--(\x+6.5,-1.5);
    \draw[dotted](\x+6.2,1.2)--(\x+6.5,1.5);}
    \draw[ultra thin,<->](-0.5,-1.7)--(2.5,-1.7);
     \node[below] at (0, -1.7) {$2r'_k$};

\end{tikzpicture}
\caption{The mapping $g$ transforms $Q_{\mv}$ onto $\tilde{Q}_{\mv}$ (the white cube) and $Q'_{\mv}\setminus Q_{\mv}$ onto $\tilde{Q}'_{\mv}\setminus \tilde{Q}_{\mv}$ (the gray frame).}
\label{pic:g}
\end{figure}

There exists a homeomorphism
$g$
which maps $C_A$ onto $C_B$ (see Figure~\ref{pic:g}).
More precisely we define this $g$ as the uniform limit of bi-Lipschitz mappings $g_k$ which map the $k$-th generation of the Cantor set $C_A$ onto the $k$-th generation of $C_B$. That is,
\eqn{star}
$$
g_0:= \id, \quad
g_{k}(x):=
g_{k-1}(x)\text{ for }x\notin \bigcup_{\mv(k) \in \mV^k} Q'_{\mv(k)}
$$
and
$$
g_{k}\text{ maps }Q_{\mv(k)}\text{ onto }\tilde{Q}_{\mv(k)}\text{ linearly for } \mv(k) \in \mV^k.
$$
Note that $g$ is locally bi-Lipschitz outside of $C_A$.
Moreover, on $Q'_{\mv(k)} \setminus Q_{\mv(k)}$ we have, analogously to \cite[proof of Theorem 4.10]{HK},
\begin{equation}\label{eq:Dg}
    |Dg(x)| \approx
    \max \left\{\frac{\tilde{r}_k}{r_k}, \frac{\tilde{r}'_k - \tilde{r}_{k}}{r'_k - r_{k}}\right\} =
    \max \left\{\frac{\beta_k}{\alpha_k}, \frac{\beta_{k-1} -\beta_{k}}{\alpha_{k-1} - \alpha_{k}}\right\}
\end{equation}
and
$$
    J_g(x) \approx
    \frac{\tilde{r}'_k - \tilde{r}_{k}}{r'_k - r_{k}}\left(\frac{\tilde{r}_k}{r_k}\right)^{n-1} =
    \frac{\frac{\tilde{r}_{k-1}}{2} - \tilde{r}_{k}}{\frac{r_{k-1}}{2} - r_{k}}\left(\frac{\tilde{r}_k}{r_k}\right)^{n-1}=\frac{\beta_{k-1}-\beta_k}{\alpha_{k-1}-\alpha_k}\left(\frac{\beta_k}{\alpha_k}\right)^{n-1}.
$$
Likewise, for $y\in \tilde{Q}'_{\mv(k)} \setminus \tilde{Q}_{\mv(k)}$ we have
\eqn{eq:Dg2}
$$
|Dg^{-1}(y)|\approx\max \left\{\frac{\alpha_k}{\beta_k},
\frac{\alpha_{k-1} - \alpha_{k}}{\beta_{k-1} -\beta_{k}}\right\}
\text{ and }
J_{g^{-1}}(y) \approx
\frac{\frac{r_{k-1}}{2} - r_{k}}{\frac{\tilde{r}_{k-1}}{2} - \tilde{r}_{k}}
		\left(\frac{r_k}{\tilde{r}_k}\right)^{n-1}.
$$

\subsection{Construction of Cantor towers and a bi-Lipschitz mapping which maps a Cantor set onto a Cantor tower}\label{map_CT}

We build a Cantor tower as in \cite{GHT}.

Suppose $n \ge 2$ and denote by $\hat{\mathbb{V}}$ the set of points
\begin{align*}
\bigl(0,0, \ldots, 0, -1+\tfrac{2j-1}{2^{n}} \bigr)
\end{align*}
where $j=1,2, \ldots, 2^{n}$. The sets
\begin{align*}
\hat{\mathbb{V}}^{k} := \hat{\mathbb{V}} \times \cdots \times \hat{\mathbb{V}}, \quad k \in \mathbb{N},
\end{align*}
serve as sets of indices in the construction of a Cantor tower. Note that the cardinality of $\hat{\mathbb{V}}$ is the same as the cardinality of $\mathbb{V}$ and therefore there is a bijection $w$ between $\mathbb{V}$ and $\hat{\mathbb{V}}$. Therefore we can also define a bijection
$$
	w^k:\mathbb{V}^k \to\hat{\mathbb{V}}^k, \quad w^k(\mv(k)) = \big(w(v_1), w(v_2), \dots, w(v_k)\big)
$$
and
$$
	w^{\infty}:\mathbb{V}^{\en}\to\hat{\mathbb{V}}^{\en}, \quad w^\infty(\mv) = \big(w(v_1), w(v_2), \dots \big).
$$

Suppose that $B$ is a decreasing sequence as before with $1 = \beta_{0}$, $\beta_k\approx \beta_{k+1}$ and moreover assume that $\beta_{k}>2^n \beta_{k+1}$. Define
\eqn{defhatrk}
$$
\hat{r}_{k} := 2^{-k}\beta_{k}\text{ and }\hat{r}'_{k} := 2^{-k}\beta_{k-1}.
$$
Set $\hat{z}_{0} = 0$. Then it follows that $Q(\hat{z}_{0}, \hat{r}_{0}) = [-1,1]^{n}$ and we proceed further by induction. For $\hat{\ve}(k) := (\hat{v}_{1}, \hat{v}_{2}, \ldots, \hat{v}_{k}) \in \hat{\mathbb{V}}^{k}$ we define (see Figure~\ref{pic:tower})
\eqn{qqq}
$$
\begin{aligned}
&\hat{z}_{\hat{\ve}(k)} := \hat{z}_{(\hat{v}_{1}, \hat{v}_{2}, \ldots, \hat{v}_{k-1})} + \hat{r}_{k-1}\hat{v}_{k} = \sum_{j=1}^{k} \hat{r}_{j-1}\hat{v}_{j},\\
&\hat{Q}_{\hat{\ve}(k)}' := Q(\hat{z}_{\hat{\ve}(k)}, \hat{r}'_{k}) \text{ and } \hat{Q}_{\hat{\ve}(k)} := Q(\hat{z}_{\hat{\ve}(k)}, \hat{r}_{k})
\end{aligned}
$$

We then define the Cantor tower $C_B^T$ as
$$
    C_B^T := \bigcap_{k=1}^{\infty} \bigcup_{\hat{\ve}(k)\in \hat{\mathbb{V}}^k} \hat{Q}_{\hat{\ve}(k)}.
$$

\begin{figure}[ht]
\begin{center}
\begin{tikzpicture}[scale=0.50]

\draw (-4,-4)--(4,-4)--(4,4)--(-4,4)--(-4,-4);

\draw (-0.4,-3.4) rectangle (0.4,-2.6);

\draw (-0.4,-1.4) rectangle (0.4,-0.6);

\draw (-0.4,0.6) rectangle (0.4,1.4);

\draw (-0.4,2.6) rectangle (0.4,3.4);


\draw (8,-4)--(16,-4)--(16,4)--(8,4)--(8,-4);

\draw (11.6,-3.4) rectangle (12.4,-2.6);

\draw (11.8,-3.2) rectangle (12.2,-2.8);
\draw (11.99,-3.16) rectangle (12.01,-3.14);
\draw (11.99,-3.07) rectangle (12.01,-3.05);
\draw (11.99,-2.96) rectangle (12.01,-2.94);
\draw (11.99,-2.88) rectangle (12.01,-2.84);


\draw (11.6,-1.4) rectangle (12.4,-0.6);

\draw (11.8,-1.2) rectangle (12.2,-0.8);
\draw (11.99,-1.16) rectangle (12.01,-1.14);
\draw (11.99,-1.07) rectangle (12.01,-1.05);
\draw (11.99,-0.96) rectangle (12.01,-0.94);
\draw (11.99,-0.88) rectangle (12.01,-0.84);

\draw (11.6,0.6) rectangle (12.4,1.4);

\draw (11.8,0.8) rectangle (12.2,1.2);
\draw (11.99,0.84) rectangle (12.01,0.86);
\draw (11.99,0.94) rectangle (12.01,0.96);
\draw (11.99,1.04) rectangle (12.01,1.06);
\draw (11.99,1.14) rectangle (12.01,1.16);


\draw (11.6,2.6) rectangle (12.4,3.4);

\draw (11.8,2.8) rectangle (12.2,3.2);
\draw (11.99,2.84) rectangle (12.01,2.86);
\draw (11.99,2.94) rectangle (12.01,2.96);
\draw (11.99,3.04) rectangle (12.01,3.06);
\draw (11.99,3.14) rectangle (12.01,3.16);

\end{tikzpicture}
\end{center}
\caption{Cubes $\hat{Q}_{\hat{\ve}(k)}$ and $\hat{Q}'_{\hat{\ve}(k)}$ for $k=1$, $2$ in the construction of the Cantor tower.}
\label{pic:tower}
\end{figure}
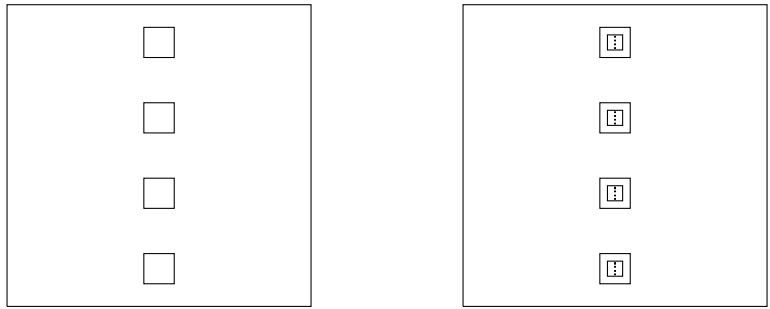

Let us now define the Cantor set $C_{B}$ as in subsection~\ref{ssec:CS} by choosing
\eqn{defbeta}
$$\beta_{k} := 2^{-k\beta},$$ 
where $\beta \geq n+1$. From \eqref{size_cantor} we know that $|C_B|=0$. Using this sequence we also fix the corresponding Cantor tower $C_{B}^{T}$.

The following theorem from \cite[Proposition 2.4]{GHT} gives us a bi-Lipschitz mapping $L \colon \rn \to \rn$ which maps the Cantor set $C_{B}$ onto the Cantor tower $C_{B}^{T}$.

\prt{Theorem}
\begin{proclaim}\label{Bilip}
Suppose that $C_{B}$ is the Cantor set and $C_{B}^{T}$ is the Cantor tower in $\rn$ defined by the sequence
\begin{align*}
\beta_{k}= 2^{-k\beta} \, ,
\end{align*}
where $\beta\geq n+1$. Then there is a bi-Lipschitz mapping $L \colon \rn \to \rn$ which takes $C_{B}$ onto $C_{B}^{T}$.
Moreover,
\eqn{goodmap}
$$
 L^{-1}(\hat{Q}_{\hat\ve(i)})=\tilde{Q}_{\ve(i)}\text{ exactly when }\hat{\ve}(i) = w^i(\ve(i)).
$$
\end{proclaim}

\subsection{Construction of tentacles and squeezing of tentacles}

Using the self-similarity \eqref{goodmap}, we define a correspondence between points $x\in C_B^T$ and $y\in C_A$ in \eqref{corespond}. Our aim in this section is to achieve the following. For each $x$ in a Cantor tower $C_B^T$, we define a curve $l_x$ and  construct an intermediary mapping which maps $l_x$ onto its corresponding point $y$ in $C_A$. 
We define this mapping as a composition of three mappings. First, we find a mapping $h$, which maps $l_x$ onto $x$ and maps $C_B^T$ onto itself.
Then, with the help of the bi-Lipschitz mapping $L^{-1}$ from Theorem~\ref{Bilip} we map $C^T_B$ onto $C_B$ and finally we map $C_B$ onto $C_A$ homeomorphically by $g^{-1}$ (the mapping from subsection~\ref{ssec:CS}),
i.e., the intermediary mapping in question is
$$
g^{-1}\circ L^{-1}\circ h \quad \text{and} \quad g^{-1}\circ L^{-1}\circ h(l_x) =\{ y\}.
$$

Further, we define the curve $q_y:=g^{-1}\big(L^{-1}(l_x)\big)$ and we define the mapping $\tilde{f}$ from Theorem~\ref{example} as $\tilde{f}= g^{-1}\circ L^{-1}\circ h\circ L\circ g$ (for an overview of the process see Figure~\ref{pic:the_mapping-1}). We show that
$$
|\bigcup_{y\in C_A}q_y\setminus C_A | =0
$$
 and $\tilde{f}$ maps $q_y$ onto $y$ for each $y\in C_A$ and therefore $\tilde{f}$ does not satisfy the Lusin $(N)$ condition (see Remark \ref{final_remark} for further discussion).

We start our exposition by defining the correspondence between points in $C_A$ and $C_B^T$. For each $x\in C_B^T$ there is exactly one $\hat{\ve}\in \hat{\mathbb{V}}^{\en}$ such that $\hat{z}_{\hat{\ve}} = x$, and for each $y\in C_A$ there is exactly one $\mv\in \mathbb{V}^{\en}$ such that $z_{\mv} = y$. Since $w^{\infty}$ is a bijection of $\mathbb{V}^{\en}$ and $\hat{\mathbb{V}}^{\en}$, we have that for each $x\in C_B^T$ there is exactly one $y\in C_A$ such that
\begin{equation}\label{corespond}
	x = L(g(y)) \text{ and this happens exactly when }w^{\infty}(\mv) = \hat{\ve}. 
\end{equation}

Now our aim is to define $l_x$ for each $x\in C_B^T$. We achieve this as follows; for each $\hat{Q}_{\hat{\ve}(k)}$ we define a tentacle $T_{\hat{\ve}(k)}$ (a long and thin polyhedron containing $\hat{Q}_{\hat{\ve}(k)}$) and we set
\begin{equation}\label{def:l}
    l_x:=\bigcap_{k=1}^{\infty}T_{\hat{\ve}(k)}.
\end{equation}
We define our tentacles by firstly defining tentative ``straight'' tentacles $T^S_{\hat{\ve}(k)}$ and then adjusting them so that
$$
T_{\hat{\ve}(k+1)}\subseteq T_{\hat{\ve}(k)}\text{ whenever }\hat{\ve}(k+1)\text{ is a continuation of }\hat{\ve}(k),
$$
(by which we mean that the first $k$ terms of $\hat{\ve}(k+1)$ are exactly $\hat{\ve}(k)$), see Figure~\ref{pic:two_gen}.

Recall the parameter $\beta$ from \eqref{defbeta}
and, from \eqref{defhatrk}, that $\hat{r}_k=2^{-k}\beta_k=2^{-k(\beta+1)}$. For $k\in \mathbb{N}$ we define
$$
    a_k  := 1 -\sum_{i=0}^{k} \hat{r}_{i+2} \approx 1,\quad
    c_k  := 1 -\sum_{i=0}^{k-1} \hat{r}_{i+2} \approx 1,
$$
and further we fix a pair of decreasing positive sequences  $b_k$ and $d_k$ such that $b_k,d_k \in(0,\hat{r}_k)$,
\eqn{choicebd}
$$
d_{k+1}< 4^n b_k \text{ and } b_k<d_k<a_k<c_k.
$$
We determine the exact values of $b_k$ and $d_k$ at a later point in the proof using Theorem \ref{thmH} below.

For $r>0$ and $\rho_1<\rho_2$ we define a parallelepiped
$$
P(r,\varrho_1, \varrho_2): = [\varrho_1,\varrho_2)\times(-r,r)\times \cdots \times  (-r,r).
$$
For each $k$ we also define
$$
P'^S_k:=P(d_k,\hat{r}_k,c_k)
\quad \text{and} \quad
P^S_k:=P(b_k,\hat{r}_k,a_k).
$$

Now we define ``straight'' tentacles as
$$
\begin{aligned}
T'^S_k:=Q(0,\hat{r}_k)\cup P'^S_k
& \quad \text{and} \quad
T^S_k:=Q(0,\hat{r}_k)\cup P^S_k, \\
T'^S_{\hat\ve(k)}:=\hat{z}_{\hat\ve(k)}+T'^S_k
& \quad \text{and} \quad
T^S_{\hat\ve(k)}:=\hat{z}_{\hat\ve(k)}+T^S_k.
\end{aligned}
$$
Both $T'^S_k$ and $T^S_k$ clearly contain $Q(0,\hat{r}_k)$ and note that $T^S_k\subseteq T'^S_k$ as $c_k>a_k$ and $d_k>b_k$.
Moreover, both $P^S_k$ and $P'^S_k$ have a common side with $Q(0,\hat{r}_k)$ and thus $T'^S_k$ is connected. Throughout the following, when we say ``length of a tentacle'' we mean the diameter of its projection onto $\er\times\{0\}^{n-1}$. The length of each tentacle $T^S_k$ is at least $a_k>1 - \frac{1}{1 -  2^{-\beta-1}}$ and hence
\begin{equation}\label{def:ls}
l^S:=\bigcap_{k=1}^{\infty}T^S_{k}\text{ is a segment of positive length}.
\end{equation}

\begin{figure}[h]
	\begin{center}
		\begin{tikzpicture}[line cap=round,line join=round,>=triangle 45,x=0.15cm,y=0.15cm,scale=0.8]
			\clip(0,-22) rectangle (140,22);
			\draw (20,-20)-- (20,20);
			\draw (20,20)-- (-20,20);
			\draw (-20,-20)-- (20,-20);
			\draw (3,12)-- (3,18);
			\draw (3,18)-- (-3,18);
			\draw (-3,12)-- (3,12);
			\draw (3,2)-- (3,8);
			\draw (3,8)-- (-3,8);
			\draw (-3,2)-- (3,2);
			\draw (3,-8)-- (3,-2);
			\draw (3,-2)-- (-3,-2);
			\draw (-3,-8)-- (3,-8);
			\draw (3,-18)-- (3,-12);
			\draw (3,-12)-- (-3,-12);
			\draw (-3,-18)-- (3,-18);
			
			\draw (20,8)-- (130,8);
			\draw (130,8)-- (130,-8);
			\draw (130,-8)-- (20,-8);
			\draw (3,16.5)-- (20,7);
			\draw (3,13.5)-- (20,4);
			\draw (20,7)-- (130,7);
			\draw (20,4)-- (130,4);
			\draw [dash pattern=on 2pt off 2pt] (3,16.5)-- (130,16.5);
			\draw [dash pattern=on 2pt off 2pt] (3,13.5)-- (130,13.5);
			\draw [dash pattern=on 2pt off 2pt] (130,16.5)-- (130,13.5);
			\draw [->,line width=0.5pt] (90,13) -- (90,7.5);
			\draw[color=black] (72,0) node {\scriptsize $T^S_{\hat{\ve}(k-1)}$};
			\draw[color=black] (133,5.5) node {\scriptsize $T'_{\hat{\ve}(k)}$};
			\draw[color=black] (133,15) node {\scriptsize $T'^S_{\hat{\ve}(k)}$};
			\draw[color=black] (92,11) node {\scriptsize $F$};
		\end{tikzpicture}
	\end{center}
	\caption{Two generations of tentacles.}
	\label{pic:two_gen}
\end{figure}
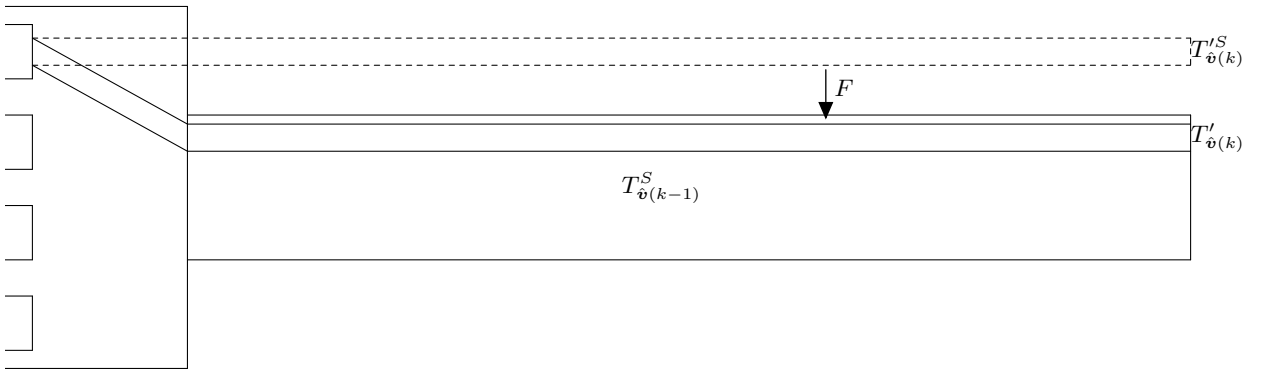

The ``real'' tentacles $T_{\hat{\ve}(k)}$ are formed from the tentative ``straight'' tentacles $T^S_{\hat{\ve}(k)}$ as depicted in Fig. \ref{pic:two_gen}. We want that
\eqn{subset}
$$
T'_{\hat{\ve}(k+1)}\subseteq T_{\hat{\ve}(k)}\text{ whenever }\hat{\ve}(k+1)\text{ is a continuation of }\hat{\ve}(k).
$$
A detailed description of the construction can be found in \cite[subsection 3.2]{BHM}. We only need to know that there is a bi-Lipschitz mapping $F$ between $T_{\hat{\ve}(k)}$ and $T^S_{\hat{\ve}(k)}$ which does not change the volume nor decrease the length of tentacles. In the following, we use the notation 
\eqn{znaceni}
$$
\begin{aligned}
T_{\hat{\ve}(k)}&=Q({\hat{z}}_{\hat{\ve}(k)},\hat{r}_k)\cup P_{\hat{\ve}(k)}=Q({\hat{z}}_{\hat{\ve}(k)},\hat{r}_k) \cup F(P^S_k + z_{\hat{\ve}(k)})\\
\text{ and }
T'_{\hat{\ve}(k)}&=Q({\hat{z}}_{\hat{\ve}(k)},\hat{r}_k)\cup P'_{\hat{\ve}(k)}=Q'({\hat{z}}_{\hat{\ve}(k)},\hat{r}_k) \cup F(P'^S_k + z_{\hat{\ve}(k)}),\\
\end{aligned}
$$ 
where $P_{\hat{\ve}(k)}$ and $P'_{\hat{\ve}(k)}$ are the corresponding $F$ images of $P_k^S$ and $P'^S_k$. 
We have that
\begin{equation}\label{meas1}
|P_{\hat{\ve}(k)}|=|P^S_k | \approx a_k \cdot (2b_k)^{n-1} \approx b_k^{n-1}\text{ and }
|P'_{\hat{\ve}(k)}|=|P'^S_k|\approx c_k \cdot (2d_k)^{n-1} \approx d_k^{n-1}.
\end{equation}

Now we define a map $h$ that ``squeezes'' the tentacles so that their length goes to zero (and in the limit their intersection $l_x$ is mapped onto $x\in C_B^T$).
Analogously as above we define parameters which describe the sizes of squeezed tentacles. We set
$$
\begin{aligned}
    \tilde{a}_k & = 2 \hat{r}_{k}\approx 2^{-k(\beta+1)},\\
    \tilde{c}_k & = \tilde{a}_{k-1} = 2 \hat{r}_{k-1} \approx 2^{-k(\beta+1)}.\\
\end{aligned}
$$
With these parameters we consider for each $k\in \en$
$$
\tilde{P}'^S_k:=P(d_k,\hat{r}_k,\tilde{c}_k)
\quad \text{and} \quad
\tilde{P}^S_k:=P(b_k,\hat{r}_k,\tilde{a}_k).
$$
Now the squeezed tentacles are defined by
$$
\begin{aligned}
\tilde{T}'^S_k:=Q(0,\hat{r}_k)\cup \tilde{P}'^S_k
& \quad \text{and} \quad
\tilde{T}^S_k:=Q(0,\hat{r}_k)\cup \tilde{P}^S_k, \\
\tilde{T}'^S_{\hat\ve(k)}:=\hat{z}_{\hat\ve(k)}+\tilde{T}'^S_k
& \quad \text{and} \quad
\tilde{T}^S_{\hat\ve(k)}:=\hat{z}_{\hat\ve(k)}+\tilde{T}^S_k.
\end{aligned}
$$
As before we adjust those tentacles so that we have an analogy of \eqref{subset} and we define the squeezed tentacles similarly to \eqref{znaceni} 
$$
\tilde{T}_{\hat{\ve}(k)}=Q({\hat{z}}_{\hat{\ve}(k)},\hat{r}_k)\cup \tilde{P}_{\hat{\ve}(k)}\text{ and }
\tilde{T}'_{\hat{\ve}(k)}=Q({\hat{z}}_{\hat{\ve}(k)},\hat{r}_k)\cup \tilde{P}'_{\hat{\ve}(k)}.
$$
Note that $\lim_{k\to\infty} \tilde{c}_k=\lim_{k\to\infty} \tilde{a}_k=0$ so the length of these tentacles is small and
$$
	\bigcap_{k=1}^{\infty}\tilde{T}'_{\hat{\ve}(k)} = \Big\{\sum_{j=1}^{\infty} \hat{r}_{j-1}\hat{v}_{j}\Big\} \text{ for each } \hat{\ve}\in \hat{\mathbb{V}}^{\en}.
$$
Therefore, in the limit we have $h(l_x) = \{x\}$ (see \eqref{def:l}).

We need the following result from \cite[Theorem 3.3]{BHM} which tells us that the map which squeezes the tentacles can be constructed in such a way that its derivative has arbitrarily small $L^{n-1}$-modulus.

\prt{Theorem}
\begin{proclaim}\label{thmH}
Let $n\geq 3$, $\delta_k>0$, $\beta\geq n+1$ and $k\in\en$. Then we can find small enough $d_k>b_k>0$ and a sequence of bi-Lipschitz mappings
 $h_k\colon Q(0,1)\to Q(0,1)$ such that $h_0(x)=x$ for every $x\in Q(0,1)$,
\begin{equation}\label{Similarity}
	h_k(x)=h_{k-1}(x)\text{ for }
x\in \rn \setminus \bigcup_{\hat{\ve}(k)\in \hat{\mathbb{V}}^{k}} P_{\hat{\ve}(k)}',
\end{equation}
\begin{equation}\label{Identitarianism}
h_k(x)=x\text{ for }x\in \hat{Q}_{\hat{\ve}(k)}
\end{equation}
and
\begin{equation}\label{Imaging}
	h_k(P_{\hat{\ve}(k)})=\tilde{P}_{\hat{\ve}(k)}.
\end{equation}
We define $M_k := \bigcup_{\hat{\ve}(k)\in\hat{\mathbb{V}}^{k}} P_{\hat{\ve}(k)}'$. We can estimate the integral of its derivative as
\eqn{defsmallhk}
$$
\int_{M_k} |Dh_k(x)|^{n-1}\; dx\leq \delta_k.
$$

Moreover, the pointwise limit $h$ of $h_k$ is continuous,
$J_{h}(x) > 0$ a.e., and
$$
h(l_x)=\{x\}\text{ for every }x\in C_B^T,
$$
where
$l_x$ is defined by \eqref{def:l}.
\end{proclaim}

Now we are ready to construct our counterexample. 

\begin{proof}[Proof of Theorem \ref{example}] 
We use the sequences
\eqn{chooseseq}
$$
\alpha_k = \frac{1}{2}\left(1+2^{-k\beta}\right)\text{ and }\beta_k = 2^{-k \beta}\text{ with }\beta\geq n+1
$$
to define Cantor type sets $C_A$, $C_B$ and $C_B^T$.
We set
\eqn{defdeltak2}
$$
\delta_k=\frac{2^{-k\beta (2n-1)}}{k^2}.
$$
Given this $\delta_k$ we find $d_k>b_k>0$ with 
\eqn{bksmall}
$$
b_k<8^{-k}
$$
by Theorem~\ref{thmH} so that we have \eqref{defsmallhk}.

We define the mapping $\tilde{f}$ as the pointwise limit of
$$
\f_k(y) = g_k^{-1} \circ L^{-1} \circ h_k \circ L \circ g_k(y)
$$
almost everywhere (see Fig.~\ref{pic:the_mapping-1}).
For $y\in C_A$ we know that $L \circ g(y)=x\in C_B^T$. Further, $h_k(x)=x$ for $x\in C_B^T$ and hence it is easy to see that the pointwise limit is 
the identity on $C_A$.
Therefore, we see at once that the pointwise limit of
$\f_k$ is well-defined as
$$
\f(y)=g^{-1} \circ L^{-1} \circ h \circ L \circ g(y)\text{ at every point } y \in Q(0,1).
$$
Since $g$ and $L$ are homeomorphisms and $h$ is continuous we obtain that $\f$ is continuous on $Q(0,1)$. 

Recall  the definition of $l_x$ from \eqref{def:l}. We claim that \eqref{bksmall}, \eqref{defhatrk} and \eqref{chooseseq} imply that the set
$$
\Upsilon:=\bigcup_{x\in C_B^T} l_x \quad \text{ satisfies } \quad |\Upsilon|\leq C\lim_{k\to\infty} 2^{kn}(\hat{r}_k^n+b_k^{n-1})=0
$$
as we have $2^{kn}$ tentacles in the $k$-th step of the construction and each tentacle contains a cube of sidelength $\hat{r}_k$ and the image under $F$ of a rectangle with $n-1$ sides of size $b_k$ (see \eqref{znaceni}). 
Since $g$ is locally bi-Lipschitz outside of $C_A$ and maps $C_A$ onto $C_B$, and since $L$ is bi-Lipschitz and maps $C_B$ onto $C_B^T$, we obtain that 
\begin{equation}\label{BOOM}
	g^{-1}(L^{-1}(\Upsilon))=C_A\cup \tilde{\Upsilon}\text{ where }|\tilde{\Upsilon}|=0. 
\end{equation}
Now $L(g(\tilde{\Upsilon}))=\Upsilon\setminus C_B^T$ and $h(l_x)=\{x\}$ so our $\tilde{f}=g^{-1} \circ L^{-1} \circ h \circ L \circ g$ maps $\tilde{\Upsilon}$ onto $C_A$. It follows that a set of zero measure $\tilde{\Upsilon}$ is mapped onto a set of positive measure $C_A$ and hence our $\tilde{f}$ fails the Lusin $(N)$ condition.

It is not difficult to see that $\f$ is locally bi-Lipschitz on
$[-1,1]^n\setminus (C_A\cup \tilde{\Upsilon})$ and hence we can use the composition formula for derivatives to obtain
$$
J_{\tilde{f}}(y)=J_{g^{-1}}J_{L^{-1}} J_{h}J_L J_g >0\text{ for a.e. }x\in[-1,1]^n\setminus C_A.
$$
For $y\in C_A$ we know that $f(y)=y$ and hence $J_{f}=1$ for a.e. $x\in C_A$ once we show that $f\in W^{1,1}$ since the weak derivative is equal to the approximative derivative a.e.

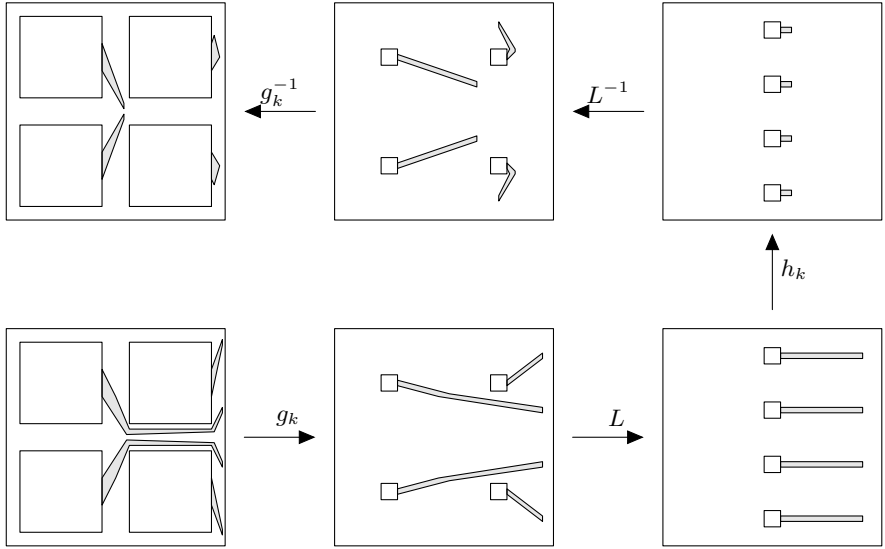
\begin{figure}[h]
\begin{center}
\begin{tikzpicture}[line cap=round,line join=round,>=triangle 45,x=0.36cm,y=0.36cm]
\clip(-26,-4) rectangle (8,18);

\draw (-25,17)-- (-25,9)-- (-17,9)-- (-17,17) -- cycle;
\draw (-24.5,13.5) -- (-24.5,16.5) -- (-21.5,16.5) -- (-21.5,13.5) -- cycle;
\draw (-21.5,15.5) -- (-20.7,13.3) -- (-20.7,13.1) -- (-21.5,14.5) -- cycle;
\fill[fill=black,fill opacity=0.1] (-21.5,15.5) -- (-20.7,13.3) -- (-20.7,13.1) -- (-21.5,14.5) -- cycle;
\draw (-20.5,13.5) -- (-20.5,16.5) -- (-17.5,16.5) -- (-17.5,13.5) -- cycle;
\draw (-17.5,14.5) -- (-17.2,15) --  (-17.4,15.8) -- (-17.5,15.5) -- cycle;
\fill[fill=black,fill opacity=0.1] (-17.5,14.5) -- (-17.2,15) -- (-17.4,15.8) -- (-17.5,15.5) -- cycle;
\draw (-20.5,9.5) -- (-20.5,12.5) -- (-17.5,12.5) -- (-17.5,9.5) -- cycle;
\draw (-17.5,11.5) -- (-17.2,11)  -- (-17.4,10.3) -- (-17.5,10.5) -- cycle;
\fill[fill=black,fill opacity=0.1] (-17.5,11.5) -- (-17.2,11)  -- (-17.4,10.3) -- (-17.5,10.5) -- cycle;
\draw (-24.5,9.5) -- (-24.5,12.5) -- (-21.5,12.5) -- (-21.5,9.5) -- cycle;
\draw (-21.5,10.5) -- (-20.7,12.7) -- (-20.7,12.9) -- (-21.5,11.5) -- cycle;
\fill[fill=black,fill opacity=0.1] (-21.5,10.5) -- (-20.7,12.7) -- (-20.7,12.9) -- (-21.5,11.5) -- cycle;

\draw (-13,17)-- (-13,9)-- (-5,9)-- (-5,17) -- cycle;
\draw (-11.3,15.3) -- (-11.3,14.7) -- (-10.7,14.7) -- (-10.7,15.3) -- cycle;
\draw (-10.7,14.9) -- (-7.8,13.9) -- (-7.8,14.1) -- (-10.7,15.1) -- cycle;
\fill[fill=black,fill opacity=0.1] (-10.7,14.9) -- (-7.8,13.9) -- (-7.8,14.1) -- (-10.7,15.1) -- cycle;
\draw (-6.7,15.3) -- (-6.7,14.7) -- (-7.3,14.7) -- (-7.3,15.3) -- cycle;
\draw (-6.7,15.1) -- (-6.6,15.3) -- (-7,16.1) --  (-7,16.3) -- (-6.4,15.3) -- (-6.4,15.2) -- (-6.7,14.9) -- cycle;
\fill[fill=black,fill opacity=0.1] (-6.7,15.1) -- (-6.6,15.3) -- (-7,16.1) --  (-7,16.3) -- (-6.4,15.3) -- (-6.4,15.2) -- (-6.7,14.9) -- cycle;
\draw (-6.7,10.7) -- (-6.7,11.3) -- (-7.3,11.3) -- (-7.3,10.7) -- cycle;
\draw (-6.7,10.9) -- (-6.6,10.7) -- (-7,9.9) -- (-7,9.7) -- (-6.4,10.7) -- (-6.4,10.8) -- (-6.7,11.1) -- cycle;
\fill[fill=black,fill opacity=0.1] (-6.7,10.9) -- (-6.6,10.7) -- (-7,9.9) -- (-7,9.7) -- (-6.4,10.7) -- (-6.4,10.8) -- (-6.7,11.1) -- cycle;
\draw (-11.3,10.7) -- (-11.3,11.3) -- (-10.7,11.3) -- (-10.7,10.7) -- cycle;
\draw (-10.7,11.1) -- (-7.8,12.1) -- (-7.8,11.9) -- (-10.7,10.9) -- cycle;
\fill[fill=black,fill opacity=0.1] (-10.7,11.1) -- (-7.8,12.1) -- (-7.8,11.9) -- (-10.7,10.9) -- cycle;

\draw (-1,17)-- (-1,9)-- (7,9)-- (7,17) -- cycle;
\draw (3.3,15.7) -- (2.7,15.7) -- (2.7,16.3) -- (3.3,16.3) -- cycle;
\draw (3.3,16.1) -- (3.7,16.1) -- (3.7,15.9) -- (3.3,15.9) -- cycle;
\fill[fill=black,fill opacity=0.1] (3.3,16.1) -- (3.7,16.1) -- (3.7,15.9) -- (3.3,15.9) -- cycle;
\draw (2.7,14.3) -- (3.3,14.3) -- (3.3,13.7) -- (2.7,13.7) -- cycle;
\draw (3.3,13.9) -- (3.7,13.9) -- (3.7,14.1) -- (3.3,14.1) -- cycle;
\fill[fill=black,fill opacity=0.1] (3.3,13.9) -- (3.7,13.9) -- (3.7,14.1) -- (3.3,14.1) -- cycle;
\draw (3.3,11.7) -- (2.7,11.7) -- (2.7,12.3) -- (3.3,12.3) -- cycle;
\draw (3.3,12.1) -- (3.7,12.1) -- (3.7,11.9) -- (3.3,11.9) -- cycle;
\fill[fill=black,fill opacity=0.1] (3.3,12.1) -- (3.7,12.1) -- (3.7,11.9) -- (3.3,11.9) -- cycle;
\draw (2.7,10.3) -- (3.3,10.3) -- (3.3,9.7) -- (2.7,9.7) -- cycle;
\draw (3.3,9.9) -- (3.7,9.9) -- (3.7,10.1) -- (3.3,10.1) -- cycle;
\fill[fill=black,fill opacity=0.1] (3.3,9.9) -- (3.7,9.9) -- (3.7,10.1) -- (3.3,10.1) -- cycle;

\draw (-1,-3) -- (-1,5) -- (7,5) -- (7,-3) -- cycle;
\draw (2.7,3.7)-- (3.3,3.7)-- (3.3,4.3) -- (2.7,4.3)-- cycle;
\draw (3.3,3.9)-- (6.3,3.9)-- (6.3,4.1)-- (3.3,4.1) -- cycle;
\fill[fill=black,fill opacity=0.1] (3.3,3.9)-- (6.3,3.9)-- (6.3,4.1)-- (3.3,4.1) -- cycle;
\draw (3.3,2.3) -- (2.7,2.3) -- (2.7,1.7) -- (3.3,1.7) -- cycle;
\draw (3.3,2.1) -- (6.3,2.1) -- (6.3,1.9) -- (3.3,1.9) -- cycle;
\fill[fill=black,fill opacity=0.1] (3.3,2.1) -- (6.3,2.1) -- (6.3,1.9) -- (3.3,1.9) -- cycle;
\draw (2.7,-0.3) -- (3.3,-0.3) -- (3.3,0.3) -- (2.7,0.3) -- cycle;
\draw (3.3,-0.1) -- (6.3,-0.1) -- (6.3,0.1) -- (3.3,0.1) -- cycle;
\fill[fill=black,fill opacity=0.1] (3.3,-0.1) -- (6.3,-0.1) -- (6.3,0.1) -- (3.3,0.1) -- cycle;
\draw (3.3,-1.7) -- (2.7,-1.7) -- (2.7,-2.3) -- (3.3,-2.3) -- cycle;
\draw (3.3,-1.9) -- (6.3,-1.9) -- (6.3,-2.1) -- (3.3,-2.1) -- cycle;
\fill[fill=black,fill opacity=0.1] (3.3,-1.9) -- (6.3,-1.9) -- (6.3,-2.1) -- (3.3,-2.1) -- cycle;

\draw (-13,-3) -- (-13,5) -- (-5,5) -- (-5,-3) -- cycle;
\draw (-11.3,3.3) -- (-11.3,2.7) -- (-10.7,2.7) -- (-10.7,3.3) -- cycle;
\draw (-10.7,3.1) -- (-8.8,2.6) -- (-5.4,2.1) -- (-5.4,1.9) -- (-9.2,2.5) -- (-10.7,2.9) -- cycle;
\fill[fill=black,fill opacity=0.1] (-10.7,3.1) -- (-8.8,2.6) -- (-5.4,2.1) -- (-5.4,1.9) -- (-9.2,2.5) -- (-10.7,2.9) -- cycle;
\draw (-6.7,3.3) -- (-6.7,2.7) -- (-7.3,2.7) -- (-7.3,3.3) -- cycle;
\draw (-6.7,3.1) -- (-5.4,4.1) -- (-5.4,3.9) -- (-6.7,2.9) -- cycle;
\fill[fill=black,fill opacity=0.1] (-6.7,3.1) -- (-5.4,4.1) -- (-5.4,3.9) -- (-6.7,2.9) -- cycle;
\draw (-6.7,-1.3) -- (-6.7,-0.7) -- (-7.3,-0.7) -- (-7.3,-1.3) -- cycle;
\draw (-6.7,-1.1) -- (-5.4,-2.1) -- (-5.4,-1.9) -- (-6.7,-0.9) -- cycle;
\fill[fill=black,fill opacity=0.1] (-6.7,-1.1) -- (-5.4,-2.1) -- (-5.4,-1.9) -- (-6.7,-0.9) -- cycle;
\draw (-11.3,-1.3) -- (-11.3,-0.7) -- (-10.7,-0.7) -- (-10.7,-1.3) -- cycle;
\draw (-10.7,-1.1) -- (-8.8,-0.6) -- (-5.4,-0.1) -- (-5.4,0.1) -- (-9.2,-0.5) -- (-10.7,-0.9) -- cycle;
\fill[fill=black,fill opacity=0.1] (-10.7,-1.1) -- (-8.8,-0.6) -- (-5.4,-0.1) -- (-5.4,0.1) -- (-9.2,-0.5) -- (-10.7,-0.9) -- cycle;

\draw (-25,-3) -- (-25,5) -- (-17,5) -- (-17,-3) -- cycle;
\draw (-24.5,4.5)-- (-24.5,1.5)-- (-21.5,1.5)-- (-21.5,4.5) -- cycle;
\draw (-21.5,3.5) -- (-21,2.5) -- (-20.5,1.3) -- (-17.5,1.3) -- (-17.1,2.1) -- (-17.1,1.9) -- (-17.4,1.2) -- (-20.6,1.1) -- (-21.5,2.5) -- cycle;
\fill[fill=black,fill opacity=0.1] (-21.5,3.5) -- (-21,2.5) -- (-20.5,1.3) -- (-17.5,1.3) -- (-17.1,2.1) -- (-17.1,1.9) -- (-17.4,1.2) -- (-20.6,1.1) -- (-21.5,2.5) -- cycle;
\draw (-20.5,4.5)-- (-20.5,1.5)-- (-17.5,1.5)-- (-17.5,4.5) -- cycle;
\draw (-17.5,3.5) -- (-17.1,4.6) -- (-17.1,4.4) -- (-17.5,2.5) -- cycle;
\fill[fill=black,fill opacity=0.1] (-17.5,3.5) -- (-17.1,4.6) -- (-17.1,4.4) -- (-17.5,2.5) -- cycle;
\draw (-20.5,0.5)-- (-20.5,-2.5)-- (-17.5,-2.5)-- (-17.5,0.5) -- cycle;
\draw (-17.5,-1.5) -- (-17.1,-2.6) -- (-17.1,-2.4) -- (-17.5,-0.5) -- cycle;
\fill[fill=black,fill opacity=0.1] (-17.5,-1.5) -- (-17.1,-2.6) -- (-17.1,-2.4) -- (-17.5,-0.5) -- cycle;
\draw (-24.5,0.5)-- (-24.5,-2.5)-- (-21.5,-2.5)-- (-21.5,0.5) -- cycle;
\draw (-21.5,-1.5) -- (-21,-0.5) -- (-20.5,0.7) -- (-17.5,0.7) -- (-17.1,-0.1) -- (-17.1,0.1) -- (-17.4,0.8) -- (-20.6,0.9) -- (-21.5,-0.5) -- cycle;
\fill[fill=black,fill opacity=0.1] (-21.5,-1.5) -- (-21,-0.5) -- (-20.5,0.7) -- (-17.5,0.7) -- (-17.1,-0.1) -- (-17.1,0.1) -- (-17.4,0.8) -- (-20.6,0.9) -- (-21.5,-0.5) -- cycle;

\draw [->] (-13.7,13) -- (-16.3,13);
\draw [->] (-1.7,13) -- (-4.3,13);
\draw [->] (-4.3,1) -- (-1.7,1);
\draw [->] (3,5.7) -- (3,8.5);
\draw [->] (-16.3,1) -- (-13.7,1);
\begin{scriptsize}
\draw[color=black] (-15,13.7) node {$g^{-1}_k$};
\draw[color=black] (-3,13.7) node {$L^{-1}$};
\draw[color=black] (3.8,7.2) node {$h_k$};
\draw[color=black] (-2.7,1.7) node {$L$};
\draw[color=black] (-14.7,1.7) node {$g_k$};
\end{scriptsize}
\end{tikzpicture}
\end{center}
\caption{Mapping $\f_k$ which converges pointwise to $\f$.}
\label{pic:the_mapping-1}
\end{figure}

To show that $\tilde{f}$ is Sobolev (and even in $W^{1,n-1}$) we show that $D\tilde{f}_k - D\tilde{f}_{k-1}$ is Cauchy in $L^{n-1}$. Having established this fact, the Poincare inequality for mappings with zero boundary values immediately gives that $\tilde{f}_k - \tilde{f}_{k-1}$ is also Cauchy in $L^{n-1}$. Let us recall that $M_k = \bigcup_{\hat{\ve}(k)\in\hat{\mathbb{V}}^{k}} P_{\hat{\ve}(k)}'$.
By Theorem \ref{thmH} we know that
$$
	h_k(x)=h_{k-1}(x) \text{ for all } x \in \rn \setminus M_k
$$
and, by \eqref{star},
$$
	g_k(y)=g_{k-1}(y) \text{ for all } y\in \rn \setminus \bigcup_{\ve(k-1)\in\mathbb{V}^{k-1}} Q_{\ve(k-1)}.
$$
In view of  \eqref{goodmap} it follows that
$$
\f(y) = \f_k(y)=\f_{k-1}(y)\text{ for all $y$ such that both}\begin{cases}
	L(g_k(y))\in \rn \setminus  M_k \ &\text{and }\\
	y\in \rn\setminus \bigcup_{{\ve}(k-1)\in{\mathbb{V}}^{k-1}} Q_{{\ve}(k-1)}.&
\end{cases} 
$$
Note that, by \eqref{Identitarianism}, for all $x\in \bigcup_{\hat{\ve}(k-1)\in\hat{\mV}^{k-1}} \hat{Q}_{\hat{\ve}(k-1)}$ we have 
$$
	h_{k-1}(x) = x
$$
Further, by \eqref{Similarity} and the above, for $x\in \bigcup_{\hat{\ve}(k-1)\in\hat{\mV}^{k-1}} \hat{Q}_{\hat{\ve}(k-1)}\setminus M_k$ we have
$$
h_{k}(x)=h_{k-1}(x)=x.
$$
In view of $g_k(Q_{\ve(k-1)})=g_{k-1}(Q_{\ve(k-1)})=\tilde{Q}_{\ve(k-1)}$  and
$L(\tilde{Q}_{\ve(k-1)})=\hat{Q}_{\hat{\ve}(k-1)}$
we obtain by Theorem \ref{thmH}  for $y\in Q_{\ve(k-1)}\setminus g_k^{-1}(L^{-1}(M_k))$ that 
$$
\f_{k}(y)= \f_{k-1}(y)=g_{k-1}^{-1} \circ L^{-1} \circ h \circ L \circ g_{k-1}(y)=y.
$$
Therefore
\eqn{prvni2}
$$
\begin{aligned}
\int_{Q(0,1)}|D\f_{k}-D\f_{k-1}|^{n-1}&=\int_{g_k^{-1}(L^{-1}(M_k))}|D\f_{k}-D\f_{k-1}|^{n-1}\\
&\leq C \int_{g_k^{-1}(L^{-1}(M_k))}|D\f_{k}|^{n-1}+
C\int_{g_k^{-1}(L^{-1}(M_{k}))}|D\f_{k-1}|^{n-1}. \\
\end{aligned}
$$

Note that $\f_k$ is locally bi-Lipschitz on $[-1,1]^n\setminus (C_A\cup \tilde{\Upsilon})$ (as a composition of locally bi-Lipschitz mappings) and hence we can compute its derivative a.e.\ by the standard chain rule. Note that $|Dg_k|\leq C$ by \eqref{eq:Dg} and \eqref{chooseseq}. With the help of $|DL|\leq C$ and $|DL^{-1}|\leq C$ we get
$$
\begin{aligned}
|D\f_k(y)|&\leq |Dg_k^{-1}|\cdot |DL^{-1}|\cdot |D h_k|\cdot |DL|\cdot |Dg_k|\\
&\leq C \bigl|Dg^{-1}_k(L^{-1}\circ h_k\circ L\circ g_k(y))\bigr|\cdot \bigl|D h_k(L\circ g_k(y)))\bigr|.\\
\end{aligned}
$$
By the change of variables and $J_{L}\approx C$ we have
\eqn{ahoj2}
$$
\begin{aligned}
\int_{g_k^{-1}(L^{-1}(M_k))} &|D \f_k(y)|^{n-1}\; dy \leq
C\int_{g_k^{-1}(L^{-1}(M_k))} |Dg_k^{-1}|^{n-1} |D h_k|^{n-1}\frac{J_{L}}{J_{L}}\cdot \frac{J_{g_k}}{J_{g_k}} \; \\
&\leq C\int_{M_k} |Dg_k^{-1}(L^{-1}\circ h_k(x))|^{n-1} |D h_k(x)|^{n-1}\frac{1}{J_{g_k}((L\circ g_k)^{-1}(x))}\; dx.\\
\end{aligned}
$$

Note that for every $x\in P_{\hat{\ve}(k)}'\subseteq M_k$ we know that $L^{-1}\circ h_k(x)$ lies outside of $\bigcup_{{\ve(k)}\in \hat{\mathbb{V}}^k} \tilde{Q}_{{\ve}(k)}$ and hence we can use \eqref{eq:Dg2} and \eqref{chooseseq} to estimate
$$
|Dg_k^{-1}(L^{-1}\circ h_k(x))|\leq
C\max_{i=1,\hdots,k} 2^{\beta i}=C 2^{\beta k}
$$
and
$$
\frac{1}{J_{g_k}((L\circ g_k)^{-1}(x))}\leq C 2^{\beta k n}.
$$
Now \eqref{defdeltak2} and \eqref{ahoj2} imply that
$$
\int_{g_k^{-1}(L^{-1}(\tilde{M}_k))} |D \f_k(y)|^{n-1}\; dy \leq
C2^{k\beta(2n-1)}\int_{\tilde{M}_k} |D h_k(x)|^{n-1}\; dx\leq \frac{C}{k^2}.
$$
A similar estimate holds also for $D\f_{k-1}$ and hence \eqref{prvni2} implies that
$$
\int_{Q(0,1)}|D \f_{k}-D \f_{k-1}|^{n-1}\leq \frac{C}{k^2}.
$$
Since $1/k^2$ is a convergent series, $\f_k$ form a Cauchy sequence in $W^{1,n-1}$ and hence $\f\in W^{1,n-1}$.

To prove equiintegrability of $J_{\f_k}$ we use Lemma \ref{lemmaCDL}. By our construction (see Fig. \ref{pic:the_mapping-1}) we obtain that our mapping is identity outside of 
$$
g^{-1}\Bigl(L^{-1}\Bigl(\bigcup_{k=1}^{\infty}M_k \Bigr)\Bigr)
$$
so there is no problem there. Moreover, by Theorem \ref{thmH} our mapping $h_k$ maps  
$$
M_k=\bigcup_{\hat{\ve}(k)\in\hat{\mathbb{V}}^{k}} P_{\hat{\ve}(k)}'\text{ onto }\bigcup_{\hat{\ve}(k)\in\hat{\mathbb{V}}^{k}} \tilde{P}_{\hat{\ve}(k)}',
$$
moreover by an induction of \eqref{Imaging} this holds also for the limit mapping $h$. Therefore, $\f$ maps $g^{-1}\Bigl(L^{-1}\Bigl(\bigcup_{k=1}^{\infty}M_k \Bigr)\Bigr)$ onto itself.
By \eqref{BOOM}, the set 
$$
\bigcap_{k_0=1}^{\infty}g^{-1}\Bigl(L^{-1}\Bigl(\bigcup_{k=k_0}^{\infty}M_k\Bigr)\Bigr)
$$ 
has zero measure. Therefore, given $\epsilon>0$, we find $k_0$ such that for
$$
E_{k_0}:=g^{-1}\Bigl(L^{-1}\Bigl(\bigcup_{k=k_0}^{\infty}M_k\Bigr)\Bigr)\text{ we have }|\tilde{f}(E_{k_0})|=|E_{k_0}|<\frac{\epsilon}{2}.
$$
Our $\f$ is bi-Lipschitz outside of $E_{k_0}$ and hence we can find $\delta>0$ so that each set $A\subseteq Q(0,1)\setminus E_{k_0}$ with $|A|<\delta$ satisfies $|\f(A)|<\tfrac{\epsilon}{2}$. It is not difficult to verify that $(i)$ of Lemma \ref{lemmaCDL} follows at least for every $k\geq k_0$ as $\f=\f_k$ outside of the set $E_{k_0}$. As the first mappings $\f_1,\hdots,\f_{k_0}$ are bi-Lipschitz, it is easy to see that $(i)$ of Lemma \ref{lemmaCDL} holds for the whole sequence (with possibly smaller $\delta$). The other property $(ii)$ is verified analogously, so \eqref{odkaz} follows by Lemma \ref{lemmaCDL}.

It remains to prove that the pointwise and distributional Jacobians are equal.
We know from \cite[Theorem 3.1 b)]{DHM} that $\f$ satisfies the $(INV)$ condition as a strong limit of $W^{1,n-1}$ homeomorphisms. As there are no cavities created by $\f$ we claim that we can use \cite[Theorem 4.2]{CDL} to conclude that $\mathcal{J}_{\f}=J_{\f}$. In fact this is stated there only for $n=3$ but the proof works analogously in any higher dimension. The only thing we need to verify is that $\operatorname{Per}(\operatorname{im}_G(\f,(-1,1)^n))<\infty$ (see \cite{CDL} for notation and the definition of the geometric image). 
The only problematic place is the image of the Cantor-type set $C_A$, but it belongs to the geometric image (up to a set of measure zero) as our mapping $\f$ is identity on $C_A$ and thus it has distributional derivative at a.e. point of $C_A$ and $J_{\f}=1$ there. Alternatively, we can use the result of \cite[proof of Lemma 4.2]{DHMo} where we show that $\mathcal{J}_{\f}=J_{\f}$ - to verify its assumptions it is enough to verify the assumptions of \cite[Theorem 1.2]{DHMo} and to show that 
\eqn{hhh}
$$
\sup_k \int_{\Omega} A(\adj D\f_{k})<\infty\text{ for some convex }A\text{ with }\lim_{t\to\infty} \frac{A(t)}{t}=\infty.
$$
In the proof of Theorem \ref{thmH} in \cite{BHM} we use the fact that points in $\er^{n-1}$ have zero capacity in 
$W^{1,n-1}$ which allowed us to construct $h_k$ with
$$
\sup_k \int_{\Omega} |Dh_k|^{n-1}<\infty.
$$
Similarly we could use the fact that points  in $\er^{n-1}$ have zero capacity in 
$WL^{n-1}\log L$ and to construct $h_k$ with
$$
\sup_k \int_{\Omega} |Dh_k|^{n-1}\log(e+|Dh_k|)<\infty,
$$
which would give us similar condition also for integrability of $|D\f_k|$ and \eqref{hhh} would follow. 
\end{proof}

\prt{Remark}
\begin{proclaim}\label{final_remark}
It is quite interesting that the mapping $\f$ from Theorem \ref{example} fails the Lusin $(N)$ condition in a very specific way. There is a null set $\tilde{\Upsilon}$ which is mapped to a set of positive measure $f(\tilde{\Upsilon})$
but from Corollary \ref{WeakJackOnClosure} we  know that the set $f(\tilde{\Upsilon})$ must be also an image of another set of positive measure. 
It means that ``the new material is not created from nothing'' in our theory. The new material comes from a set of positive measure but it seems that another null set is mapped to the same position. We could in principle choose a different representative that would be fine but choosing a different representative if we have a continuous one is nonstandard.  
\end{proclaim}

\prt{Remark}
\begin{proclaim}
There is an error in \cite[Lemma 4.1. (iii)]{DHMo} and the Lusin $(N)$ condition does not need to hold for the limit. Indeed, the error there is the use of \cite[Lemma 2.14]{DHM} on line -4 of page 23 there - we only know that $f_G(B)\subseteq f_T(B)$ (geometric image is a subset of topological image) and not that $f(B)\subseteq f_T(B)$.  

However, we still have there that $|f_k(B)|\to |f_T(B)|$ which is enough for us to conclude $J_{f_k}\to J_f$ in $L^1$ so other parts of the paper are correct. 
\end{proclaim}


\begin{thebibliography}{00}

\bibitem{AFP}
\by{\name{Ambrosio}{L.}, \name{Fusco}{N.} and \name{Pallara}{D.}}
\book{Functions of bounded variation and free discontinuity problems}
\publ{Oxford Mathematical Monographs.
The Clarendon Press, Oxford University Press, New York, 2000}
\endbook

\bibitem{Ball}
\by{\name{Ball}{J.}}
\paper{Convexity conditions and existence theorems in nonlinear elasticity}
\jour{Arch. Rat. Mech. Anal.}
\vol{63}
\pages{337-403}
\yr{1977}
\endpaper


 \bibitem{Ball2}
 \by{\name{Ball}{J.}}
 \paper{Global invertibility of Sobolev functions and the
 interpenetration of matter}
 \jour{Proc. Roy. Soc. Edinburgh Sect. A}
 \vol{88\nom 3--4}
 \pages{315--328}
 \yr{1981}
 \endpaper

 \bibitem{BM}
 \by{\name{Ball}{J.} and \name{Murat}{F.}}
 \paper{$W^{1,p}$-quasiconvexity and variational problems for multiple integrals}
 \jour{J. Funct. Anal.}
 \vol{58\nom 3}
 \pages{225--253}
 \yr{1984}
 \endpaper

\bibitem{BHMC}
 \by{\name{Barchiesi}{M.}, \name{Henao}{D.}, \name{Mora-Corral}{C.}}
 \paper{Local invertibility in Sobolev spaces with applications to nematic elastomers and magnetoelasticity}
 \jour{Arch. Ration. Mech. Anal.}
 \vol{224}
 \pages{743--816}
 \yr{2017}
 \endpaper

\bibitem{BHMCR}
 \by{\name{Barchiesi}{M.}, \name{Henao}{D.}, \name{Mora-Corral}{C.} and \name{Rodiac}{R.}}
 \paper{Harmonic dipoles and the relaxation of the neo-Hookean energy in 3D elasticity}
 \jour{Arch. Ration. Mech. Anal.}
 \vol{247}
 \pages{Paper No. 70}
 \yr{2023}
 \endpaper



 \bibitem{BHMCR2}
 \by{\name{Barchiesi}{M.}, \name{Henao}{D.}, \name{Mora-Corral}{C.} and \name{Rodiac}{R.}}
 \paper{On the lack of compactness problem in the axisymmetric neo-Hookean model}
 \jour{Forum Math. Sigma }
 \vol{12} 
 \yr{2024}
 \pages{Paper No. e26}
 \endpaper

 \bibitem{BHMCR3}
 \by{\name{Barchiesi}{M.}, \name{Henao}{D.}, \name{Mora-Corral}{C.} and \name{Rodiac}{R.}}
 \paper{A relaxation approach to the minimization of the neo-Hookean energy in 3D}
 \jour{SIAM J. Math. Anal.}
 \vol{56} 
 \yr{2024}
 \pages{7830--7845}
 \endpaper


 \bibitem{BHM}
 \by{\name{Bouchala}{O.}, \name{Hencl}{S.} and \name{Molchanova}{A.}}
 \paper{Injectivity almost everywhere for weak limits of Sobolev homeomorphisms}
 \jour{J. Funct. Anal.}
 \vol{279} 
 \yr{2020}
 \pages{article 108658}
 \endpaper
 
 
  \bibitem{BHZ}
 \by{\name{Bouchala}{O.}, \name{Hencl}{S.} and \name{Zhu}{Z.}}
 \paper{Weak limits of Sobolev homeomorphisms are one to one}
 \jour{to appear in Calc. Var. and PDE, arXiv:2409.01260}
 \endprep

 \bibitem{Ca}
 \by{\name{Campbell}{D.}}
 \paper{Point-wise characterizations of limits of planar Sobolev homeomorphisms and their quasi-monotonicity}
 \jour{arXiv:2401.10639}
 \endprep

 \bibitem{CM}
 \by{\name{Celada}{P.}, \name{Dal Maso}{G.}}
 \paper{Further remarks on the lower semicontinuity of polyconvex integrals}
 \jour{Ann. Inst. H. Poincaré C Anal. Non Linéaire}
 \vol{11}
 \pages{661--691}
 \yr{1994}
 \endpaper

 \bibitem{CN}
 \by{\name{Ciarlet}{P. G.}, \name{Ne\v{c}as}{J.}}
 \paper{Injectivity and self-contact in nonlinear elasticity}
 \jour{Arch. Ration. Mech. Anal.}
 \vol{97}
 \pages{171--188}
 \yr{1987}
 \endpaper

\bibitem{CDL}
\by{\name{Conti}{S.}, \name{De Lellis}{C.}}
\paper{Some remarks on the theory of elasticity for compressible
Neohookean materials}
\jour{ Ann. Sc. Norm. Super. Pisa Cl. Sci.}
\vol{2}
\pages{521--549}
\yr{2003}
\endpaper


 \bibitem{DMS}
 \by{\name{Dal Maso}{G.} and \name{Sbordone}{C.}}
 \paper{Weak lower semicontinuity of polyconvex integrals: a borderline case}
 \jour{Math. Z.}
 \vol{218}
 \pages{603--609}
 \yr{1995}
 \endpaper

\bibitem{DPP}
\by{\name{De Philippis}{G.} and \name{Pratelli}{A.}}
\paper{The closure of planar diffeomorphisms in Sobolev spaces}
\jour{Ann. Inst. H. Poincare Anal. Non Lineaire}
\vol{37}
\pages{181--224}
\yr{2020}
\endpaper
	
 \bibitem{DHM}
\by{\name{Dole\v{z}alov\'a}{A.}, \name{Hencl}{S.}, \name{Mal\'y}{J.}}
\paper{Weak limit of homeomorphisms in $W^{1,n-1}$ and $(INV)$ condition}
\jour{Arch. Rational Mech. Anal.}
\vol{247}
\pages{Article No. 80, 54 pp}
\yr{2023}
\endpaper

 \bibitem{DHMo}
\by{\name{Dole\v{z}alov\'a}{A.}, \name{Hencl}{S.}, \name{Molchanova}{A.}}
\paper{Weak limit of homeomorphisms in $W^{1,n-1}$: invertibility and lower semicontinuity of energy}
\jour{ESAIM: Control Optim. Calc. Var.}
\vol{30}
\pages{article 37}
\yr{2024}
\endpaper

\bibitem{DHMS}
\by{\name{D'Onofrio}{L.}, \name{Hencl}{S.}, \name{Mal\'y}{J.}, \name{Schiattarella}{R.}}
\paper{Note on Lusin $(N)$ condition and the distributional determinan}
\jour{J. Math. Anal. Appl. }
\vol{439}
\pages{171--182}
\yr{2016}
\endpaper

\bibitem{E}
\by{\name{Eisen}{G.}}
\paper{A selection lemma for sequences of measurable sets, and lower semicontinuity of multiple integrals}
\jour{Manuscripta Math.}
\vol{27}
\pages{73--79}
\yr{1979}
\endpaper

\bibitem{FG}
\by{\name{Fonseca}{I.} and \name{Gangbo}{W.}}
\book{Degree Theory in Analysis and Applications}
\publ{Clarendon Press, Oxford, 1995}
\endbook

\bibitem{GHT}
\by{\name{Guo}{C.Y.}, \name{Hencl}{S.} and \name{Tengvall}{V.}}
\paper{Mappings of finite distortion: size of the branch set}
\jour{Adv. Calc. Var}
\vol{13}
\pages{325--360}
\yr{2020}
\endpaper

\bibitem{Ha}
\by{\name{Hajlasz}{P.}}
\paper{Change of variables formula under minimal assumptions}
\jour{Colloq. Math.}
\vol{64}
\pages{93--101}
\yr{1993}
\endpaper

\bibitem{HMC}
\by{\name{Henao}{D.} and \name{Mora-Corral}{C.}}
\paper{Lusin's condition and the distributional determinant for deformations with finite energy}
\jour{Adv. Calc. Var.}
\vol{5}
\pages{355--409}
\yr{2012}
\endpaper

\bibitem{HeMo11}
\by{\name{Henao}{D.} and \name{Mora-Corral}{C.}}
\paper{Fracture surfaces and the regularity of inverses for {BV} deformations}
\jour{Arch. Rational Mech. Anal.}
\vol{201}
\pages{575--629}
\yr{2011}
\endpaper


\bibitem{HK}
  \by{\name{Hencl}{S.} and \name{Koskela}{P.}}
  \book{Lectures on Mappings of finite distortion}
  \publ{Lecture Notes in Mathematics 2096, Springer, 2014, 176pp}
  \endbook


\bibitem{HO}
\by{\name{Hencl}{S.}, \name{Onninen}{J.}}
\paper{Jacobian of weak limits of Sobolev homeomorphisms}
\jour{Adv. in Calc. Var.,  https://doi.org/10.1515/acv-2016-0005}
\vol{11. \rm{no. 1}}
\pages{65--73}
\yr{2018}
\endpaper

 \bibitem{M}
 \by{\name{Mal\'y}{J.}}
 \paper{Weak lower semicontinuity of polyconvex integrals}
 \jour{Proc. Roy. Soc. Edinburgh Sect. A}
 \vol {123, \rm no. 4}
 \yr {1993}
 \pages {681--691}
 \endpaper
 


\bibitem{MCMC}
\by{\name{Mora-Corral}{C.} and \name{Mur-Callizo}{D.}}
\paper{Invertibility of Sobolev maps through approximate invertibility at the boundary and tangential polyconvexity}
\jour{ArXiv:2503.01795}
\endprep




\bibitem{IM}
\by{\name{Iwaniec}{T.} and \name{Martin}{G.}}
\book{Geometric function theory and nonlinear analysis}
\publ{Oxford Mathematical Monographs, Clarendon Press, Oxford 2001}
\endbook

\bibitem{IO}
\by{\name{Iwaniec}{T.} and \name{Onninen}{J.}}
\paper{Monotone Sobolev Mappings of Planar Domains and Surfaces}
\jour{Arch. Ration. Mech. Anal.}
\vol{219}
\pages{159--181}
\yr{2016}
\endpaper

\bibitem{IO2}
\by{\name{Iwaniec}{T.} and \name{Onninen}{J.}}
\paper{Limits of Sobolev homeomorphisms}
\jour{J. Eur. Math. Soc.}
\vol{19}
\pages{473--505}
\yr{2017}
\endpaper



\bibitem{MS}
\by{\name{M\"uller}{S.} and \name{Spector}{S. J.}}
\paper{An existence theory for nonlinear elasticity that
allows for cavitation}
\jour{Arch. Ration. Mech. Anal.}
\vol {131, \rm no. 1}
\yr {1995}
\pages {1--66}
\endpaper


\bibitem{MST}
\by{\name{M\"uller}{S.}, \name{Spector}{S. J.} and \name{Tang}{Q.}}
\paper{Invertibility and a topological property of Sobolev maps}
\jour{SIAM J. Math. Anal.}
\vol{27}
\yr{1996}
\pages{959--976}
\endpaper

\bibitem{MTY}
\by {\name{M\"uller}{S.}, \name{Tang}{Q.} and \name{Yan}{B. S.}}
\paper{On a new class of elastic deformations not allowing for
cavitation}
\jour{Analyse Nonlineaire}
\vol{11}
\yr{1994}
\pages{217--243}
\endpaper

\bibitem{Re}
\by{\name{Reshetnyak}{Y.\,G.}}
\book{Space Mappings with Bounded Distortion}
\publ{Transl. Math. Monographs 73, AMS, Providence, Rhode Island 1989}
\endbook

\bibitem{Sv}
\by{\name{\v Sver\'ak}{V.}}
\paper{Regularity properties of deformations with finite energy}
\jour{Arch. Ration. Mech. Anal.}
\vol{100\nom 2}
\pages{105--127}
\yr{1988}
\endpaper


\end{thebibliography}
\end{document}